\renewcommand\labelenumi{\textup{\alph{enumi})}}
\renewcommand\theenumi\labelenumi
\def\@makefnmark{\hbox{(\@textsuperscript{\normalfont\@thefnmark})}}
\newtheorem{thm}{Theorem}
\newtheorem{defi}{Definition}
\newtheorem{lem}{Lemma}
\newtheorem{prop}{Proposition}
\newtheorem{exam}{Example}
\newtheorem{rem}{Remark}
\newcommand{\bone}{\mathbf{1}}
\newcommand{\N}{\mathbb{N}}
\newcommand{\R}{\mathbb{R}}
\newcommand{\C}{\mathbb{C}}
\newcommand{\T}{\mathbb{T}}
\newcommand{\bE}{{\bf E}}
\newcommand{\bP}{{\bf P}}
\newcommand{\m}{\mathfrak{m}}
\renewcommand{\d}{{\rm d}}
\providecommand{\ack}[1]{\par\addvspace\baselineskip
\noindent\ackname\enspace\ignorespaces#1}%
\def\subjclassname{\textup{2020} \textit{Mathematics Subject Classification:}}%
\providecommand{\subjclass}[1]{\par\addvspace\baselineskip
\noindent\subjclassname\enspace\ignorespaces#1}%
\begin{document}
\mainmatter
\title{\bfseries Equivalence of the strong Feller properties of analytic semigroups and associated resolvents}
\titlerunning{Equivalence of strong Feller properties}

\author{Seiichiro Kusuoka, Kazuhiro Kuwae and Kouhei Matsuura}
\authorrunning{S.~Kusuoka, K.~Kuwae and K.~Matsuura}

\tocauthor{David Berger, Franziska K\"uhn and Ren\'e L. Schilling}

\institute{Graduate School of Science, Kyoto University,\\
Kitashirakawa-Oiwakecho, Sakyo-ku, Kyoto 606-8502, Japan\\
\email{kusuoka@math.kyoto-u.ac.jp}\\
\quad
\\
Department of Applied Mathematics, Faculty of Science, Fukuoka University,\\
8-19-1 Nanakuma, Jonan-ku, Fukuoka 814-0180, Japan\\
\email{kuwae@fukuoka-u.ac.jp}\\
\quad
\\
Institute of Mathematics, University of Tsukuba,\\
1-1-1, Tennodai, Tsukuba, Ibaraki, 305-8571, Japan\\
\email{kmatsuura@math.tsukuba.ac.jp}  }

\maketitle

\begin{abstract} 
In this paper, we give sufficient conditions for the equivalence between semigroup strong Feller property and resolvent strong Feller property. 
\keywords{Feller property of semigroup, 
semigroup strong Feller property,
resolvent strong Feller property, analytic semigroup, Sobolev inequality, 
ultra contractivity of semigroup}
\subjclass{60J46, 60J45, 60J35, 31C25}
\ack{This work was supported by JSPS KAKENHI Grant number 17K14204, 17H02846, and 20K22299.}
\end{abstract}

\noindent

\section{Introduction}\label{sec:intro}

The notion of the Feller property  was initiated by William Feller~\cite{Feller1952}. In the paper, a pair of one-dimensional parabolic diffusion equations is exhaustively studied through the associated semigroups, where we can observe the origin of the present Feller property. In \cite{DynkinBook}, the Feller property is defined for  Markov processes on compact metric spaces, which states that the associated semigroups map the family of continuous functions on the state space into itself. Later, this notion was extended beyond the compactness of the state space. The semigroup $\{P_t\}_{t \ge 0}$ of a Markov process on a locally compact separable metric space $E$ is now said to have the Feller property if each $P_t$ leaves invariant the family $C_{\infty}(E)$ of continuous functions on $E$ vanishing at infinity.  It is known that any Feller semigroup generates  a Markov process on the state space with strong Markov property and c\`adl\`ag path, which is called the \emph{Feller process}. An important subclass is formed by the strong Feller processes initiated by Girsanov~\cite{GirSF}. They are generated by transition semigroups with the strong Feller property, i.e., each $P_t$ maps the family 
$\mathscr{B}_b(E)$ of bounded Borel functions on $E$ into the family $C_b(E)$ of 
bounded continuous functions on it. In fact, Markov processes other than Feller processes can have such property, and they are also called strong Feller processes. For example, it is known that non-degenerate diffusion processes on 
 Riemannian manifolds are strong Feller processes (see Molchanov~\cite{Molchanov}). 

The strong Feller property is one of fundamental notions for  Markov processes, and has been studied from various sources.
For example, the transition semigroups of Markov processes with strong Feller property often possess density functions (with respect to canonical measures), and some potential theoretic  aspects of the process are studied. One remarkable result is that the concepts of polar sets and semi-polar sets coincide with each other for such Markov processes associated with semi-Dirichlet forms. See \cite[\S4, Theorems~4.1.2 and 4.2.7]{FOT} and \cite[\S3.5, Theorem~3.5.4]{Oshima} for the precise statement. It is also important to point out that the strong Feller property is used to determine the uniqueness of invariant measures for Markov processes (see, e.g., \cite[Section~11.3.2]{DZ}). Hence, the strong Feller property plays a crucial role for the ergodic theory of Markov processes. 

Although the definition of the strong Feller property stated above is for semigroups, this is also defined for resolvents in a natural way (Definition~\ref{def:SF}). It is well-known that the semigroup strong Feller property (SF) implies the resolvent strong Feller property (RSF).
As this fact suggests, it is often easier to confirm (RSF) than (SF).
Then, the question of under what conditions (RSF) means (SF) naturally arises. This is not obvious. In fact, the uniform motion to the right satisfies (RSF), but not (SF). See \cite[Remark~1.1~(1)]{KKT} for details. This is in contrast to the fact that the Feller property for a resolvent kernel is equivalent to the Feller property for the associated transition semigroup kernel (see \cite[Section~1]{KKT}). However, the uniform motion to the right is a little extreme example of non-symmetric Markov processes. Therefore, under an appropriate framework, (RSF) can imply (SF). 

In the present paper, we study the question stated above, and provide several sufficient conditions for it. For a given Markov process, we consider a situation in which the semigroup is extended to an analytic semigroup on some $L^p$-space with $p \in [1,+\infty)$. Then, we utilize the theory of analytic semigroups to describe a general conditions that strengthen (RSF) to (SF), which is the first main result of this paper (Theorem~\ref{thm:equi}). We also show that the semigroup of the   uniform motion to the right is not extended to an analytic semigroup on any $L^p$-space with respect to the invariant measure. Theorem~\ref{thm:equi} can be applied to Hunt processes associated with lower bounded semi-Dirichlet forms (Theorem~\ref{thm:app}). As a result, we find that (SF) can be obtained mainly from the assumptions of (RSF) and a kind of ultracontractivity of the resolvent. Even if the ultracontractivity is replaced with the ultracontractivity of the semigroup, the same conclusion holds (Theorem~\ref{thm:bkk}). However, we think Theorem~\ref{thm:bkk} does not follow immediately from Theorem~\ref{thm:app}. These theorems may be restrictive in that the semigroups  of Ornstein-Uhlenbeck processes have the strong Feller property, but  does not satisfy the ultracontractivity.  Because of this fact, we introduce the notion of local ultracontractivity, and extend the Theorems (Theorem~\ref{thm:loc}). Here, it is also necessary to study (RSF) for part processes of a given Markov process.

The organization of this paper is as follows. In Section~\ref{sec:stoc}, we prepare 
several related notions, for example, the Feller, and strong Feller properties of 
transition semigroup and resolvent kernels,  
analyticity of strongly continuous semigroup on $L^p$-spaces, lower bounded semi-Dirichlet forms and so on. 
In Section~\ref{sec:equi}, we establish a general criterion from the strong Feller property of resolvents to that for semigroups in terms of the analyticity of the semigroup on $L^p$-spaces. 
In Section~\ref{sec:appSemiDir}, we apply the result in the framework of 
lower bounded semi-Dirichlet forms. We also provide some examples to clarify when our results are effective.

\vspace{0.3cm}

\noindent
{\it Notation.} The following symbols and conventions are used in the paper.
\begin{itemize}
\item For $p \in [1,+\infty]$ and a measure space $(E,\mathscr{E},\mu)$, we denote by $L^p(E;\,\mu)$ the $L^p$-space on it. 
For $f \in L^p(E;\,\mu)$, we set $\|f\|_{L^p(E;\,\mu)}=\{\int_{E}|f(x)|^p\,\mu(\d x)\}^{1/p}$.   For $p,q\in [1,+\infty]$ and a bounded linear operator $T$ from $L^p(E;\,\mu)$ to $L^q(E;\,\mu)$, we denote by $\|T\|_{L^p(E;\,\mu)\to L^q(E;\,\mu)}$ the operator norm.
If $(E,\mathscr{E},\mu)$ is clear from the context, we simply write $\|T\|_{p \to q}$ in stead of $\|T\|_{L^p(E;\,\mu)\to L^q(E;\,\mu)}.$

\item For a topological space $E$, we denote by $\mathscr{B}(E)$ the Borel $\sigma$-algebra on $E$. We set
\begin{align*}
C(E)&:=\{u\mid  u\text{ is a real valued continuous function on $E$} \},\\
C_{0}(E)&:=\{ u \in C(E) \mid \text{the closure of }u^{-1}(\R \setminus \{0\}) \text{ in $E$ is compact}\},\\
\mathscr{B}(E)&:=\{ u\mid  u\text{ is a Borel measurable $[-\infty,+\infty]$-valued function on $E$}\},\\
\mathscr{B}_{+}(E)&:=\{ u\in \mathscr{B}(E) \mid u \text{ is $[0,+\infty]$-valued}\},\\
\mathscr{B}_{b}(E)&:=\{ u\in \mathscr{B}(E) \mid \|u\|_{\infty}<\infty\}.
\end{align*}
Hereafter, $\|u\|_{\infty}:=\sup_{x \in E}|u(x)|$ for $u \colon E \to \R$. When $E$ is a locally compact separable metric space, we denote by $C_\infty (E)$ the completion of $C_{0}(E)$ under $\|\cdot\|_{\infty}$.
\item We denote by ${\rm i}:=\sqrt{-1}$ the imaginary unit. 
The real and imaginary parts of $z\in \mathbb{C}$ are denoted by $\text{\rm Re}\,z$ and $\text{\rm Im}\,z$, respectively.
\item We set $\inf \emptyset=\infty$.
\item For $a,b\in\R$, we write $a\lor b:=\max\{a,b\}$ and $a\land b:=\min\{a,b\}$. 
\end{itemize}

\section{Preliminaries}\label{sec:stoc}

Let $(E,d)$ be a locally compact separable metric space, and let $E_{\partial}=E\cup \{ \partial \}$ be the one-point compactification. 
Let ${\bf X}=(\{X_t\}_{t \in [0,+\infty]},\{\bP_x\}_{x \in E_{\partial}})$ be a 
Hunt 
process  on $E$. That is, ${\bf X}$ is a right continuous process on $E$ with strong Markov property and satisfies the right continuity of sample paths on $[0,+\infty)$ 
and the existence of left limits in $E_{\partial}$ of sample paths on $(0,+\infty)$ (see \cite[Definition~A.1.23]{CF}). 
Define the transition semigroup of ${\bf X}$  by
\begin{align*}
P_tf(x):=\bE_{x}[f(X_t)],\quad x \in E;\,\, t \ge 0,\,\,f \in \mathscr{B}_b(E), 
\end{align*}
where $\bE_x$ denotes the expectation under $\bP_x$. The resolvent of ${\bf X}$ is defined by 
\begin{align*}
R_{\alpha}f(x)=\int_{0}^\infty e^{-\alpha t}P_tf(x)\,\d t,\quad x \in E;\,\, f \in \mathscr{B}_b(E),\,\, \alpha>0.
\end{align*}

We first formulate the strong Feller property of semigroups and resolvents.
\begin{defi}\label{def:SF}
{\rm 
\begin{enumerate}
\item
The semigroup $\{P_t\}_{t \ge 0}$ is said to have the strong Feller property if for any $f \in \mathscr{B}_b(E)$ and $t>0$, $P_tf$ is bounded continuous on $E$.
\item The resolvent $\{R_\alpha\}_{\alpha>0}$ is said to have the strong Feller property if for any $f \in \mathscr{B}_b(E)$ and $\alpha>0$, $R_{\alpha}f$ is  bounded continuous on $E$.
\end{enumerate}
}
\end{defi}
It is easy to see that the semigroup strong Feller property implies the resolvent strong Feller property.  Next, we introduce the $C_b$-Feller property, which is a  weaker concept of the strong Feller property.

\begin{defi}\label{def:CBFeller}
{\rm 
\begin{enumerate}
\item
The semigroup $\{P_t\}_{t \ge 0}$ is said to have the \emph{$C_b$-Feller property} if for any $f \in C_b(E)$ and $t>0$, $P_tf$ is bounded continuous on $E$.
\item The resolvent $\{R_\alpha\}_{\alpha>0}$ is said to have the $C_b$-Feller property if for any $f \in C_b(E)$ and $\alpha>0$, $R_{\alpha}f$ is  bounded continuous on $E$.
\end{enumerate}
}
\end{defi}

The Feller properties of semigroups and resolvents are defined as follows.
\begin{defi}\label{def:Feller}
{\rm
\begin{enumerate}
\item
The semigroup $\{P_t\}_{t\ge 0}$ is said to have the \emph{Feller property} if for any $f \in C_{\infty}(E)$ and $t>0$, $P_tf$ belongs to $C_\infty(E)$, and $\lim_{t \to 0}\|P_tf-f\|_{\infty}=0.$
\item The resolvent $\{R_\alpha\}_{\alpha >0}$ is said to have the \emph{Feller property} if for any $f \in C_{\infty}(E)$ and $\alpha>0$, $R_\alpha f$ belongs to $C_\infty(E)$, and $\lim_{\alpha \to \infty}\|\alpha R_{\alpha}f-f\|_{\infty}=0.$
\end{enumerate}
}
\end{defi}
By the same argument as in \cite[Section~1]{KKT}, we see that $\{P_t\}_{t \ge 0}$ possesses the Feller property if and only if so does $\{R_{\alpha}\}_{\alpha >0}$. We refer the reader to \cite[Proposition~3.1]{Az} or \cite[Proposition~3.1]{T} for probabilistic characterizations of the Feller property under the (resolvent) 
strong Feller property. 

Hereafter, we consider the following two assumptions: 
\begin{itemize}
\item[{\bf (A1)}:] There exists a positive Radon measure $\m$ on $E$  with full support such that 
\begin{align}
\int_EP_tf(x)\,\m(\d x)\leq\int_Ef(x)\, \m(\d x),\quad t\geq0,\;f\in\mathscr{B}_+(E).\label{eq:exc}
\end{align}

\item [{\bf (A2)}:] There exists a lower bounded regular semi-Dirichlet form on $L^2(E;\,\m)$ associated with ${\bf X}$, where $\m$ is a positive Radon measure having full topological support. 
\end{itemize}

Let $(\mathscr{E},\mathscr{F})$ be a lower bounded semi-Dirichlet form on $L^2(E;\,\m)$. Here $(\mathscr{E},\mathscr{F})$ is said to be a \emph{lower bounded semi-Dirichlet form on $L^2(E;\,\m)$} if $\mathscr{F}$ is a dense linear subspace of $L^2(E;\,\m)$ and $\mathscr{E}:\mathscr{F}\times\mathscr{F}\to\R$ is a closed bilinear form in the following sense $(\mathscr{E}\!.\,1)$, $(\mathscr{E}\!.\,2)$ and $(\mathscr{E}\!.\,3)$, and satisfies the semi-Dirichlet property $(\mathscr{E}\!.\,4)$:  
\begin{itemize}
\item[$(\mathscr{E}\!.\,1)$:]
There exists a non-negative constant $\alpha_0$ such that 
\[
\mathscr{E}_{\alpha_0}(u,u):=\mathscr{E}(u,u)+\alpha_0(u,u)_{L^2(E;\,\m)}\geq0\quad\text{ for \; all }\quad u\in\mathscr{F}.
\]

\item[$(\mathscr{E}\!.\,2)$:]
$\mathscr{E}$ satisfies the \emph{{\rm(}strong\,{\rm)} sector condition}: there exists a constant 
$K\geq1$ such that 
\[
|\mathscr{E}(u,v)|\leq K\mathscr{E}_{\alpha_0}(u,u)^{1/2}\mathscr{E}_{\alpha_0}(v,v)^{1/2}\quad\text{ for \; all }\quad u,v\in\mathscr{F}, 
\]
where $\alpha_0$ is the non-negative constant specified in $(\mathscr{E}\!.\,1)$.

\item[$(\mathscr{E}\!.\,3)$:]
$\mathscr{F}$ is a Hilbert space relative to the inner product 
\[
\mathscr{E}_{\alpha}^{(s)}(u,v):=\frac12\left(\mathscr{E}_{\alpha}(u,v)+\mathscr{E}_{\alpha}(v,u) \right)\quad\text{ for \; all }\quad \alpha>\alpha_0,
\]
where $\alpha_0$ is the non-negative constant specified in $(\mathscr{E}\!.\,1)$.

\item[$(\mathscr{E}\!.\,4)$:]
for all $u\in\mathscr{F}$ and $a\geq0$, $u\land a\in\mathscr{F}$ and $\mathscr{E}(u\land a,u-u\land a)\geq0$.

\end{itemize}

Under $(\mathscr{E}\!.\,2)$, we can deduce the following \emph{{\rm(}weak\,{\rm)} sector condition}: 
For $\alpha>\alpha_0$, 
\begin{align}
|\mathscr{E}_{\alpha}(u,v)|\leq K\mathscr{E}_{\alpha}(u,u)^{1/2}\mathscr{E}_{\alpha}(v,v)^{1/2}\quad\text{ for \; all }\quad u,v\in\mathscr{F},\label{eq:WSC}
\end{align}
where $K(\geq1)$ is the constant appeared in $(\mathscr{E}\!.\,2)$. 
Remark that \eqref{eq:WSC} is a stronger form of the weak sector condition stated in 
\cite[\S~1.1, (1.1.3)]{Oshima}.  
\vspace{0.3cm}

Under $(\mathscr{E}\!.\,1)$, $(\mathscr{E}\!.\,2)$ and $(\mathscr{E}\!.\,3)$, we see from \cite[Chapter~I, Theorems~1.12 and 2.8]{MR} that
$(\mathscr{E},\mathscr{F})$ admits strongly continuous semigroups 
$\{T_t\}_{t\geq0}$ and $\{T_t^*\}_{t\geq0}$ on $L^2(E;\,\m)$ such that  $\|T_t\|_{L^2(E;\,\m)\to L^2(E;\,\m)}\leq e^{\alpha_0 t}$, $\|T_t^*\|_{L^2(E;\,\m)\to L^2(E;\,\m)}\leq e^{\alpha_0 t}$, 
\[
(T_tf,g)_{L^2(E;\,\m)}=(f,T_t^*g)_{L^2(E;\,\m)}.
\] 
Hereafter, $(\cdot,\cdot)_{L^2(E;\,\m)}$ denotes the $L^2$ inner product with respect to $\m$. That is, $\{T_t^*\}_{t \ge 0}$ is the dual semigroup of $\{T_t\}_{t \ge 0}.$
For $\alpha>\alpha_0$ and $f \in L^2(E;\,\m)$, we define 
$G_{\alpha}f=\int_0^{\infty}e^{-\alpha t}T_tf\d t$ and 
$G_{\alpha}^*f=\int_0^{\infty}e^{-\alpha t}T_t^*f\d t$,   the integrals being defined as the Bochner integral in $L^2(E;\,\m)$. It then follows from \cite[Chapter~I, Proposition~1.10 and Theorem~2.13]{MR} that
\[
\hspace{-1cm}\mathscr{E}_{\alpha}(G_{\alpha}f,u)=(f,u)_{L^2(E;\,\m)}=\mathscr{E}_{\alpha}(u,G_{\alpha}^*g),
\]
 for all $f\in L^2(E;\,\m)$, $u\in\mathscr{F}$, and $\alpha>\alpha_0$.
The resolvents $\{G_{\alpha}\}_{\alpha>\alpha_0}$ and $\{G_{\alpha}^*\}_{\alpha>\alpha_0}$ are strongly continuous on $L^2(E;\,\m)$ in the sense that $\lim_{\alpha\to\infty}\alpha G_{\alpha}f=\lim_{\alpha\to\infty}\alpha G_{\alpha}^*f=f$ in $L^2(E;\,\m)$. 

\vspace{0.3cm}

The condition $(\mathscr{E}\!.\,4)$ is equivalent to the following conditions $(\mathscr{E}\!.\,4c)$, 
 or $(\mathscr{E}\!.\,4d)$: 
\begin{itemize}
\item[$(\mathscr{E}\!.\,4c)$:]
$\{T_t\}_{t\geq0}$ is sub-Markov: If $f\in L^2(E;\,\m)$ satisfies $0\leq f\leq 1$ $\m$-a.e., then $0\leq T_tf\leq1$ $\m$-a.e.

\item[$(\mathscr{E}\!.\,4d)$:] $\{T_t^*\}_{t\geq0}$ is positivity preserving and contractive in $L^1(E;\,\m)$: If $f\in L^1(E;\,\m)$ satisfies $f\geq0$ $\m$-a.e., then $T_t^*f\geq0$ $\m$-a.e. and $\|T_t^*f\|_{L^1(E;\,\m)}\leq\|f\|_{L^1(E;\,\m)}$. 
\end{itemize}

Under $(\mathscr{E}\!.\,1)$, $(\mathscr{E}\!.\,2)$, $(\mathscr{E}\!.\,3)$ and $(\mathscr{E}\!.\,4)$, 
$\{T_t\}_{t\geq0}$ and $\{T_t^*\}_{t\geq0}$  are positivity preserving 
in the sense that 
for $f\in L^2(E;\,\m)$, $f\geq 0$ $\m$-a.e. implies  $T_tf\geq0$ $\m$-a.e. and  $T_t^*f\geq0$ $\m$-a.e. (see \cite[Remark~1.4~(i), (iii) and Theorem~1.5]{MRPosiPre}).  Under $(\mathscr{E}\!.\,4)$, 
$\{T_t\}_{t\geq0}$ (resp.~$\{G_{\alpha}\}_{\alpha>0}$) can be extended to a bounded linear operator on $L^{\infty}(E;\,\m)$ for $t>0$ (resp.~$\alpha>0$) (see \cite[p.~8]{Oshima}). 
As shown in \cite[p.~20]{Oshima}, for $t>0$, $\alpha>0$, and $f \in L^0_+(E;\,\m)$, we define 
$T_tf$ and $G_{\alpha}f$ (resp.~$T_t^*f$ and $G_{\alpha}^*f$) by
$T_tf=\lim_{n\to\infty}T_t(f\land nh)$ and $G_{\alpha}f=\lim_{n\to\infty}G_{\alpha}(f\land nh)$ (resp.~$T_t^*f=\lim_{n\to\infty}T_t^*(f\land nh)$ and $G_{\alpha}^*f=\lim_{n\to\infty}G_{\alpha}^*(f\land nh)$). Here, 
$L^0_+(E;\,\m)$ denotes the family of all non-negative $\m$-measurable functions and 
$h\in L^{\infty}(E;\,\m)\cap L^1(E;\,\m)$ is a strictly positive function. Then, we have the following generalized duality relation: for non-negative measurable functions $f,g$,  
\begin{equation*}
\int_ET_tf\,g\, \d\m=\int_Ef\,T_t^*g\,\d\m,\quad \int_EG_{\alpha}f\,g\, \d\m=\int_Ef\,G_{\alpha}^*g\,\d\m
\end{equation*}
for $t>0$ and $\alpha>0$. 

The lower bounded semi-Dirichlet form $(\mathscr{E},\mathscr
{F})$ on $L^2(E;\,\m)$ is said to be \emph{regular} if $\mathscr{F}\cap C_0(E)$ is $\mathscr{E}_{\alpha_0+1}^{1/2}$-dense in $\mathscr{F}$ and $\mathscr{F}\cap C_0(E)$ is 
uniformly dense in $C_0(E)$. Under the regularity of $(\mathscr{E},\mathscr{F})$, 
there exists a Hunt process ${\bf X}$ associated with $(\mathscr{E},\mathscr{F})$ in the sense that for $u\in L^{\infty}(E;\,\m)\cap \mathscr{B}(E)$, $G_{\alpha}u=R_{\alpha}u$ $\m$-a.e.~for each $\alpha>0$ (see \cite[\S3.3, Theorem~3.3.4]{Oshima}). 
 Moreover, we can prove that the semigroup $P_tu$ with $u\in L^2(E;\,\m)\cap\mathscr{B}(E)$ is a quasi-continuous $\m$-version of 
 $T_tu$ for $t>0$ by the same way of the proof of \cite[Chapter IV, Proposition~2.8]{MR} with the help of \cite[\S2.2, Theorem~2.2.5]{Oshima}.  
Then, we obtain the next proposition.
\begin{prop}\label{prop:L2semigroup}
\begin{enumerate}
\item Suppose that ${\bf (A1)}$ is satisfied. For any $p\in[1,+\infty)$, 
the semigroup $\{P_t\}_{t\geq0}$ of ${\bf X}$ is extended to a strongly continuous contraction semigroup $\{T_t\}_{t\geq0}$ on $L^p(E;\,\m)$. 
\item Suppose that ${\bf (A2)}$ is satisfied  
and let $-\alpha_0$ be the lower bound 
of $(\mathscr{E},\mathscr{F})$.  
Then, for any $p\in [2,+\infty)$, the semigroup 
$\{ e^{-\alpha_0(2/p)t}P_t\}_{t \ge 0}$ of ${\bf X}$ is extended to a strongly continuous contraction semigroup 
$\{ e^{-\alpha_0(2/p)t}T_t\}_{t \ge 0}$ on $L^p(E;\,\m)$. In particular, $\{T_t\}_{t\geq0}$ is strongly continuous on $L^p(E;\,\m)$. 
\end{enumerate}
\end{prop}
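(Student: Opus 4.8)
For part (a) I would reduce everything to contractivity on $L^1$ and $L^\infty$ followed by interpolation. First note that $\{P_t\}$ is positivity preserving and sub-Markovian, $0\le P_t\bone\le\bone$ (indeed $P_t\bone(x)=\bP_x(t<\zeta)$ for the lifetime $\zeta$ of ${\bf X}$), so that $|P_tf|\le P_t|f|$. Sub-invariance \eqref{eq:exc} then gives the $L^1$-contraction $\int_E|P_tf|\,\d\m\le\int_EP_t|f|\,\d\m\le\int_E|f|\,\d\m$, while $|P_tf|\le\|f\|_\infty P_t\bone\le\|f\|_\infty$ gives the $L^\infty$-contraction; Riesz--Thorin interpolation yields $\|P_t\|_{p\to p}\le1$ for every $p\in[1,+\infty]$. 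The $L^1$-bound shows that $f=0$ $\m$-a.e.\ implies $P_tf=0$ $\m$-a.e., so $P_t$ is well defined on $\m$-equivalence classes and extends, from the dense subspace $C_0(E)\subset\mathscr{B}_b(E)\cap L^p(E;\,\m)$, to a contraction $T_t$ on $L^p(E;\,\m)$ for $p\in[1,+\infty)$; the semigroup identity passes from the Chapman--Kolmogorov equation to $\{T_t\}$ by density.

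The genuinely delicate point in (a) is strong continuity, which I would establish on the dense set $C_0(E)$ and then transfer by a $3\varepsilon$-argument. For $f\in C_0(E)$, right continuity of the paths of ${\bf X}$ and bounded convergence give $P_tf(x)=\bE_x[f(X_t)]\to f(x)$ for every $x$, hence $\m$-a.e., as $t\downarrow0$. Along any sequence $t_n\downarrow0$, Fatou's lemma yields $\|f\|_{L^p(E;\,\m)}^p\le\liminf_n\|P_{t_n}f\|_{L^p(E;\,\m)}^p$, while contractivity gives $\limsup_n\|P_{t_n}f\|_{L^p(E;\,\m)}^p\le\|f\|_{L^p(E;\,\m)}^p$, so $\|P_{t_n}f\|_{L^p(E;\,\m)}\to\|f\|_{L^p(E;\,\m)}$. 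Combining $\m$-a.e.\ convergence with convergence of the norms, the Riesz lemma (norm convergence from a.e.\ convergence plus convergence of $L^p$-norms, valid for $1\le p<\infty$) forces $\|P_{t_n}f-f\|_{L^p(E;\,\m)}\to0$; since the sequence was arbitrary, $\|T_tf-f\|_{L^p(E;\,\m)}\to0$. Density of $C_0(E)$ and the uniform bound $\|T_t\|_{p\to p}\le1$ then upgrade this to all of $L^p(E;\,\m)$.

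For part (b) I would use the fact recalled before the statement that $P_tu$ is an $\m$-version of $T_tu$, so the $L^p$-extension of $\{P_t\}$ is exactly the form semigroup $\{T_t\}$. The sub-Markov property $(\mathscr{E}\!.\,4c)$ together with positivity preservation gives $|T_tf|\le T_t|f|\le\|f\|_\infty T_t\bone\le\|f\|_\infty$, hence $\|T_t\|_{\infty\to\infty}\le1$, while the form theory supplies $\|T_t\|_{2\to2}\le e^{\alpha_0 t}$. Interpolating these two bounds by Riesz--Thorin with weight $2/p$ on the $L^2$-endpoint gives $\|T_t\|_{p\to p}\le(e^{\alpha_0 t})^{2/p}=e^{\alpha_0(2/p)t}$ for $p\in[2,+\infty]$, i.e.\ $\{e^{-\alpha_0(2/p)t}T_t\}$ is a contraction semigroup on $L^p(E;\,\m)$. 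For strong continuity I would apply the elementary inequality $\|g\|_{L^p(E;\,\m)}\le\|g\|_{L^2(E;\,\m)}^{2/p}\|g\|_\infty^{1-2/p}$ to $g=T_tf-f$ with $f\in C_0(E)$: the $L^2$-factor tends to $0$ by strong continuity of $\{T_t\}$ on $L^2(E;\,\m)$ and the $L^\infty$-factor is bounded by $2\|f\|_\infty$, so $\|T_tf-f\|_{L^p(E;\,\m)}\to0$; density and the uniform bound finish as before. Since $t\mapsto e^{\alpha_0(2/p)t}$ is continuous and equals $1$ at $t=0$, multiplying back shows that $\{T_t\}$ itself is strongly continuous on $L^p(E;\,\m)$.

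The main obstacle is the strong continuity in part (a): since $\m$ may be infinite, $|P_tf-f|^p$ need not be dominated by a fixed integrable function, so dominated convergence is unavailable and one must instead combine $\m$-a.e.\ convergence with convergence of the $L^p$-norms via the Riesz lemma. In part (b) this difficulty is bypassed by interpolating the already-available $L^2$-continuity against a uniform $L^\infty$-bound, which is also the structural reason the range there is restricted to $p\ge2$: unlike under ${\bf (A1)}$, sub-invariance of $\m$ passes to the \emph{dual} semigroup rather than to $\{T_t\}$ itself, so no $L^1$-bound for $\{T_t\}$ is at hand.
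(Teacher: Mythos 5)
Your proof is correct; in part a) it follows the paper's skeleton, while in part b) it takes a genuinely different route. In a) the paper obtains the $L^p$-contraction on $C_0(E)$ in one stroke from Jensen's inequality, $|P_tf|^p\le P_t(|f|^p)$, combined with the sub-invariance \eqref{eq:exc}, whereas you interpolate between the $L^1$- and $L^\infty$-endpoints; both are fine, and your argument via Fatou's lemma and the Riesz lemma (a.e.\ convergence plus convergence of $L^p$-norms) for $\|P_{t_n}g-g\|_{L^p(E;\,\m)}\to0$, $g\in C_0(E)$, supplies precisely the detail that the paper leaves implicit when it asserts this convergence from sample path right-continuity alone. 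In b) the paper dualizes: by $(\mathscr{E}\!.\,4d)$ the dual semigroup $\{T_t^*\}_{t\ge0}$ is $L^1(E;\,\m)$-contractive, Riesz--Thorin applied on the dual side shows that $\{e^{-\alpha_0(2(q-1)/q)t}T_t^*\}_{t\ge0}$ is $L^q(E;\,\m)$-contractive for $q\in[1,2]$, and the bound for $P_t$ on $L^p$, $p\ge2$, is then recovered through the pairing $\|P_tf\|_{L^p(G;\,\m)}=\sup\bigl\{\,\bigl|\int_G P_tf\,g\,\d\m\bigr|\;:\;\|g\|_{L^q(G;\,\m)}=1\bigr\}$ over relatively compact open sets $G$. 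You instead interpolate the primal semigroup directly between the $L^2$-bound $e^{\alpha_0 t}$ and the $L^\infty$-bound $1$ furnished by the sub-Markov property $(\mathscr{E}\!.\,4c)$; since $(\mathscr{E}\!.\,4c)$ and $(\mathscr{E}\!.\,4d)$ are the primal and dual expressions of the same semi-Dirichlet property, the two arguments are dual to one another, but yours is shorter, stays on one side of the duality, and moreover yields a cleaner proof of strong continuity, via $\|g\|_{L^p(E;\,\m)}\le\|g\|_{L^2(E;\,\m)}^{2/p}\|g\|_{L^{\infty}(E;\,\m)}^{1-2/p}$ and the known $L^2$-strong continuity, where the paper simply repeats the approximation scheme of a). What the paper's formulation buys is that every operator acts on genuine $L^2$-functions over sets of finite measure, so the extension of $T_t$ beyond $L^2$ is never invoked; on that note, your step $T_t|f|\le\|f\|_\infty T_t\bone_E$ tacitly uses such an extension (note $\bone_E\notin L^2(E;\,\m)$ when $\m(E)=\infty$), and it is tidier to quote $(\mathscr{E}\!.\,4c)$ directly: $0\le f\le\|f\|_\infty$ $\m$-a.e.\ implies $0\le T_tf\le\|f\|_\infty$ $\m$-a.e., which is all the interpolation needs.
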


\begin{proof}
We first prove a). Suppose that {\bf (A1)} is satisfied.
Jensen's inequality and \eqref{eq:exc}  imply that for $f \in C_0 (E)$ and $t \in [0,+\infty )$
\begin{align}
&\| P_t f \| _{L^p(E;\,\m)}^p= \int _E | \bE_x[ f(X_t)] |^p \,\m(\d x) \le \int _E \bE_x[ |f(X_t)|^p ] \,\m(\d x) \notag \\
&\hspace{0.5cm}\le \int _E P_t (|f|^p) (x) \,\m (\d x) \le\int _E |f(x)|^p \,\m (\d x) \le \| f\|_{L^p(E;\,\m)}^p .\label{eq:contr}
\end{align}
Note that  $C_0 (E)$ is dense in $L^p(E;\,\m)$. Then, \eqref{eq:contr} implies that $\{ P_t\}_{t \ge 0}$ is extended to a contraction semigroup on $L^p(E;\,\m)$, which is denoted by $\{T_t\}_{t \ge 0}$.

Let $f \in L^p(E;\,\m)$ given. Then, for any $\varepsilon >0$, there exists $g\in C_0 (E)$ such that $
\| f-g\| _{L^p(E;\,\m)}< \varepsilon/2.$ By using \eqref{eq:contr}, we obtain that for any $t>0$,
\begin{align}
\| T_t f -f \| _{L^p(E;\,\m)}&\le  \| T_t f - T_t g \| _{L^p(E;\,\m)} + \| T_t g -g \| _{L^p(E;\,\m)} + \| g-f \| _{L^p(E;\,\m)} \notag \\
&\le \varepsilon+\| T_t g -g \| _{L^p(E;\,\m)}. \label{eq:s}
\end{align}
 It is easy to see that 
 for any $f\in L^p(E;\,\m) \cap \mathscr{B}_b(E)$, $T_t f(x) = P_t f(x)$ for $\m$-a.e. $x \in E$.
Then, by using the sample path (right-)continuity of ${\bf X}$, we have
\[
\varlimsup _{t \downarrow 0} \| T_t g -g \| _{L^p(E;\,\m)}=\varlimsup _{t \downarrow 0} \| P_t g -g \| _{L^p(E;\,\m)}=0.
\]
Therefore, \eqref{eq:s} implies  $\varlimsup _{t \downarrow 0}\| T_t f -f \| _{L^p(E;\,\m)}\le \varepsilon.$ Since $\varepsilon>0$ is arbitrarily chosen, we complete the proof.

Next we prove b). 
Suppose {\bf (A2)} is satisfied. Since $\{T_t^*\}_{t\geq0}$ is $L^1(E;\,\m)$-contractive and 
$\{e^{-\alpha_0t}T_t^*\}_{t\geq0}$ is $L^2(E;\,\m)$-contractive, 
we get $\{e^{-\alpha_0(2(q-1)/q)t}T_t^*\}_{t\geq0}$ is 
$L^q(E;\,\m)$-contractive for $q\in[1,2]$ in view of the Riesz-Thorin interpolation 
theorem (see \cite[1.1.5]{Da0}).  
Take a relatively compact open  set $G$. For $f\in C_0(E)$ and $p\in[2,+\infty)$, we see $P_tf\in L^p(G;\,\m)$:
\begin{align}
\|P_tf\|_{L^p(G;\,\m)}^p&\leq\int_GP_t|f|^p\,\d\m=\int_ET_t^*\bone_G(x)|f(x)|^p\,\m(\d x)\notag\\
&\leq\|f\|_{\infty}^p\int_ET_t^*\bone_G(x)\,\m(\d x)=\|f\|_{\infty}^p\int_ET_t\bone_E(x)\bone_G(x)\,\m(\d x)\label{eq:contr*}
\\&
\leq
\|f\|_{\infty}^p\m(G)<\infty.\notag
\end{align}
Let $q=p/(p-1) \in (1,2)$.  Since $\{e^{-\alpha_0(2(q-1)/q)t}T_t^*\}_{t \ge 0}$ is $L^q(E;\,\m)$-contractive and  $\{T_t^\ast\}_{t \ge 0}$ is positivity preserving, we obtain that for any $t>0$,
\begin{align}
\int_G|P_tf(x)|^p\,\m(\d x)&=\sup_{g\in L^q(G;\,\m),\|g\|_{L^q(G;\,\m)}=1}\left|\int_GP_tf(x)g(x)\,\m(\d x) \right|\notag \\
&=\sup_{g\in L^q(G;\,\m),\|g\|_{L^q(G;\,\m)}=1}\left|\int_Ef(x)T_t^*(\bone_Gg)(x)\,\m(\d x) \right|\notag \\
&\leq\sup_{g\in L^q(E;\,\m),\|g\|_{L^q(E;\,\m)}=1}\left|\int_Ef(x)T_t^*g(x)\,\m(\d x) \right|\notag\\
&\leq\sup_{g\in L^q(E;\,\m),\|g\|_{L^q(E;\,\m)}=1} \|f\|_{L^p(E;\,\m)}\|T_t^*g\|_{L^q(E;\,\m)}\notag\\
&\leq e^{\alpha_0(2(q-1)/q)t}
\sup_{g\in L^q(E;\,\m),\|g\|_{L^q(E;\,\m)}=1} \|f\|_{L^p(E;\,\m)}\|g\|_{L^q(E;\,\m)}\notag\\
&=e^{\alpha_0(2/p)t}
\|f\|_{L^p(E;\,\m)}.
\notag
\end{align}
Since $G$ is arbitrary, 
$\{ e^{-\alpha_0(2/p)t}P_t\}_{t \ge 0}$ is extended to a contraction semigroup on $L^p(E;\,\m)$, which is denoted by $\{e^{-\alpha_0(2/p)t}T_t\}_{t \ge 0}$.
Let $f \in L^p(E;\,\m)$ given. Then, for any $\varepsilon >0$, there exists $g\in C_0 (E)$ such that $
\| f-g\| _{L^p(E;\,\m)}< \varepsilon/2.$ By using \eqref{eq:contr*}, we obtain that for any $t>0$,
\begin{align*}
&\| T_t f -f \| _{L^p(E;\,\m)} \notag \\
&\hspace{0.5cm}\le  \| T_t f - T_t g \| _{L^p(E;\,\m)} + \| T_t g -g \| _{L^p(E;\,\m)} + \| g-f \| _{L^p(E;\,\m)}  
\\
&\hspace{0.5cm}\le \varepsilon{(1+e^{\alpha_0(2/p)t})}+\| T_t g -g \| _{L^p(E;\,\m)}.\notag
\end{align*}
 The rest of the proof is similar to that of a).
\end{proof}

\begin{rem}\label{rem:rightcontinuity}
{\rm 
In the proof of Proposition~\ref{prop:L2semigroup}, the sample path right-continuity of ${\bf X}$ is used. However, the right-continuity of $\{P_t\}_{t \ge 0}$ on $C_0(E)$ and the fact that $C_0(E)$ is a dense subset of $L^p(E;\m)$ play an  essential role.}
\end{rem}

In the sequel, we fix $p \in (1,+\infty)$ under ${\bf (A1)}$ with $\alpha_0:=0$ (resp.~$p\in[2,+\infty)$ under ${\bf (A2)}$), and let $\{ T_t\}_{t \ge 0}$ be the  strongly continuous contraction semigroup as in Proposition~\ref{prop:L2semigroup}. For $f \in L^p(E;\,\m)$ and $\alpha \in (2\alpha_0/p,+\infty)$, we put
\[
G_{\alpha }f=\int_{0}^\infty e^{-\alpha t}T_{t}f\,\d t,
\]
the integral being defined as the Bochner integral in $L^p(E;\,\m)$. Then, $\{G_{\alpha}\}_{\alpha >2\alpha_0/p}$ becomes a strongly continuous contraction resolvent on $L^p(E;\,\m)$ in the following sense that 
$\lim_{\alpha\to\infty}\alpha G_{\alpha}f=f$ for $f\in L^p(E;\,\m)$; 
$(\alpha -2\alpha_0/p)\|G_{\alpha}f\|_{L^p(E;\,\m)}\leq\|f\|_{L^p(E;\,\m)}$ for $f\in L^p(E;\,\m)$ and 
$\alpha>2\alpha_0/p$
; $G_{\alpha}-G_{\beta}+(\alpha-\beta)G_{\alpha}G_{\beta}=0$ for 
all $\alpha,\beta>2\alpha_0/p$. 
 Moreover, we have $R_{\alpha }f=G_{\alpha }f$ $\m$-a.e. on $E$  for any $f \in  L^p(E;\,\m)\cap \mathscr{B}_b(E)$ and $\alpha \in (2\alpha_0/p,+\infty)$. The generator $(A,\text{Dom}(A))$ of $\{T_t\}_{t \ge 0}$ is defined by
\begin{align*}
\text{Dom}(A)&:=\left\{f \in L^p(E;\,\m) \;\left|\; \lim_{t \to 0} (T_t f-f)/t \text{ exists in }L^p(E;\,\m)\right.\right\}, \\
Af&:=\lim_{t \to 0}\frac{T_t f-f}{t},\quad f \in \text{Dom}(A).
\end{align*}
It is known that $\text{Dom}(A)$ is equal to $G_\alpha(L^p(E;\,\m))$ for some/any $\alpha \in (2\alpha_0/p,+\infty)$. See \cite[Chapter~I, Proposition~1.5]{MR} for details.

Next, we define the transition kernel of ${\bf X}$. For $t>0$, $x \in E$, and $B \in \mathscr{B}(E)$, we set
\[
P_t(x,B):=\bE_x[\bone_{B}(X_t)]=\bP_x(X_t \in B).
\]
The resolvent kernel of ${\bf X}$ is defined as \[R_\alpha(x,B)=\int_{0}^\infty e^{-\alpha t}P_t(x,B)\,\d t,\quad \alpha>0,\; x \in E\; \text{, and }\;B\in \mathscr{B}(E).\]   The transition kernel (resp. the resolvent kernel) is often said  
to be \emph{absolutely continuous with respect to $\m$} if  $P_t(x,\cdot)$ (resp. $R_\alpha(x,\cdot)$) is absolutely continuous with respect to $\m$ for any $x \in E$ and $t>0$ (resp. $\alpha>0$).

\begin{lem}\label{lem:density}
\begin{enumerate}
\item \label{lem:density1} If $\{P_t\}_{t \ge 0}$ has the strong Feller property, then $P_t(x,\cdot )$ is absolutely continuous with respect to $\m$ for all $x\in E$ and $t > 0$.
\item \label{lem:density2} If $\{R_{\alpha}\}_{\alpha >0}$ has the strong Feller property, then  $R_\alpha (x,\cdot )$ is absolutely continuous with respect to $\m$ for all $x\in E$ and $\alpha>0$.
\end{enumerate}
\end{lem}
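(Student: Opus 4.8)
The plan is to prove absolute continuity in its defining form: fix a Borel set $B\in\mathscr{B}(E)$ with $\m(B)=0$ and show that then $P_t(x,B)=0$ for every $x\in E$ and $t>0$ in part~a), and $R_\alpha(x,B)=0$ for every $x\in E$ and $\alpha>0$ in part~b). Writing $\bone_B$ for the indicator of $B$, we have $\bone_B\in\mathscr{B}_b(E)$ with $P_t\bone_B(x)=P_t(x,B)$ and $R_\alpha\bone_B(x)=R_\alpha(x,B)$, so this is precisely the statement that these kernels charge no $\m$-null set.

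\textbf{Vanishing $\m$-almost everywhere.} First I would show that $P_t\bone_B$ vanishes $\m$-a.e. for every $t>0$. Since $\m(B)=0$, the function $\bone_B$ is the zero element of $L^p(E;\,\m)$, and because $P_t\bone_B=T_t\bone_B$ $\m$-a.e. for the strongly continuous extension $\{T_t\}_{t\ge0}$ of Proposition~\ref{prop:L2semigroup} while $T_t$ is linear, we obtain $P_t\bone_B=T_t 0=0$ $\m$-a.e.; under {\bf (A1)} one may argue even more directly, since $\bone_B\in\mathscr{B}_+(E)$ and \eqref{eq:exc} give $0\le\int_EP_t\bone_B\,\d\m\le\m(B)=0$. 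For the resolvent I would then use the integral representation $R_\alpha\bone_B(x)=\int_0^\infty e^{-\alpha t}P_t\bone_B(x)\,\d t$: since $P_t\bone_B\ge0$ and $\int_EP_t\bone_B\,\d\m=0$ for each $t$, Tonelli's theorem yields $\int_ER_\alpha\bone_B\,\d\m=0$, hence $R_\alpha\bone_B=0$ $\m$-a.e. for every $\alpha>0$. This route, passing through the semigroup, avoids any restriction $\alpha>2\alpha_0/p$ on the $L^p$-resolvent $G_\alpha$.

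\textbf{Passage to every $x$ via the strong Feller property and full support.} Finally I would invoke the hypotheses: by Definition~\ref{def:SF}~a) we have $P_t\bone_B\in C_b(E)$, and by Definition~\ref{def:SF}~b) we have $R_\alpha\bone_B\in C_b(E)$. The decisive, though elementary, step is the rigidity of continuous functions against a full-support measure: if a continuous function $h$ vanishes $\m$-a.e., then the open set $\{h\ne0\}$ is $\m$-null, and since $\m$ has full topological support a nonempty open set has strictly positive $\m$-measure; hence $\{h\ne0\}=\emptyset$ and $h\equiv0$. Applying this to $h=P_t\bone_B$ gives $P_t(x,B)=0$ for all $x\in E$, proving a), and to $h=R_\alpha\bone_B$ gives $R_\alpha(x,B)=0$ for all $x\in E$, proving b). I expect the only points genuinely requiring care to be the $\m$-version identifications $P_t\bone_B=T_t\bone_B$ and (where used) $R_\alpha\bone_B=G_\alpha\bone_B$ $\m$-a.e. for the bounded Borel function $\bone_B$, which are exactly the facts recorded before the lemma; neither analyticity nor the finer semi-Dirichlet structure is needed here.
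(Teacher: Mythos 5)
Your proof is correct and follows essentially the same route as the paper: show that the kernel applied to an $\m$-null set $B$ gives a function vanishing $\m$-a.e., then use the strong Feller property together with the full topological support of $\m$ to upgrade this to vanishing at every point. The only (cosmetic) difference lies in the a.e.-vanishing step: the paper pairs $T_t\bone_B$ against $L^2$ test functions via the dual semigroup $T_t^*$, whereas you use that $\bone_B$ is the zero element of $L^p$ (or the excessivity inequality under \textbf{(A1)}) and then obtain the resolvent case by a Tonelli integration of the semigroup case, which is exactly the content of the paper's remark that part b) is ``almost the same.''
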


\begin{proof}
We prove a). Fix $t>0$ and $B \in \mathscr{B}(E)$ with $\m(B)=0$.
Then, $\bone_{B} \in L^2(E;\,\m)$ and for any $g\in L^2(E;\,\m)$, \[
\int _{E} g(x) T_{t} \bone_{B}(x) \,\m(\d x) = \int _{B} T_{t}^* g(x) \,\m(\d x) =0.\]
This implies that $T_{t}\bone_{B}(x) =0$ for $\m$-a.e. $x \in E$, and hence $P_{t}\bone_{B}(x) =0$ for $\m$-a.e. $x \in E$. On the other hand, by the strong Feller property of $\{P_t\}_{t \ge 0}$, $P_{t}\bone_{B}$ is continuous on $E$. 
Thus, $P_{t}\bone_{B}(x) =0$ for any $x\in E$, and it yields the absolute continuity. The proof of \ref{lem:density2} is almost same. So, we omit it.
\end{proof}

In what follows, if no confusion will arise, the complexifications of subspaces of any real Banach space  and linear operators on them are denoted by the same symbols. 
So any functions in $L^p(E;\,\m)$ are regarded as complex valued if there is no special remark. 
With this notation, we give the definitions of analytic semigroups and sectorial operators. 
 For $\theta \in (0,\pi)$, we define \[S_\theta:=\{ \lambda \in {\mathbb C} \setminus \{ 0\} \mid |\arg \lambda | < \theta\}.\] 

\begin{defi}[{\cite[Definition~2.5.1]{Pa}}]\label{def:analytic}
\rm The semigroup $\{T_t\}_{t\ge 0}$ on $L^p(E;\,\m)$ is said to be \emph{analytic} if there exists $\delta \in (0,\pi/2]$ such that $\{ T_t\}_{t \ge 0}$ is extended to a family of operators $\{ T_z\}_{z\in S_{\delta}\cup \{ 0\}}$ on $L^p(E;\,\m)$ 
with the following properties:
\begin{enumerate}
\item $T_{z_1 + z_2} = T_{z_1} T_{z_2}$ for $z_1, z_2 \in S_{\delta}$,
\item $z \mapsto T_z$ is analytic on $S_{\delta}$,
\item $\lim _{z\in S_{\varepsilon},\, z\to 0} T_z f = f$ for any $f\in L^p(E;\,\m)$ and $\varepsilon \in (0,\delta)$.
\end{enumerate}
\end{defi}

The resolvent set and the resolvent operator of the generator $(A,\text{Dom}(A))$ are  denoted by $\rho(A)$ and $\{R(z\,;A)\}_{z \in \rho(A)}$, respectively. From the definitions, it is obvious that $ \rho(A)$ includes the positive half line $(0,+\infty)$.
\begin{defi}[{\cite[(5.1), (5.2)]{Pa}, \cite[Definition~2.0.1]{Lu}}]\label{def:sectorial}
{\rm \quad The generator \\ $(A,\text{\rm Dom}(A))$ on $L^p(E;\,\m)$ is said to be  \emph{sectorial} if there exist constants $\theta \in (\pi /2, \pi)$ and $M \in (0,+\infty)$ such that $\rho (A) \supset S_{\theta}$ and 
\begin{align}\label{eq:sec}
 \quad \|z R(z\,; A) f\|_{L^p(E;\,\m)} \le M \|f\|_{L^p(E;\,\m)},\quad z \in S_{\theta},\,\, f \in L^p(E;\,\m).
\end{align}
}
\end{defi}
We often say that $(A,\text{\rm Dom}(A))$ is a \emph{sectorial operator on $L^p(E;\,\m)$ with constants $\theta \in (\pi /2, \pi)$ and $M \in (0,+\infty)$} if it satisfies \eqref{eq:sec}. 

\vspace{0.3cm}

The next proposition is a special case of \cite[Theorems~1.7.7 and 2.5.2]{Pa}

\begin{prop}\label{prop:anal}
Let $\{ T_t\}_{t \ge 0}$ be the strongly continuous 
semigroup on $L^p(E;\,\m)$ constructed in Proposition~\ref{prop:L2semigroup}. Then, 
$\{ T_t\}_{t \ge 0}$ is analytic if and only if its generator $A$ is sectorial.  
In this case, $\{T_t\}_{t \ge 0}$ is expressed by the Dunford integral as
\begin{align*}
T_t = \frac{1}{2\pi {\rm i}} \int _{\Gamma _{\theta, \varepsilon}} e^{z t} R(z\,;A)\, \d z, \quad t\in (0,+\infty ),
\end{align*}
where $\theta$ is the constant as in Definition~\ref{def:sectorial}, and
\begin{align*}
\Gamma _{\theta, \varepsilon} &:= \{ \lambda \in {\mathbb C}\setminus \{ 0 \}\,\mid\, |\lambda |\geq \varepsilon , |\arg \lambda | = \theta -\varepsilon \} \\
&\quad \cup \{ \lambda \in {\mathbb C}\setminus \{ 0\}\,\mid\, |\lambda | = \varepsilon , |\arg \lambda | \leq \theta -\varepsilon\}, \quad \varepsilon \in (0, \theta -\pi /2).
\end{align*}
\end{prop}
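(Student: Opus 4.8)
The plan is to obtain both assertions by specializing the general semigroup theory of \cite{Pa} to the semigroup $\{T_t\}_{t\ge 0}$ furnished by Proposition~\ref{prop:L2semigroup}. By that proposition $\{T_t\}_{t\ge 0}$ is a strongly continuous semigroup on the Banach space $L^p(E;\,\m)$, a contraction under {\bf (A1)} (where $\alpha_0:=0$) and of exponential type at most $2\alpha_0/p$ under {\bf (A2)}. Fixing $\omega>2\alpha_0/p$, the rescaled family $\{e^{-\omega t}T_t\}_{t\ge 0}$ is then a uniformly bounded strongly continuous semigroup whose generator $A-\omega$ satisfies $0\in\rho(A-\omega)$, which is precisely the standing hypothesis of \cite[Theorem~2.5.2]{Pa}.

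For the equivalence I would read off the characterization of analyticity in \cite[Theorem~2.5.2]{Pa}: a uniformly bounded strongly continuous semigroup is analytic (in the sense of Definition~\ref{def:analytic}) if and only if there exist $\theta\in(\pi/2,\pi)$ and $M>0$ with $\rho(A)\supset S_\theta$ and $\|R(z\,;A)\|_{p\to p}\le M/|z|$ for $z\in S_\theta$. Rewriting the bound as $\|zR(z\,;A)\|_{p\to p}\le M$ shows that the resolvent condition is exactly sectoriality in the sense of \eqref{eq:sec}, so that \emph{analytic} and \emph{sectorial} become equivalent. The only point requiring attention is that for a contraction semigroup the origin may sit on the boundary of the spectrum, whereas \cite{Pa} normalizes so that $0\in\rho(A)$; this is harmless because both Definition~\ref{def:analytic} and Definition~\ref{def:sectorial} are phrased with the origin excluded, and because the two properties are invariant under the shift $A\mapsto A-\omega$. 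Indeed $\rho(A-\omega)=\rho(A)-\omega\supset S_\theta$ still holds, and since $\theta<\pi$ one has $|z+\omega|\ge|z|\sin\theta$ on $S_\theta$, so the resolvent bound persists with a new constant $M/\sin\theta$. Applying \cite[Theorem~2.5.2]{Pa} to $\{e^{-\omega t}T_t\}_{t\ge 0}$ and transferring back (the reverse shift being symmetric) yields the stated equivalence for $\{T_t\}_{t\ge 0}$.

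For the contour representation I would invoke the inversion of the Laplace transform \cite[Theorem~1.7.7]{Pa} together with the sectoriality just obtained. Once $\rho(A)\supset S_\theta$ and $\|R(z\,;A)\|_{p\to p}\le M/|z|$ hold, the contour $\Gamma_{\theta,\varepsilon}$ lies entirely in $\rho(A)$. On its two rays $|\arg z|=\theta-\varepsilon$ one has $\theta-\varepsilon>\pi/2$, hence $\Re z=|z|\cos(\theta-\varepsilon)<0$, so the integrand is dominated by $M|z|^{-1}e^{t|z|\cos(\theta-\varepsilon)}$, which is absolutely integrable for each $t>0$; on the arc $|z|=\varepsilon$ the integrand is bounded. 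Therefore $\frac{1}{2\pi\iup}\int_{\Gamma_{\theta,\varepsilon}}e^{zt}R(z\,;A)\,\d z$ defines a bounded operator for each $t>0$, and identifying it with $T_t$ is the content of the complex inversion formula in \cite[Theorem~1.7.7]{Pa}, the deformation of the vertical Bromwich line to $\Gamma_{\theta,\varepsilon}$ being justified by the decay of the resolvent on $S_\theta$.

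The main obstacle is thus organizational rather than substantive: confirming that the angle conventions of Definition~\ref{def:sectorial} and \eqref{eq:sec} coincide with those of \cite[Theorem~2.5.2]{Pa} (our $\theta\in(\pi/2,\pi)$ against Pazy's $\pi/2+\delta$), and carefully disposing of the spectral-boundary issue at the origin through the rescaling $\{e^{-\omega t}T_t\}_{t\ge 0}$. Once these are settled, the proposition is a direct specialization of the two cited theorems.
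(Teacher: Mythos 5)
Your overall strategy---specializing Pazy's Theorems 1.7.7 and 2.5.2 after rescaling by $e^{-\omega t}$---is exactly the paper's own proof (the paper offers nothing beyond that citation), and one half of your reduction is sound: the estimate $|z+\omega|\ge |z|\sin\theta$ on $S_\theta$ does show that sectoriality of $A$ implies sectoriality of $A-\omega$, so Pazy's implication (c)$\Rightarrow$(a) applied to $\{e^{-\omega t}T_t\}_{t\ge0}$, followed by multiplication with $e^{\omega z}$ and the standard contour manipulation, yields analyticity of $\{T_t\}_{t\ge0}$ together with the Dunford representation.

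The genuine gap is in the opposite direction, where you assert that ``the reverse shift being symmetric'' lets you transfer sectoriality of $A-\omega$ back to $A$. That claim is false. Sectoriality of $A-\omega$ gives $\rho(A)\supset S_\theta+\omega$ with $\|R(w\,;A)\|_{p\to p}\le M/|w-\omega|$ there, but the translated region $S_\theta+\omega$ contains \emph{no} sector $S_{\theta''}$ with $\theta''>\pi/2$: for small $\varepsilon,\eta>0$ the points $z=\varepsilon e^{\pm \iup(\pi/2+\eta)}$ lie in every such $S_{\theta''}$, while $z-\omega$ has argument close to $\pi$ and so lies outside $S_\theta$; nothing then places $z$ in $\rho(A)$. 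Concretely, if $A-\omega$ is the Laplacian on $L^2(\R^d)$, then $A=\Delta+\omega$ has the positive real number $\omega$ in its spectrum, hence is not sectorial in the sense of Definition~\ref{def:sectorial}, even though $e^{tA}=e^{\omega t}e^{t\Delta}$ is analytic in the sense of Definition~\ref{def:analytic}. The failure is structural, not technical: sectoriality as in \eqref{eq:sec} forces $\sup_{t>0}\|T_t\|_{p\to p}<\infty$ (rescale the contour $\Gamma_{\theta,\varepsilon}$ with $\varepsilon=1/t$ in the Dunford integral), so it can never be recovered from properties of the rescaled semigroup alone. A compounding inaccuracy: Pazy's Theorem 2.5.2(a) is not ``analytic in the sense of Definition~\ref{def:analytic}'' but ``analytic \emph{and} $\|T_z\|$ uniformly bounded on every closed subsector''; this extra boundedness is precisely what drives (a)$\Rightarrow$(c) and is not implied by Definition~\ref{def:analytic}, nor even by contractivity of $T_t$ on the real axis. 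So your chain ``$\{T_t\}$ analytic $\Rightarrow$ $\{e^{-\omega t}T_t\}$ analytic $\Rightarrow$ $A-\omega$ sectorial $\Rightarrow$ $A$ sectorial'' breaks at the last step (and the middle step needs the subsector boundedness you did not establish); the direction ``analytic $\Rightarrow$ sectorial'' requires working with $A$ itself under a genuine uniform bound on $\{T_t\}$, not with the shifted generator.
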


\begin{thm}\label{thm:anal}
Let $\{ T_t\}_{t \ge 0}$ be the  strongly continuous  semigroup  constructed in Proposition~\ref{prop:L2semigroup}. 
Then we have the following: 
\begin{enumerate}
\item Suppose that ${\bf (A1)}$ holds and $\{ T_t\}_{t \ge 0}$ is analytic on $L^2(E;\,\m)$ with the sector $S_{\delta}$ for some $\delta \in (0,\pi/2]$. Then 
$\{ T_t\}_{t \ge 0}$ is analytic on $L^p(E;\,\m)$ for $p\in(1,+\infty)$ with the sector $S_{\delta'}$, where $\delta':=\delta\left(1-\left|(2/p)-1 \right|\right)$. 
\item Suppose that ${\bf (A2)}$ holds. Then $\{ T_t\}_{t \ge 0}$ is analytic on $L^p(E;\,\m)$ for $p\in[2,+\infty)$ with the sector $S_{\delta'}$, where $\delta':=(2/p)\arctan K^{-1}$ and $K\geq1$ is the constant specified in 
$(\mathscr{E}\!.\,2)$.  
\end{enumerate}
\end{thm}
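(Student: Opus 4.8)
The plan is to prove both parts by complex (Stein) interpolation applied to the complex-time operators $T_z$, playing the sharp $L^2$-analyticity angle off against the trivial angle $0$ available on $L^1$ and $L^\infty$ from the contraction property. Since Stein interpolation combines the two endpoint bounds with the Riesz--Thorin exponent weights, and since rotating the time variable turns the angle into an argument scaled by the same weight, the interpolated angle emerges as the corresponding convex combination; this is the source of the formula $\delta'=\delta(1-|2/p-1|)$.

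I first treat a) for $p\in(2,+\infty)$. Under ${\bf (A1)}$ the semigroup $\{T_t\}_{t\ge0}$ is a contraction on every $L^r(E;\,\m)$, $r\in[1,+\infty]$ (sub-Markovianity and \eqref{eq:exc}), and by hypothesis it is analytic of angle $\delta$ on $L^2(E;\,\m)$; via Proposition~\ref{prop:anal} its generator is sectorial, so the Dunford representation gives $M(\gamma):=\sup_{w\in S_\gamma}\|T_w\|_{2\to2}<\infty$ for every $\gamma\in(0,\delta)$. Fix such a $\gamma$ and $r_0>0$, and on the strip $\{0\le\Re s\le1\}$ consider
\[
G_s:=T_{\zeta(s)},\qquad \zeta(s):=r_0\,\eup^{\iup\gamma(1-s)},
\]
so that $|\zeta(s)|=r_0\,\eup^{\gamma\Im s}$ and $\arg\zeta(s)=\gamma(1-\Re s)\in[0,\gamma]\subset(-\delta,\delta)$. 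Thus $\zeta$ maps the strip into $S_\delta$, the family $s\mapsto G_s$ is analytic and operator-norm bounded there, on $\Re s=0$ we have $\|G_{\iup y}\|_{2\to2}\le M(\gamma)$, and on $\Re s=1$ the point $\zeta(1+\iup y)$ is a positive real, whence $\|G_{1+\iup y}\|_{\infty\to\infty}\le1$. Taking $\theta=1-2/p$ gives $1/p=(1-\theta)/2$, and Stein interpolation yields $\|T_{\zeta(\theta)}\|_{p\to p}\le M(\gamma)^{2/p}$ with $\arg\zeta(\theta)=\gamma(2/p)$. Letting $r_0$ range over $(0,+\infty)$, using the reflection $\gamma\mapsto-\gamma$, and then $\gamma\uparrow\delta$, we obtain $\sup_{w\in S_{\gamma'}}\|T_w\|_{p\to p}<\infty$ for every $\gamma'<\delta(2/p)$. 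The range $p\in(1,2)$ is handled identically after replacing the $L^\infty$-line by the $L^1$-line (the $L^1$-contraction coming again from \eqref{eq:exc}); the weights then give the angle $\delta(2-2/p)$. In both cases this equals $\delta(1-|2/p-1|)=\delta'$.

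It remains to upgrade uniform boundedness on subsectors to analyticity in the sense of Definition~\ref{def:analytic}. The operators $T_w$ just constructed coincide on $L^2(E;\,\m)\cap L^p(E;\,\m)$ with the $L^2$-semigroup, so the semigroup law $T_{w_1+w_2}=T_{w_1}T_{w_2}$ transfers to $L^p(E;\,\m)$ by density and the boundedness just proved. For $f\in L^p(E;\,\m)$ and $g\in L^{q}(E;\,\m)$ with $q=p/(p-1)$ the scalar function $w\mapsto\scalp{T_wf}{g}$ is a locally uniform limit of its $L^2$-counterparts, hence holomorphic on $S_{\delta'}$ by Vitali's theorem; together with the local bound this gives holomorphy of $w\mapsto T_wf$. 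Finally $T_wf\to f$ as $w\to0$ within each subsector follows from the uniform bound, the strong continuity of $\{T_t\}_{t\ge0}$ on $L^p(E;\,\m)$ (Proposition~\ref{prop:L2semigroup}), and density. This proves a).

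For b) I first extract the $L^2$-angle from the sector condition. Fix $\alpha>\alpha_0$, write a complex $u=f+\iup g$ with real $f,g\in\mathscr{F}$, and let $\mathscr{E}^{(s)}_\alpha$, $\mathscr{E}^{(a)}_\alpha$ be the symmetric and antisymmetric parts of $\mathscr{E}_\alpha$. Using \eqref{eq:WSC} and $2\sqrt{ab}\le a+b$,
\[
|\Im\mathscr{E}_\alpha(u,u)|=2\,|\mathscr{E}^{(a)}_\alpha(f,g)|\le 2K\,\mathscr{E}^{(s)}_\alpha(f,f)^{1/2}\mathscr{E}^{(s)}_\alpha(g,g)^{1/2}\le K\,\Re\mathscr{E}_\alpha(u,u),
\]
so the numerical range of $\alpha-A$ lies in the sector of half-angle $\arctan K$; hence $A-\alpha$ generates a bounded analytic semigroup of angle $\pi/2-\arctan K=\arctan K^{-1}$ on $L^2(E;\,\m)$, and so does $\{T_t\}_{t\ge0}$ after the entire multiplier $\eup^{\alpha t}$. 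Now put $S_t:=\eup^{-\alpha_0t}T_t$, which is an $L^2$-contraction, an $L^\infty$-contraction (by $(\mathscr{E}\!.\,4c)$ and $\alpha_0\ge0$), and $L^2$-analytic of angle $\arctan K^{-1}$ with subsectorial bounds. Running the interpolation of the two previous paragraphs on $\{S_t\}_{t\ge0}$ between $L^2$ and $L^\infty$ gives, for $p\in[2,+\infty)$, analyticity of $\{S_t\}_{t\ge0}$ on $L^p(E;\,\m)$ of angle $(2/p)\arctan K^{-1}$; multiplying back by $\eup^{\alpha_0z}$, which is entire and hence angle-preserving, yields the same for $\{T_t\}_{t\ge0}$, proving b). The main obstacle is not the arithmetic of angles but the careful verification of the Stein hypotheses for the family $\{G_s\}$ (holomorphy together with admissible growth in the imaginary direction) and, above all, the passage from the resulting family of interpolation inequalities---which a priori only bound each individual $T_w$ on $L^p$---to genuine holomorphy of $w\mapsto T_wf$ and strong continuity at the vertex, i.e.\ to analyticity in the sense of Definition~\ref{def:analytic}.
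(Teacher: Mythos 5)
Your strategy coincides with the paper's own: the authors prove a) by Stein's convexity-theorem interpolation following \cite[Chapter~III, Theorem~1]{Stein}, and prove b) by first extracting $L^2$-analyticity of angle $\arctan K^{-1}$ from the sector condition as in \cite[Chapter~I, Corollary~2.21]{MR} combined with \eqref{eq:WSC}, and then interpolating. Your angle bookkeeping ($\delta(2/p)$ for $p>2$ against $L^\infty$, $\delta(2-2/p)$ for $p<2$ against $L^1$, both equal to $\delta(1-|2/p-1|)$) is correct, and your numerical-range computation $|\Im\mathscr{E}_\alpha(u,u)|\le K\,\Re\mathscr{E}_\alpha(u,u)$ is exactly the content of the cited corollary of Ma--R\"ockner.

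However, part b) as written contains a genuine gap. You prove bounded analyticity (uniform subsectorial bounds) only for $\{\eup^{-\alpha t}T_t\}_{t\ge0}$ with $\alpha>\alpha_0$, and then assert that $S_t:=\eup^{-\alpha_0 t}T_t$ is ``$L^2$-analytic of angle $\arctan K^{-1}$ \emph{with subsectorial bounds}''. This does not follow: $\eup^{-\alpha_0 z}T_z=\eup^{(\alpha-\alpha_0)z}\bigl(\eup^{-\alpha z}T_z\bigr)$, and $|\eup^{(\alpha-\alpha_0)z}|=\eup^{(\alpha-\alpha_0)\Re z}$ is unbounded on every sector, so the uniform bounds you established are destroyed by this multiplier. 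The bounds are load-bearing in your scheme, because on the boundary line $\Re s=0$ your parametrization $\zeta(s)=r_0\eup^{\iup\gamma(1-s)}$ sweeps the entire ray $\{\arg w=\gamma,\ |w|\in(0,+\infty)\}$, so you need control of $\|S_w\|_{2\to2}$ for arbitrarily large $|w|$. Two repairs are available: (i) interpolate $\{\eup^{-\alpha t}T_t\}_{t\ge0}$ itself --- for $\alpha>\alpha_0\ge0$ it is both an $L^2$- and an $L^\infty$-contraction and bounded analytic on $L^2$ by your argument --- obtaining $L^p$-analyticity of $\{\eup^{-\alpha t}T_t\}_{t\ge0}$ with angle $(2/p)\arctan K^{-1}$, and only then multiply by the entire scalar $\eup^{\alpha z}$; since Definition~\ref{def:analytic} does not require boundedness on sectors, this yields the claim for $\{T_t\}_{t\ge0}$ (this is the route the paper's sketch suggests); or (ii) observe that $\Im\mathscr{E}_{\alpha_0}(u,u)$ involves only the antisymmetric part of $\mathscr{E}$, so the strong sector condition $(\mathscr{E}\!.\,2)$, which is stated at $\alpha_0$ itself, already gives $|\Im\mathscr{E}_{\alpha_0}(u,u)|\le K\,\Re\mathscr{E}_{\alpha_0}(u,u)$, and the numerical-range argument applies directly to $\alpha_0-A$. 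A smaller soft spot occurs in a): your claim $M(\gamma)=\sup_{w\in S_\gamma}\|T_w\|_{2\to2}<\infty$ for \emph{every} $\gamma<\delta$ needs the bounded-analytic form of Pazy's equivalence, i.e.\ that the sectoriality angle matches the analyticity angle $\delta$; Definition~\ref{def:analytic} alone does not guarantee this, so you should either justify it using the $L^2$-contractivity on the real axis, or invoke Stein's theorem in its admissible-growth form, for which the exponential bound $\log\|T_{\zeta(s)}\|_{2\to2}\lesssim \eup^{\gamma|\Im s|}$ (coming from the semigroup law and local bounds) suffices since $\gamma<\pi$.
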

\begin{proof}
The proof can be done similarly as in the proof of \cite[Chapter III, Theorem~1]{Stein} by using the convexity theorem in \cite[p.~69]{Stein}. 
Combining the same proof of \cite[Chapter~I, Corollary~2.21]{MR} with \eqref{eq:WSC},  ${\bf (A2)}$  
implies that $\{ e^{-\alpha t}T_t\}_{t \ge 0}$ with $\alpha>\alpha_0$ is analytic on $L^2(E;\,\m)$ with sector $S_{\arctan K^{-1}}$.  
\end{proof}
\begin{rem}\label{rem:analytic}
\rm 
\begin{enumerate}
\item[(i)] 
Let $\{T_t\}_{t\geq0}$ be a strongly continuous symmetric  
semigroup on $L^2(E;\,\m)$. 
Then $\{ T_t\}_{t \ge 0}$ is analytic on $L^2(E;\,\m)$ with the sector 
$S_{\pi/2}$ as proved in \cite[Chapter~III, Theorem~1]{Stein}. 
\item[(ii)] Let $\{T_t\}_{t\geq0}$ be a strongly continuous contraction 
semigroup on $L^2(E;\,\m)$ associated with a coercive closed form 
$(\mathscr{E},\mathscr{F})$ on $L^2(E;\,\m)$ in the sense of \cite[Chapter~I, Definition~2.4]{MR}. Then $\{e^{-t}T_t\}_{t\geq0}$ (equivalently, $\{T_t\}_{t\geq0}$)  is analytic on $L^2(E;\,\m)$ 
with the sector $S_{\delta}$ for $\delta:=\arctan K^{-1}$, where $K\,\geq1$ is the constant 
appeared in the weak sector condition for $(\mathscr{E},\mathscr{F})$ 
(see \cite[Chapter I, (2.3) and Corollary~2.21]{MR}, where $K>0$ is only noted, but indeed, $K\geq1$ holds automatically).  If further $(\mathscr{E},\mathscr{F})$ satisfies the strong sector condition with $K \geq 1$ (see  \cite[Chapter~I, (2.4)]{MR} for strong sector condition), then 
$\{T_t\}_{t\geq0}$ is analytic on $L^2(E;\,\m)$ 
with the sector $S_{\delta}$ having the same expression as above. This assertion is not so sharp when 
$\{T_t\}_{t\geq0}$ is $\m$-symmetric, in this case $K=1$ so that $\delta=\pi/4$. 
\item[(iii)] Let $\{T_t\}_{t\geq0}$ be a strongly continuous contraction 
semigroup on $L^2(E;\,\m)$ associated with a (non-negative definite) 
non-symmetric Dirichlet form 
$(\mathscr{E},\mathscr{F})$ on $L^2(E;\,\m)$ (see \cite[Chapter I, Definition~4.5]{MR} for non-symmetric Dirichlet form).  Then, for $p\in(1,+\infty)$, 
one can prove that $\{ e^{-t}T_t\}_{t \ge 0}$ (equivalently, $\{T_t\}_{t\geq0}$) is analytic on $L^p(E;\,\m)$ with the sector $S_{\delta'}$ in the same way of the proof of Theorem~\ref{thm:anal}, where $\delta':=(\arctan K^{-1})\left(1-\left| (2/p)-1 \right|\right)$ for some $K \ge 1$ 
derived from the weak sector condition for $(\mathscr{E},\mathscr{F})$.  
If further $(\mathscr{E},\mathscr{F})$ satisfies the strong sector condition with $K \ge 1$, then $\{T_t\}_{t\geq0}$ is analytic on $L^p(E;\,\m)$ 
with the sector $S_{\delta'}$ having the same expression as above. 
\end{enumerate}
\end{rem}

\section{Equivalence of the strong Feller properties}\label{sec:equi}

In this section, we use the same notation as in Section~\ref{sec:stoc}.
We only consider a Hunt process ${\bf X}$ and assume that the semigroup $\{P_t\}_{t\geq0}$ 
can be extended to a strongly continuous semigroup $\{T_t\}_{t\geq0}$ on $L^p(E;\,\m)$ for some $p\in[1,+\infty)$. We fix such a $p\in[1,+\infty)$.  

\begin{prop}\label{prop:rstFeller}
Fix $p \in [1,+\infty)$ as above and assume the following conditions are satisfied:
\begin{itemize}
\item[{\rm (i)}] $(A,\text{\rm Dom}(A))$ is a sectorial operator on $L^p(E;\,\m)$ with  constants $\theta \in (\pi /2, \pi)$ and $M \in (0,+\infty)$.
\item[{\rm (ii)}] There exists $\varepsilon \in (0, \theta -\pi /2)$ such that for any $z \in \Gamma _{\theta, \varepsilon}$ and $f\in  C_b(E) \cap L^p(E;\,\m)$, $R(z\,;A)f$ possesses a bounded continuous $\m$-version on $E$.
\item[{\rm (iii)}] $\{ R_\alpha\}_{\alpha>0}$ has the strong Feller property. 
\end{itemize}
Then, for any $f\in  L^p(E;\,\m)\cap {\mathscr B}_b(E)$, there exist a ${\mathbb C}$-valued Borel measurable function $\overline{Rf}$ on $\Gamma _{\theta, \varepsilon} \times E$ and an $\m$-null set $N \subset E$ with the following properties:
\begin{enumerate}
\item\label{prop:rstFeller1} $\overline{Rf}(z,x) = R(z\,;A)f(x)$ for $|\d z|\otimes \m$-almost every $(z,x) \in \Gamma_{\theta, \varepsilon}\times E$,

\item \label{prop:rstFeller3} $\overline{Rf}(\cdot ,x)$ has a $\{(p-1)/p\}$-H\"older continuous version for any $x\in E$,

\item \label{prop:rstFeller4} for any compact subset $K \subset E$,
\begin{align*}
\lim_{j \to \infty}\sup_{x,y \in K \setminus N;\ d(x,y)<1/j} \left|\overline{Rf}(\cdot,x)-\overline{Rf}(\cdot,y) \right|=0,\quad |\d z|\text{\rm -a.e. on }\Gamma_{\theta,\varepsilon}.
\end{align*}
\end{enumerate}
\end{prop}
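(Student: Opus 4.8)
The plan is to produce the jointly measurable representative $\overline{Rf}$ out of the fibrewise continuous versions furnished by (ii)--(iii), and to extract its $z$-regularity from the analyticity of the resolvent together with the sectorial decay in (i). Throughout I would parametrise the (rectifiable) contour $\Gamma_{\theta,\varepsilon}$ by arclength through a map $\gamma$, and note $\Gamma_{\theta,\varepsilon}\subset S_\theta\subset\rho(A)$, so that by \eqref{eq:sec} one has $\|R(z\,;A)\|_{L^p(E;\,\m)\to L^p(E;\,\m)}\le M/|z|\le M/\varepsilon$ on $\Gamma_{\theta,\varepsilon}$, with the resolvent analytic there and $\frac{d}{dz}R(z\,;A)=-R(z\,;A)^2$.

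First I would build, for each fixed $z\in\Gamma_{\theta,\varepsilon}$, a bounded continuous $\m$-version of $R(z\,;A)f$. Fix a real $\alpha>0$ large enough that $\alpha\in\rho(A)$ and $G_\alpha=R(\alpha\,;A)$. The resolvent identity gives $R(z\,;A)f=G_\alpha f+(\alpha-z)R(z\,;A)G_\alpha f$ in $L^p(E;\,\m)$. For $f\in L^p(E;\,\m)\cap\mathscr{B}_b(E)$ we have $G_\alpha f=R_\alpha f$ $\m$-a.e., and by (iii) the function $R_\alpha f$ has a bounded continuous version lying in $C_b(E)\cap L^p(E;\,\m)$; applying (ii) to this version, $R(z\,;A)G_\alpha f$ also has a bounded continuous version. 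Hence $R(z\,;A)f$ admits a bounded continuous $\m$-version $\overline{Rf}(z,\cdot)\in C_b(E)\cap L^p(E;\,\m)$, unique because $\m$ has full support. Joint Borel measurability of $(z,x)\mapsto\overline{Rf}(z,x)$ then follows routinely: $z\mapsto R(z\,;A)f$ is continuous into the separable space $L^p(E;\,\m)$, hence an a.e.\ pointwise limit (along a subsequence of step-function approximations) of jointly measurable maps, and uniqueness of the continuous fibres identifies this limit with $\overline{Rf}$. This yields \ref{prop:rstFeller1}.

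For the Hölder regularity \ref{prop:rstFeller3}, set $u(s):=R(\gamma(s)\,;A)f$, a $C^1$ map into $L^p(E;\,\m)$ with $u'(s)=-\gamma'(s)R(\gamma(s)\,;A)^2f$ and, by the sectorial bound, $\|u'(s)\|_{L^p(E;\,\m)}\le (M/|\gamma(s)|)^2\|f\|_{L^p(E;\,\m)}$; since $|\gamma(s)|$ grows linearly along the two rays, $s\mapsto\|u'(s)\|_{L^p(E;\,\m)}$ lies in $L^p(\R;\,ds)$. Writing $u(s)=u(s_0)+\int_{s_0}^s u'(\sigma)\,d\sigma$ as a Bochner integral and choosing a jointly measurable representative $V$ of $u'$, Tonelli's theorem gives $\int_E\int_{\R}|V(\sigma,x)|^p\,d\sigma\,\m(dx)<\infty$; hence for $\m$-a.e.\ $x$ the slice $s\mapsto\overline{Rf}(\gamma(s),x)$ is absolutely continuous with weak derivative in $L^p(\R;\,ds)$. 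The one-dimensional Sobolev (Morrey) embedding $W^{1,p}\hookrightarrow C^{0,(p-1)/p}$ then makes this slice $\{(p-1)/p\}$-Hölder in $s$, with Hölder constant dominated by $\big(\int_{\R}|V(\sigma,x)|^p\,d\sigma\big)^{1/p}$, and passing from arclength $s$ back to $z$ (a bi-Lipschitz change on the rectifiable curve) preserves the exponent. This proves \ref{prop:rstFeller3} for $\m$-a.e.\ $x$.

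The assertion \ref{prop:rstFeller4} is then soft: for each $z$ the fibre $\overline{Rf}(z,\cdot)$ is continuous on $E$, hence uniformly continuous on any compact $K$, so the displayed supremum over $d(x,y)<1/j$ is merely a modulus of continuity tending to $0$; restricting to $K\setminus N$ (the $\m$-null set discarded when choosing the representative) and to a countable dense set of $x,y$ renders the supremum Borel in $z$, giving the $|dz|$-a.e.\ statement. The main obstacle is upgrading \ref{prop:rstFeller3} from ``$\m$-a.e.\ $x$'' to \emph{every} $x$: $L^p$-estimates alone cannot achieve this, since no $L^\infty$-bound on $R(z\,;A)$ is available, so one must feed in the continuity of the $\m$-versions. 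Concretely, the Hölder constant $\big(\int_{\R}|\,\overline{R(\gamma(\sigma)\,;A)^2f}\,(x)|^p\,d\sigma\big)^{1/p}$ is built from genuinely continuous fibres (apply (ii) to $\overline{Rf}(\gamma(\sigma),\cdot)\in C_b(E)\cap L^p(E;\,\m)$), hence is lower semicontinuous in $x$ and finite $\m$-a.e.; showing it is in fact finite for every $x$, and then combining the resulting locally uniform Hölder bound with the equicontinuity in \ref{prop:rstFeller4} to pass to the limit along a good sequence $x_n\to x$, delivers the Hölder version at every $x$. Controlling this constant everywhere is the crux of the argument.
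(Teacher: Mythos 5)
Your first three steps track the paper's own proof closely: the resolvent identity $R(z\,;A)f = R_\alpha f + (\alpha-z)R(z\,;A)R_\alpha f$ combined with (iii) and (ii) to produce a bounded continuous fibre for each $z\in\Gamma_{\theta,\varepsilon}$; a joint-measurability step (the paper cites Dunford--Pettis where you use step-function approximation, which is the same device); and the derivative bound $\|\tfrac{\d}{\d z}R(z\,;A)f\|_{L^p(E;\,\m)}\le M^2|z|^{-2}\|f\|_{L^p(E;\,\m)}$ fed through Fubini and the Morrey embedding $W^{1,p}\hookrightarrow C^{0,(p-1)/p}$ to get H\"older slices for $\m$-a.e.\ $x$; your argument for c) via uniform continuity of the fibres on compacts is also the paper's. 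The gap is in your final paragraph. You correctly observe that this only yields b) for $\m$-a.e.\ $x$, but you then declare the upgrade to \emph{every} $x$ to be ``the crux'' and leave it unproven, and the strategy you sketch cannot work: lower semicontinuity of the H\"older constant $x\mapsto\bigl(\int_{\R}|\,\overline{R(\gamma(\sigma)\,;A)^2f}\,(x)|^p\,\d\sigma\bigr)^{1/p}$ together with its a.e.\ finiteness does \emph{not} imply finiteness everywhere (a lower semicontinuous function can be $+\infty$ on a null set, e.g.\ $x\mapsto 1/|x|$), and, as you yourself note, none of the hypotheses (i)--(iii) gives any pointwise control of $R(z\,;A)^2f$ off an $\m$-null set.

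The missing idea is that no upgrade is needed, because $\overline{Rf}$ is an object you are free to \emph{construct}, and properties a) and c) are insensitive to what it does on a null set of $x$'s. The paper takes $N:=N_1\cup N_2$, where $N_1$ is the null set off which the continuous fibres agree $|\d z|$-a.e.\ with the jointly measurable version and $N_2$ is the null set off which the $W^{1,p}$/Morrey argument applies, and simply defines $\overline{Rf}(z,x):=V_zf(x)$ (the continuous fibre) for $x\in E\setminus N$ and $\overline{Rf}(z,x):=0$ for $x\in N$. Property a) is an $|\d z|\otimes\m$-a.e.\ statement, so the modification on $\Gamma_{\theta,\varepsilon}\times N$ is harmless; property b) then holds for every $x$ --- trivially on $N$, since the zero function is H\"older, and off $N$ by exactly your Morrey argument; and property c) is deliberately formulated with the supremum over $K\setminus N$, so the discarded set never enters. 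This is precisely why the null set $N$ appears in the statement at all. With this one observation your proof closes; without it, it is incomplete, and the completion route you propose would fail.
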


\begin{proof}
Fix $f\in L^p(E;\,\m) \cap {\mathscr B}_b(E)$.
In view of the definitions of $\{ R_\alpha\}_{\alpha>0}$ and $\{R(z\,;A)\}_{z \in \rho(A) }$,
\begin{equation}\label{eq:rstFeller}
 R(1\,;A) f(x)=R_{1}f (x), \quad \m\text{-a.e. }x \in E.
\end{equation}
The resolvent equation implies that
\begin{equation}\label{eq:proprstFeller00}
R(z\,;A) = R(1\,;A)+ (1-z)R(z\,;A)R(1\,;A), \quad z\in \rho(A),
\end{equation}
in the sense of bounded linear operators on $L^p(E;\,\m)$.
Hence, by \eqref{eq:rstFeller} and \eqref{eq:proprstFeller00}, we have for any $z\in \rho(A)$, 
\begin{equation}\label{eq:proprstFeller01}
R(z\,;A)f(x) = R_{1} f(x)+ (1-z)R(z\,;A)R_{1} f(x), \quad \m\text{-a.e.}\ x \in E.
\end{equation}
Then, the assumption and (\ref{eq:proprstFeller01}) imply that for any $z\in \Gamma _{\theta ,\varepsilon}$, there exist ${V}_zf \in C_b(E)$ and an $\m$-null set $N_1(z)$ (depending on $z$) such that 
\begin{equation}\label{eq:proprstFeller01-2}
V_zf(x) = R(z\,;A)f(x), \quad x \in E \setminus N_1(z).
\end{equation}
By \cite[Theorem~1.3.2]{DuPe}, the mappings $(z,x) \mapsto R(z\,;A)f(x)$ and $(z,x) \mapsto V_zf(x)$ possess Borel measurable versions on $\Gamma _{\theta ,\varepsilon} \times E$. Then, we see from  \eqref{eq:proprstFeller01-2} that
\begin{align}
\iint _{\Gamma _{\theta ,\varepsilon}\times E} |V_zf(x) - R(z\,;A)f(x) |^p \,|\d z|\otimes \m(\d x) =0.\label{eq:Fubini1} 
\end{align}
Hence, by Fubini's theorem, there exists an $\m$-null set $N_1 \subset E$ such that for any $x\in E\setminus N_1$
\begin{equation}\label{eq:proprstFeller01-3}
V_zf(x) = R(z\,;A)f(x), \quad |\d z|\text{-a.e.}\ z \in \Gamma _{\theta ,\varepsilon}.
\end{equation}

On the other hand, we see from \cite[(5.21)]{Pa} that $
(\d/\d z)R(z\,;A) = -R(z\,;A)^2$,  $z\in \rho(A)$,
in the sense of bounded linear operators on $L^p(E;\,\m)$.
Therefore, we obtain that for any $z\in S_\theta$,
\begin{align*}
\left\| \frac{\d}{\d z}R(z\,;A) f\right\|_{L^p(E;\,\m)} & \le \| R(z\,;A)\| ^2_{L^p \to L^p} \| f\| _{L^p(E;\,\m)}\le M^2 |z|^{-2} \| f\| _{L^p(E;\,\m)}.
\end{align*}
Hence, we obtain that
\begin{align*}
\int _E \left( \int _{\Gamma _{\theta ,\varepsilon}}\left| \frac{\d}{\d z} R(z\,;A)f (x) \right| ^p |\d z| \right) \m(\d x)
&= \int _{\Gamma _{\theta ,\varepsilon}}\left\| \frac{\d}{\d z} R(z\,;A)f \right\| _{L^p(E;\,\m)}^p |\d z|\\
&\le M^{2p} \| f\| _{L^p(E;\,\m)}^p  \int _{\Gamma _{\theta ,\varepsilon}} |z|^{-2p} |\d z|< \infty .
\end{align*}
This estimate implies
\[
R(\,\cdot\,;A) f (x) \in W^{1,p}(\Gamma _{\theta ,\varepsilon}; {\mathbb C}), \quad \m\text{-a.e. }x \in E,
\]
where we denote by $W^{1,p}(\Gamma _{\theta ,\varepsilon}; {\mathbb C})$ the $\C$-valued $p$-th order Sobolev space on $\Gamma _{\theta ,\varepsilon}$. Then, by using the Sobolev embedding theorem and the Fubini's theorem, we obtain an $\m$-null set $N_2 \subset E$ with the following property:  for any $x \in E\setminus N_2$, there exists $W_{{}_{\bullet}}f(x) \in C^{(p-1)/p}(\Gamma _{\theta ,\varepsilon}; {\mathbb C})$ such that 
\begin{equation}\label{eq:proprstFeller02}
R(z \,;A) f (x) =W_zf(x), \quad |\d z|\mbox{-almost every}\ z \in \Gamma _{\theta ,\varepsilon}.
\end{equation}
Here, we denote by $C^{(p-1)/p}(\Gamma _{\theta ,\varepsilon}; {\mathbb C})$ the space of ${\mathbb C}$-valued $\{(p-1)/p\}$-H\"older continuous functions on $\Gamma _{\theta ,\varepsilon}$.

Finally, we let $N:=N_1 \cup N_2$, and define a ${\mathbb C}$-valued function $\overline{Rf}$ on $\Gamma _{\theta ,\varepsilon}\times E$ by
\begin{align*}
\overline{Rf}(z ,x):= 
\begin{cases}
V_zf (x), & (z,x)\in \Gamma _{\theta ,\varepsilon} \times (E\setminus N),\\
0,& (z,x)\in \Gamma _{\theta ,\varepsilon} \times N.
\end{cases}
\end{align*}
Then, properties~\ref{prop:rstFeller1} and \ref{prop:rstFeller3} follow from \eqref{eq:Fubini1}, \eqref{eq:proprstFeller01-3} and \eqref{eq:proprstFeller02}. The property~\ref{prop:rstFeller4} is a consequence of  the continuity of $V_zf(x)$ in $x \in E$.
\end{proof}

We do not know whether the following local uniform estimates can be obtained only from the conditions in Proposition~\ref{prop:rstFeller}: for any $t\in (0,+\infty)$, $f \in  L^p(E;\,\m)\cap \mathscr{B}_b(E)$, and compact subsets $K \subset E$ and $L \subset \mathbb{C}$,
\begin{align}
&\left\|\int _{\Gamma _{\theta, \varepsilon}} e^{t\,\text{Re\,z}} \left|\overline{Rf} (z,\cdot)\right| \,|\d z| \right\|_{L^{\infty}(K;\,\m)}  <\infty, \label{eq:unkn1}\\
&\left\|\overline{Rf}(\cdot,\cdot)\right\|_{L^\infty((L \cap \Gamma_{\theta,\varepsilon})\times K;\, |\d z|\otimes \m)}<\infty \label{eq:unkn2}.
\end{align}

To confirm \eqref{eq:unkn2}, we provide the following criterion as a proposition. 

\begin{prop}\label{prop:rstFeller2}
Fix $p \in [1,+\infty)$ as above, and assume that $(A,\text{\rm Dom}(A))$ is a sectorial operator on $L^p(E;\,\m)$ with  constants $\theta \in (\pi /2, \pi)$ and $M \in (0,+\infty)$. Let  $\varepsilon \in (0,\theta -\pi/2)$, and assume in addition that  $L^p(E;\,\m) \cap L^\infty(E;\m)$ is invariant under $R(z;A)$  
 for any  $z \in \Gamma_{\theta,\varepsilon}$. Then, for any $f \in L^p(E;\m) \cap L^\infty(E;\m),$ the map $\Gamma_{\theta,\varepsilon} \ni z \mapsto \|R(z;A)f\|_{L^\infty(E;\,\m)}$ is continuous. 
\end{prop}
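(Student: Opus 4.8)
The plan is to prove the stronger statement that, for fixed $f \in L^p(E;\,\m)\cap L^\infty(E;\,\m)$, the map $z \mapsto R(z\,;A)f$ is continuous from $\Gamma_{\theta,\varepsilon}$ into $L^\infty(E;\,\m)$; the assertion then follows at once from the reverse triangle inequality $\bigl|\,\|R(z\,;A)f\|_{L^\infty(E;\,\m)} - \|R(z_0\,;A)f\|_{L^\infty(E;\,\m)}\,\bigr| \le \|R(z\,;A)f - R(z_0\,;A)f\|_{L^\infty(E;\,\m)}$. First I would introduce the Banach space $X := L^p(E;\,\m)\cap L^\infty(E;\,\m)$ equipped with the norm $\|g\|_X := \|g\|_{L^p(E;\,\m)} + \|g\|_{L^\infty(E;\,\m)}$; it is complete because $L^p$- and $L^\infty$-convergence each force almost everywhere convergence along a subsequence, so the two limits agree. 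The embedding $X \hookrightarrow L^p(E;\,\m)$ is contractive. By the invariance hypothesis, $R(z\,;A)$ maps $X$ into $X$ for every $z \in \Gamma_{\theta,\varepsilon}$, and I would verify by the closed graph theorem that this restriction is bounded on $X$: if $g_n \to g$ and $R(z\,;A)g_n \to h$ in $X$, then $g_n \to g$ in $L^p(E;\,\m)$, whence $R(z\,;A)g_n \to R(z\,;A)g$ in $L^p(E;\,\m)$, forcing $h = R(z\,;A)g$.

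Next, fixing $z_0 \in \Gamma_{\theta,\varepsilon}$ and writing $C_0 := \|R(z_0\,;A)\|_{X\to X}$, I would transport the resolvent (Neumann) expansion to $X$. From $z - A = (z_0 - A)\bigl(I + (z-z_0)R(z_0\,;A)\bigr)$ one obtains the identity
\[
R(z\,;A) = \sum_{n=0}^{\infty} (z_0 - z)^n R(z_0\,;A)^{n+1},
\]
valid as bounded linear operators on $L^p(E;\,\m)$ for $|z - z_0|$ small. Because $\|R(z_0\,;A)^{n+1}\|_{X\to X} \le C_0^{\,n+1}$, the same series converges in the operator norm of $X$ whenever $|z - z_0|\,C_0 < 1$; applying it to $g \in X$ gives $X$-convergent partial sums, which are in particular $L^p$-convergent, so their sum coincides with $R(z\,;A)g$ by the contractive embedding. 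This shows that $R(z\,;A)$ maps $X$ into $X$ with $\|R(z\,;A)\|_{X\to X} \le C_0/(1 - |z-z_0|C_0)$ on a disc about $z_0$; that is, the $X$-operator norm is locally bounded (in fact $z \mapsto R(z\,;A)$ is $X$-operator-norm analytic there).

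Finally, for $f \in X$ and $|z - z_0|\,C_0 < 1$ I would estimate, using the tail of the series,
\[
\|R(z\,;A)f - R(z_0\,;A)f\|_{L^\infty(E;\,\m)} \le \|R(z\,;A)f - R(z_0\,;A)f\|_X \le \sum_{n=1}^{\infty} |z - z_0|^n C_0^{\,n+1}\,\|f\|_X,
\]
and the right-hand side tends to $0$ as $z \to z_0$. This yields continuity of $z \mapsto R(z\,;A)f$ into $L^\infty(E;\,\m)$ at $z_0$, and since $z_0 \in \Gamma_{\theta,\varepsilon}$ was arbitrary, the displayed reverse triangle inequality gives the continuity of $z \mapsto \|R(z\,;A)f\|_{L^\infty(E;\,\m)}$ on $\Gamma_{\theta,\varepsilon}$.

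I expect the main obstacle to be the transfer of the resolvent calculus from $L^p(E;\,\m)$, where analyticity of $z \mapsto R(z\,;A)$ is available, to the mixed space $X = L^p(E;\,\m)\cap L^\infty(E;\,\m)$: the delicate points are establishing boundedness of $R(z\,;A)$ on $X$ (where only invariance, and \emph{not} an a priori $L^\infty$ norm bound, is assumed) via the closed graph theorem, and checking that the Neumann series---known to converge among bounded operators on $L^p(E;\,\m)$---also converges in the $X$-operator norm and still represents $R(z\,;A)$ there. Everything after that reduces to a routine geometric-series tail estimate.
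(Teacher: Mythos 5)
Your proposal is correct, and its backbone coincides with the paper's proof: both turn $L^p(E;\,\m)\cap L^\infty(E;\,\m)$ into a Banach space (your sum norm versus the paper's max norm $\|f\|_{p,\infty}=\|f\|_{L^p(E;\,\m)}\lor\|f\|_{L^\infty(E;\,\m)}$, which are equivalent) and then invoke the closed graph theorem to upgrade the bare invariance hypothesis to a bound $\|R(z_0\,;A)\|_{X\to X}\le C_0$ --- this is the one non-routine step, and you both have it. Where you genuinely diverge is the second half. The paper applies the resolvent equation \emph{once}, controls the resulting $L^p$-term by the sectorial estimate $\|R(z\,;A)f\|_{L^p(E;\,\m)}\le M\|f\|_{L^p(E;\,\m)}/|z|$, and absorbs the $L^\infty$-term into the left-hand side on the disc $C_{z_0}|z-z_0|<1/2$; you instead sum the full Neumann series $\sum_{n\ge 0}(z_0-z)^n R(z_0\,;A)^{n+1}$ in the algebra of bounded operators on $X$. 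Your route buys a bit more: local analyticity of $z\mapsto R(z\,;A)$ in the $X$-operator norm, hence local Lipschitz continuity into $L^\infty(E;\,\m)$, and it never uses the quantitative sectorial bound at all, only that $\Gamma_{\theta,\varepsilon}\subset\rho(A)$ and that $R(z_0\,;A)$ is bounded on $L^p(E;\,\m)$; the paper's absorption argument is essentially the one-step truncation of your series and is correspondingly shorter. One point you should make explicit: to identify the $X$-convergent sum with $R(z\,;A)g$ you need $|z-z_0|$ smaller than both $1/C_0$ and $1/\|R(z_0\,;A)\|_{L^p\to L^p}$ (the two operator norms are not comparable a priori, since neither space embeds in the other with norm control in both directions), so the argument should be run on the smaller of the two discs; this costs nothing, as you only need $z\to z_0$.
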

\begin{proof}
For $f \in L^p(E;\,\m) \cap L^{\infty}(E;\,\m)$, we set $\|f\|_{p,\infty}=\|f\|_{L^p(E;\,\m)}\lor \|f\|_{L^{\infty}(E;\,\m)}$. Then, $L^p(E;\,\m) \cap L^{\infty}(E;\,\m)$ is a Banach space under $\|\cdot\|_{p,\infty}$.  We fix $\varepsilon \in (0,\theta -\pi/2)$ and $z \in \Gamma_{\theta,\varepsilon}$. By noting that $R(z;A)$ is a bounded linear operator on $L^p(E;\,\m)$ and makes invariant $L^p(E;\,\m) \cap L^\infty(E;\,\m)$, we immediately find that $R(z;A)$ is a closed operator on $L^p(E;\,\m) \cap L^{\infty}(E;\,\m)$. Hence,  the closed graph  theorem implies that there exists $C_z>0$ such that  for any $f \in L^p(E;\,\m) \cap L^{\infty}(E;\,\m)$,
\[
\|R(z;A)f\|_{p,\infty} \le C_z\|f\|_{p,\infty}.
\]
In particular, we have that for $f \in L^p(E;\,\m) \cap L^{\infty}(E;\m)$,
\begin{align}\label{eq:brz}
\|R(z;A)f\|_{p,\infty} \le C_z(\|f\|_{L^p(E;\,\m)}+\|f\|_{L^{\infty}(E;\,\m)}).
\end{align}
We fix $f \in L^p(E;\,\m) \cap L^{\infty}(E;\m)$ and $z_0 \in \Gamma_{\theta,\varepsilon}$.  From the resolvent equation and \eqref{eq:brz}, we obtain that for any $z \in \Gamma_{\theta,\varepsilon}$,
\begin{align}
&\|R(z;A)f-R(z_0;A)f\|_{L^\infty(E;\,\m)} \le |z-z_0|\|R(z_0;A)R(z;A)f\|_{L^\infty(E;\,\m)} \notag \\
&\hspace{2cm}\le C_{z_0}|z-z_0|(\|R(z;A)f\|_{L^p(E;\,\m)}+\|R(z;A)f\|_{L^\infty(E;\,\m)}) 
\label{eq:brz2} \\
&\hspace{2cm}\le C_{z_0}\frac{M|z-z_0|}{|z|}\|f\|_{L^p(E;\,\m)}+C_{z_0}|z-z_0|\|R(z;A)f\|_{L^\infty(E;\,\m)}.\notag
\end{align}
In the last line, we use the fact that $A$ is a sectorial operator.
Therefore, for any $z \in  \Gamma_{\theta,\varepsilon}$ with $C_{z_0}|z-z_0|<1/2$, 
\begin{equation}
\frac{1}{2}\left\| R(z;A)f\right\| _{L^\infty(E;\,\m)} \le \|R(z_0;A)f\|_{L^\infty(E;\,\m)}+C_{z_0}\frac{M|z-z_0|}{|z|}\|f\|_{L^p(E;\,\m)}.\label{eq:brz3}
\end{equation}
By using \eqref{eq:brz2} and \eqref{eq:brz3}, we have for any $z \in  \Gamma_{\theta,\varepsilon} \text{ with }C_{z_0}|z-z_0|<1/2$,
\begin{align*}
&\|R(z;A)f-R(z_0;A)f\|_{L^{\infty}(E;\,\m)} \\
&\hspace{1cm}\le C_{z_0}\frac{M|z-z_0|}{|z|}\|f\|_{L^p(E;\,\m)}+2C_{z_0}|z-z_0||R(z_0;A)f\|_{L^\infty(E;\,\m)}\\
&\hspace{2cm} +2C_{z_0}^2\frac{M|z-z_0|^2}{|z|}\|f\|_{L^p(E;\,\m)}.
\end{align*}
This shows that the map $z \mapsto \|R(z;A)\|_{L^{\infty}(E;\,\m)}$ is continuous at $z_0$.
\end{proof}

\begin{thm}\label{thm:equi}
Fix $p \in [1,+\infty)$ as above, and assume the conditions in Proposition~\ref{prop:rstFeller},  \eqref{eq:unkn1} and \eqref{eq:unkn2}. Assume in addition that  the transition kernel of ${\bf X}$ is absolutely continuous with respect to $\m$.
Then, for any $t>0$ and  $f \in  L^p(E;\m)\cap \mathscr{B}_b(E)$, we have  $P_t f \in C_b(E)$. In particular, $\{P_t\}_{t \ge 0}$ has the strong Feller property if $P_t\bone_E \in C_b(E)$ for any $t>0$.
\end{thm}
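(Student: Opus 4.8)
The plan is to manufacture a genuine continuous version of $P_tf=T_tf$ out of the Dunford representation and then to promote the resulting $\m$-a.e.\ identity to a pointwise one by means of the absolute continuity of the transition kernel. Fix $s\in(0,t]$ and $f\in L^p(E;\m)\cap\mathscr{B}_b(E)$, and set
\[
g_s(x):=\frac{1}{2\pi\iup}\int_{\Gamma_{\theta,\varepsilon}}\eup^{zs}\,\overline{Rf}(z,x)\,\d z ,
\]
with $\overline{Rf}$ the Borel function supplied by Proposition~\ref{prop:rstFeller}. By \eqref{eq:unkn1} the integrand is absolutely integrable for $\m$-a.e.\ $x$ in any compact set, so $g_s$ is well defined off an $\m$-null set. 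Pairing with an arbitrary $h\in L^{p/(p-1)}(E;\m)$ and using Fubini's theorem together with property~\ref{prop:rstFeller1}, the $z$-integral of $\int_E\overline{Rf}(z,\cdot)h\,\d\m=\int_E R(z\,;A)f\,h\,\d\m$ reproduces, via the Dunford formula of Proposition~\ref{prop:anal}, the quantity $\int_E(T_sf)\,h\,\d\m$; hence $g_s=T_sf=P_sf$ $\m$-a.e.

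Next I would show that $g_s$ admits a continuous version. Split $\Gamma_{\theta,\varepsilon}$ into the bounded part $\Gamma_{\theta,\varepsilon}\cap\{|z|\le R\}$ and the tail $\Gamma_{\theta,\varepsilon}\cap\{|z|>R\}$, $R\ge\varepsilon$. On the tail the rays carry $\arg z=\pm(\theta-\varepsilon)$ with $\cos(\theta-\varepsilon)<0$, so $\eup^{(s/2)\Re z}\le\eup^{(s/2)R\cos(\theta-\varepsilon)}$; combined with \eqref{eq:unkn1} applied at $s/2$ this bounds the tail contribution to $g_s$ by a constant times $\eup^{(s/2)R\cos(\theta-\varepsilon)}$, uniformly in $x$ over a fixed compact $K$ (off a null set), whence the truncations converge to $g_s$ uniformly on compacta as $R\to\infty$. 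For a fixed truncation, property~\ref{prop:rstFeller4} gives, for $|\d z|$-a.e.\ $z$, equicontinuity in $x\in K\setminus N$; since \eqref{eq:unkn2} furnishes an $|\d z|$-integrable dominating function on the (finite-length) compact piece, dominated convergence upgrades this to uniform continuity of the truncation on $K\setminus N$. A uniform limit of uniformly continuous functions being uniformly continuous, $g_s$ is uniformly continuous on $K\setminus N'$ for some $\m$-null $N'$; as $\m$ has full support, $N'$ has dense complement, so $g_s$ extends to $\widetilde g_s\in C(E)$ with $\widetilde g_s=P_sf$ $\m$-a.e. Boundedness by $\|f\|_\infty$ follows from sub-Markovianity, so $\widetilde g_s\in C_b(E)$.

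The delicate point is to pass from ``$P_tf=\widetilde g_t$ $\m$-a.e.'' to ``$P_tf\in C_b(E)$''. Here I would use the semigroup property and absolute continuity. Write $u:=\widetilde g_{t/2}\in C_b(E)\cap L^p(E;\m)$. Since $P_{t/2}f=u$ $\m$-a.e.\ and $P_{t/2}(x,\cdot)\ll\m$,
\[
P_tf(x)=\int_E P_{t/2}f\,\d P_{t/2}(x,\cdot)=\int_E u\,\d P_{t/2}(x,\cdot)=P_{t/2}u(x)\qquad\text{for every }x .
\]
Now apply the construction above to $u$ (legitimate since $u\in C_b(E)\cap L^p(E;\m)$ and condition~(ii) together with \eqref{eq:unkn1}, \eqref{eq:unkn2} cover it): there is $\widetilde h\in C_b(E)$ with $P_{t/2}u=\widetilde h$ $\m$-a.e. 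For $\sigma>0$, absolute continuity of $P_\sigma(x,\cdot)$ gives $P_{(t/2)+\sigma}u(x)=\int_E P_{t/2}u\,\d P_\sigma(x,\cdot)=\int_E\widetilde h\,\d P_\sigma(x,\cdot)=P_\sigma\widetilde h(x)\to\widetilde h(x)$ as $\sigma\downarrow0$, by right continuity of the paths and $\widetilde h\in C_b(E)$. On the other hand, because $u$ is \emph{continuous}, $P_\sigma u\to u$ pointwise and boundedly, so $P_{(t/2)+\sigma}u(x)=\int_E P_\sigma u\,\d P_{t/2}(x,\cdot)\to P_{t/2}u(x)$ as $\sigma\downarrow0$ by dominated convergence. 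Comparing the two limits yields $P_{t/2}u=\widetilde h$ \emph{everywhere}, so $P_tf=P_{t/2}u=\widetilde h\in C_b(E)$. It is precisely the continuity of the intermediate version $u$ that breaks the apparent circularity, since for merely Borel $f$ one only has $P_\sigma f\to f$ almost everywhere.

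Finally, for the ``in particular'' clause, fix $f\in\mathscr{B}_b(E)$ and an exhaustion $K_n\uparrow E$ by compacta; then $f_n:=f\,\bone_{K_n}\in L^p(E;\m)\cap\mathscr{B}_b(E)$, so $P_tf_n\in C_b(E)$ by the main assertion. From $|P_tf-P_tf_n|\le\|f\|_\infty\,(P_t\bone_E-P_t\bone_{K_n})$ and $P_t\bone_{K_n}\uparrow P_t\bone_E$, with $P_t\bone_{K_n},P_t\bone_E\in C_b(E)$, Dini's theorem yields uniform convergence on each compact set; hence $P_tf$ is a locally uniform limit of continuous functions, so $P_tf\in C_b(E)$ and $\{P_t\}_{t\ge0}$ is strong Feller. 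I expect the two main obstacles to be the uniform equicontinuity estimate of the second paragraph, where \eqref{eq:unkn1}, \eqref{eq:unkn2} and property~\ref{prop:rstFeller4} must be combined, and the everywhere-upgrade of the third paragraph.
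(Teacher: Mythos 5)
Your proof is correct and, for the main assertion, follows essentially the same route as the paper: the contour integral $\widehat{T}_tf(x)=\frac{1}{2\pi\iup}\int_{\Gamma_{\theta,\varepsilon}}e^{zt}\,\overline{Rf}(z,x)\,\d z$, the uniform-continuity estimate on $K\setminus N$ obtained by combining \eqref{eq:unkn1} (tail control), \eqref{eq:unkn2} (an $L^\infty$ dominating bound on the compact piece) and Proposition~\ref{prop:rstFeller}~\ref{prop:rstFeller4}, the extension to a continuous version by density of $K\setminus N$ in $K$ (full support of $\m$), and finally the upgrade $P_tf=P_{t/2}(\text{continuous version of }P_{t/2}f)$ via absolute continuity of the transition kernel. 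Your ``comparison of two limits'' of $P_{(t/2)+\sigma}u$ as $\sigma\downarrow0$, which exploits the continuity of the intermediate function $u$, is precisely the paper's chain $P_tg=\lim_{s\to0}P_{s+t}g=\lim_{s\to0}P_s(\widehat{T}_tg)=\widehat{T}_tg$ for $g\in L^p(E;\,\m)\cap C_b(E)$, just written out for $g=u$; your remark that continuity of $u$ is what breaks the circularity is exactly the point the paper relies on.

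The one place you genuinely depart from the paper is the ``in particular'' clause. You truncate by compacta, $f_n=f\bone_{K_n}\in L^p(E;\,\m)\cap\mathscr{B}_b(E)$, bound $|P_tf-P_tf_n|\le\|f\|_\infty\,(P_t\bone_E-P_t\bone_{K_n})$, and apply Dini's theorem to $P_t\bone_{K_n}\uparrow P_t\bone_E$ to get locally uniform convergence, hence continuity of $P_tf$. The paper instead takes a nonnegative $f$, a monotone approximation $f_n\uparrow f$ in $L^p(E;\,\m)\cap\mathscr{B}_b(E)$, deduces lower semicontinuity of $P_tf$ and of $P_t(\|f\|_\infty-f)$, and uses continuity of $P_t\bone_E$ to sandwich $P_tf$ between a lower and an upper semicontinuous function, finally splitting a general $f$ into positive and negative parts. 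Both arguments are correct and use the same two ingredients (the main assertion plus $P_t\bone_E\in C_b(E)$); yours yields locally uniform convergence directly and avoids the positive/negative decomposition, while the paper's avoids invoking Dini's theorem.
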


\begin{proof}
For the moment, we fix $f\in {\mathscr B}_b(E) \cap L^p(E;\,\m)$ and $t>0$.  For $x \in E$, we define
\[
\widehat{T}_{t} f (x):= \frac{1}{2\pi {\rm i}} \int _{\Gamma _{\theta, \varepsilon}} e^{z t} \overline{Rf} (z,x) \,\d z,
\]
where $\overline{Rf}$ is the function obtained in Proposition~\ref{prop:rstFeller}.
Note that the line integral is well-defined by Proposition~\ref{prop:rstFeller}~\ref{prop:rstFeller3}. Let $K$ be a compact subset of $E$. 

Then, by \eqref{eq:unkn1}, there exists an $\m$-null subset $N_0$ of $E$ such that 
\begin{align*}
\sup_{x,y\in K\setminus N_0}
\int_{\Gamma_{\theta,\varepsilon}}e^{t\,\text{Re\,z}} \left|\overline{Rf} (z,x)
-\overline{Rf} (z,y)\right| \,|\d z|<\infty.
\end{align*}
Let $N$ be an $\m$-null set constructed in Proposition~\ref{prop:rstFeller}. 
We may assume $N_0\subset N$. 
Then, for any $\delta>0$, there exists a compact subset $L \subset \C$ such that 
\begin{align*}
&\frac{1}{2\pi}\sup_{x,y\in K\setminus N}\int_{\Gamma_{\theta,\varepsilon}}e^{t\,\text{Re\,z}} \left|\overline{Rf} (z,x)
-\overline{Rf} (z,y)\right| \,|\d z|\notag\\
&\leq \frac{1}{2\pi}
\sup_{x,y\in K\setminus N}\int_{\Gamma_{\theta,\varepsilon}\cap L}e^{t\,\text{Re\,z}} \left|\overline{Rf} (z,x)
-\overline{Rf} (z,y)\right| \,|\d z|+\delta.
\end{align*}
For $x,y \in K\setminus N$ and $j \in \N$,
\begin{align}
&\left| \widehat{T}_{t} f (x) -\widehat{T}_{t} f (y) \right| \le \frac{1}{2\pi } \int _{\Gamma _{\theta, \varepsilon}} e^{t {\rm Re}\,z} \left|\overline{Rf} (z,x) - \overline{Rf} (z,y)\right| |\d z| \notag \\
&\hspace{0.5cm}\le \frac{1}{2\pi }\sup_{x,y\in K\setminus N} \int _{\Gamma _{\theta, \varepsilon}\cap L} e^{t {\rm Re}\,z} \left|\overline{Rf} (z,x) - \overline{Rf} (z,y)\right| |\d z| +\delta \label{eq:j} \\
&\hspace{0.5cm}\le \frac{\delta|L|}{2\pi}\times 
e^{t\varepsilon}
+\frac{|L \cap \Gamma_{\theta,\varepsilon,\delta,j}|}{\pi}
 \times 
 e^{t\varepsilon}
 \times
 \left\|\overline{Rf}(\cdot,\cdot)\right\|_{L^\infty((L \cap \Gamma_{\theta,\varepsilon})\times K;\, |\d z|\otimes \m)} +\delta, \notag
\end{align}
where
$\Gamma_{\theta,\varepsilon,\delta,j}=\{z  \in \Gamma _{\theta, \varepsilon} \;|\; \sup_{x,y \in K \setminus N,\, d(x,y)<1/j} |\overline{Rf} (z,x) - \overline{Rf} (z,y)|>\delta \}.$
By letting $j \to \infty$ in \eqref{eq:j}, the Markov inequality, \eqref{eq:unkn2},  and Lebesgue convergence theorem together lead us to 
\[
\varlimsup_{j \to \infty}\sup_{x,y \in K \setminus N,\, d(x,y)<1/j}\left| \widehat{T}_{t} f (x) -\widehat{T}_{t} f (y) \right| \le \frac{\delta |L|}{2\pi}
\times e^{t\varepsilon}+\delta.
\]
Since $\delta$ is arbitrarily chosen, we see that $\widehat{T}_{t}f$ is uniformly continuous on $K \setminus N$. Because $K \setminus N$ is a dense subset of $K$, we find that $\widehat{T}_{t}f$ is extended to  a continuous function on $K$, which is denoted by the same symbol. Since $K$ is arbitrary, $\widehat{T}_{t}f$ is continuous on $E$.
From Proposition~\ref{prop:anal} and Proposition~\ref{prop:rstFeller}~\ref{prop:rstFeller1}, 
\begin{equation}\label{eq:v}
P_{t}f(x) = T_{t} f(x) = \widehat{T}_{t} f(x) \quad \m\text{-a.e.}\ x \in E.
\end{equation}
From this, $\widehat{T}_tf$ is bounded on $E$. In particular, if $f \in  L^p(E;\m)\cap C_b(E)$, the absolute continuity  and the sample path (right-)continuity  imply
\[
P_tf=\lim_{s \to 0} P_{s+t} f=\lim_{s \to 0} P_{s}( P_{t} f)=\lim_{s \to 0}P_{s}(\widehat{T}_{t} f)=\widehat{T}_{t} f.
\]
This implies that $P_t$ maps any function in $L^p(E;\m)\cap C_b(E)$ to $C_b(E)$. Even if $f \in   L^p(E;\,\m)\cap \mathscr{B}_b(E) $, we already know $\widehat{T}_tf \in  L^p(E;\m)\cap C_b(E)$.  Therefore, by using the $C_b$-Feller property, \eqref{eq:v}, and the absolute continuity, we obtain that for any $t>0$,
\[
P_tf=P_{t/2}(P_{t/2}f)=P_{t/2}(\widehat{T}_{t/2}f ) \in C_b(E).
\]
This implies that $P_t$ maps any function in $ L^p(E;\m)\cap \mathscr{B}_b(E)$ to $C_b(E)$.

``In particular'' part is proved as follows. We take a non-decreasing sequence $\{f_n\}_{n=1}^\infty \subset L^p(E;\,\m)\cap {\mathscr B}_b(E)$ such that $\lim_{n \to \infty}f_n(x)=f(x)$ for any $x \in E$. By the non-negativity of $f$ and the monotone convergence theorem, it holds that
\[
P_t f(x) = \bE_x\left[\lim_{n \to \infty}f_n(X_t) \right] = \lim_{n \to \infty} \bE_x[f_n(X_t)] =\lim_{n \to \infty} P_t f_n(x), \quad x\in E.
\]
Hence, we have the lower semicontinuity of $P_t f$ from the continuity of $P_t f_n$, $n \in {\mathbb N}$. 
Since the function $\|f\|_{\infty}-f$ is also a nonnegative bounded Borel function, 
we have that $P_t (\| f\| _\infty - f)$ is lower semicontinuous on $E$.
From this fact and the assumption that $P_t\bone_E \in C_b(E)$ for any $t>0$,  we find that  $P_t f$ is upper semicontinuous. Thus $P_t$ maps any non-negative function in $\mathscr{B}_b(E)$ to $C_b(E)$. For general $f \in {\mathscr B}_b(E)$, decomposing $f$ into $f_+ := f\bone_{\{ f>0\}}$ and $f_- := -f\bone_{\{ f\le 0\}}$ and applying the result above to $f_+$ and $f_-$, we obtain the semigroup strong Feller property.
\end{proof}

In the case that $\{ T_t\}_{t \ge 0}$ is not an analytic semigroup on $L^p(E;\,\m)$, even if $\{ R_\alpha\}_{\alpha>0}$ has the strong Feller property, $\{ P_t\}_{t \ge 0}$ may not be strong Feller. To see this, let $\{P_t\}_{t \ge 0}$ be the semigroup of the space-time Brownian motion:
\begin{align*}
P_tf(x,\tau):=\bE_{x}^{(1)}\otimes \bE_{\tau}^{(2)}\left[f(B_t,t)\right]\quad f \in \mathscr{B}(\R^2),\, (t,x)\in \R^2,\,t>0.
\end{align*}
Here, $(\{B_t\}_{t \ge 0},\{\bP_x^{(1)}\}_{x \in \R})$ is a one-dimensional Brownian motion, and $P_{\tau}^{(2)}$ denotes the law of uniform motion to the right starting from $\tau \in \R$ with unit speed. 
It is known that the semigroup $\{ P_t\}_{t \ge 0}$  does not have the strong Feller property, but the associated resolvent  has the strong Feller property (see Remark 1.1 in \cite{KKT}).
Now we let  $p\in [1,+\infty )$ 
and $\m$ be the Lebesgue measure on ${\R}^2$, and see the fact that the semigroup $\{ T_t\}_{t \ge 0}$ on $L^p({\R}^2; \m)$ which is generated by the operator
\[
A= \frac 12 \frac{\partial ^2}{\partial x_1 ^2} + \frac{\partial}{\partial x_2}, \quad x=(x_1,x_2),
\]
is not an analytic semigroup. 
It is easy to see that $\m$ is the invariant measure of $\{ T_t\}_{t \ge 0}$, and $\{ T_t\}_{ t \ge 0}$ is the transition semigroup generated by $\{(B_t ,t)\}_{t \ge 0}$.
Let $a\in {\R}$ and
\[
f_n(x_1,x_2) := \left( \frac{p}{\pi n}\right) ^{1/p} \exp \left({\rm i}\, a x_2 - \frac{x_1^2 + x_2^2}n\right) , \quad (x_1,x_2) \in {\R}^2
\]
for $n\in {\mathbb N}$.
Then,
\[
\int _{{\mathbb R}^2}\left| f_n\right|^p \d\m = \frac{p}{\pi n} \left( \int _{\mathbb R} \exp \left(- \frac{p y^2}n\right) \d y\right) ^2  =1
\]
and
\begin{align*}
\int _{{\mathbb R}^2}&\left| Af_n - {\rm i}\, a f_n\right|^p \d\m \\
&= \frac{p}{\pi n}\int _{{\R}^2}\left| \left( \frac 1n +\frac{2x_1^2}{n^2} + \frac{2x_2}{n}\right) \exp \left({\rm i}\, a x_2 -\frac{x_1^2 + x_2^2}n\right) \right| ^p \m (\d x) \\
&= \frac{p}{\pi n^{p/2}} \int _{{\R}^2} \left( \frac 1{\sqrt n} +\frac{2\xi _1^2}{\sqrt{n}} + 2\xi _2\right) ^p \exp \left( -p (\xi_1^2 + \xi_2^2) \right) \m (\d\xi) \\
&\rightarrow 0, \quad n\rightarrow \infty .
\end{align*}
These imply that ${\rm i}\, a \not \in \rho (A)$ for all $a\in {\R}$.
Therefore, $A$ is not a sectorial operator, and equivalently, $\{ T_t\}_{t \ge 0}$ is not analytic on $L^p({\R}^2;\m)$.

\section{Application to Markov processes associated with lower bounded semi-Dirichlet forms}\label{sec:appSemiDir}

Throughout this section, we assume that 
$(E,d)$ is a locally compact separable metric space with its one point compactification $E_{\partial}$ and $\m$ is a positive Radon measure on $E$ with full support, and 
${\bf (A2)}$ is satisfied. 
By Theorem~\ref{thm:anal} b),
for any $p \in [2,+\infty)$, 
the semigroup 
$\{T_t\}_{t\geq0}$
is analytic on $L^p(E;\,\m)$ with a sector $S_{\delta'}$ for some $\delta' \in (0,\pi/2]$.
Therefore, 
the generator $(A,\text{Dom}(A))$ on $L^p(E;\,\m)$ is also sectorial with some constants $\theta_p \in (\pi/2,\pi)$ and $M_p \in (0,+\infty)$ depending on $p$. See \cite[Theorem~1.4.2]{Da0} for a quantitative bound of $\theta_p$.

As noted before, the semigroup $P_tu$ with $u\in L^2(E;\,\m)\cap\mathscr{B}_b(E)$ is a quasi-continuous $\m$-version of 
 $T_tu$ for $t>0$.   
On the basis of this fact, we follow 
 \cite[\S3.5, Theorem~3.5.4]{Oshima} to obtain the next proposition.

\begin{prop}\label{prop:density}
If the resolvent kernel of ${\bf X}$ is absolutely continuous with respect to $\m$, so is the transition kernel,
\[
P_t(x,\d y)=P_t(x,y)\,\m(\d y),\quad t>0,\, x \in E.
\]
\end{prop}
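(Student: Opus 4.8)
The plan is to show that the transition kernel charges no $\m$-null set. It suffices to prove that for every $x\in E$, $t>0$, and $B\in\mathscr{B}(E)$ with $\m(B)=0$ one has $P_t(x,B)=0$; absolute continuity of $P_t(x,\cdot)$ for every $x,t$ and the existence of a jointly measurable density $P_t(x,y)$ then follow by a routine Radon--Nikodym argument. So fix such a $B$ and a $t>0$.

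First I would extract from the form structure that the relevant exceptional set is small in the strong sense. Since $\m(B)=0$, the indicator $\bone_B$ vanishes in $L^2(E;\,\m)$, hence $T_r\bone_B=0$ for every $r>0$; as recalled just before the statement, $P_r\bone_B$ is a quasi-continuous $\m$-version of $T_r\bone_B$, so $P_r\bone_B=0$ $\m$-a.e. Because a quasi-continuous function vanishing $\m$-a.e. vanishes quasi-everywhere, the nearly Borel set $N:=\{x\in E : P_{t/2}\bone_B(x)\neq 0\}$ has \emph{zero capacity}, not merely zero $\m$-measure. This is precisely the step at which the lower bounded semi-Dirichlet form structure is used, and it is what the uniform motion to the right lacks.

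The decisive input is that, under the absolute continuity of the resolvent kernel, a set of zero capacity is polar and is avoided by $\mathbf{X}$ from \emph{every} starting point. Following \cite[\S3.5]{Oshima}, the hypothesis $R_\alpha(x,\cdot)\ll\m$ gives $R_\alpha(x,N)=0$ for all $x$ whenever $\m(N)=0$, and for a set of zero capacity this upgrades to $\bP_x(X_r\in N\text{ for some }r>0)=0$ for every $x\in E$; in particular $\bP_x(X_{t/2}\in N)=0$ for all $x$. Granting this, the Markov property at time $t/2$ finishes the argument: for every $x\in E$,
\[
P_t\bone_B(x)=\bE_x\!\left[P_{t/2}\bone_B(X_{t/2})\right]=\bE_x\!\left[P_{t/2}\bone_B(X_{t/2})\,\bone_N(X_{t/2})\right]=0 ,
\]
since $P_{t/2}\bone_B$ vanishes off $N$ while $X_{t/2}\notin N$ almost surely. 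Hence $P_t(x,B)=0$ for all $x$ and $t$, which is the claim.

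The main obstacle is exactly the polarity assertion in the previous paragraph: passing from avoidance of a zero-capacity set for quasi-every starting point to avoidance from every starting point, i.e.\ the elimination of the exceptional set under the absolute continuity condition. This is where the hypothesis is used in full strength, and where the uniform motion to the right---which carries no associated capacity and does hit $\m$-null sets---is correctly excluded; I would import this fact from \cite[\S3.5, Theorem~3.5.4]{Oshima} rather than reprove it. The remaining measurability required to display the density $P_t(x,y)$ is routine once $P_t(x,\cdot)\ll\m$ holds for all $x$ and $t$.
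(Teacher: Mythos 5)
Your proposal is correct and follows essentially the same route as the paper: the paper derives this proposition from the fact that $P_tu$ is a quasi-continuous $\m$-version of $T_tu$ by following \cite[\S3.5, Theorem~3.5.4]{Oshima}, which is precisely the chain you spell out (quasi-continuity forces the exceptional set $N=\{P_{t/2}\bone_B\neq 0\}$ to have zero capacity, resolvent absolute continuity upgrades zero capacity to polarity from \emph{every} starting point, and the Markov property at time $t/2$ concludes). The only cosmetic difference is that you reconstruct the Oshima argument explicitly where the paper simply cites it, and you defer the joint measurability of the density $P_t(x,y)$ to a routine argument, which is also handled in the cited reference.
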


Now, we apply Theorem~\ref{thm:equi} to 
${\bf X}$ under ${\bf (A2)}$ for $p\in[2,+\infty)$. In the sequel, we write $R_\alpha(x,y)$ for the resolvent kernel $R_\alpha(x,\cdot)$ of ${\bf X}$ with respect to $\m$ (if it exists). 
\begin{thm}\label{thm:app}
Suppose that ${\bf (A2)}$ is satisfied.
Assume  that $\{ R_\alpha\}_{\alpha >0}$ has the strong Feller property, and there exist $C>0$, $\alpha>\alpha_0$, and $p>2$ such that
\begin{equation}\label{eq:thmsym10}
\| G_\alpha g \|_{L^\infty(E;\,\m)} \leq C(\| g\| _{L^p(E;\,\m)}+
\| g\| _{L^2(E;\,\m)}),\quad g\in L^p(E;\,\m)\cap L^2(E;\,\m).
\end{equation}
In addition, we assume either of the following conditions is satisfied:
\begin{enumerate}
\item[{\rm(i)}] $\m(E)$ is finite, 
\item[{\rm(ii)}] for any $t>0$, $P_t\bone_{E}$ is continuous on $E$.
\end{enumerate}
Then, the semigroup of ${\bf X}$ has the strong Feller property.
\end{thm}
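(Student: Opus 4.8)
The plan is to verify the hypotheses of Theorem~\ref{thm:equi} and then invoke it. We work on $L^p(E;\,\m)$ for the $p>2$ given in the statement, where by Theorem~\ref{thm:anal}~b) the semigroup $\{T_t\}_{t\ge 0}$ is analytic and hence $(A,\mathrm{Dom}(A))$ is sectorial with some constants $\theta_p\in(\pi/2,\pi)$ and $M_p\in(0,+\infty)$; this immediately supplies hypothesis (i) of Proposition~\ref{prop:rstFeller}. Hypothesis (iii) of that proposition is precisely the assumed resolvent strong Feller property. The remaining conditions to establish are hypothesis (ii) of Proposition~\ref{prop:rstFeller} (that $R(z;A)f$ has a bounded continuous $\m$-version for $z\in\Gamma_{\theta,\varepsilon}$ and $f\in C_b(E)\cap L^p(E;\,\m)$), together with the two local uniform estimates \eqref{eq:unkn1} and \eqref{eq:unkn2}. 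The absolute continuity of the transition kernel needed in Theorem~\ref{thm:equi} follows from Proposition~\ref{prop:density}, since (RSF) gives absolute continuity of the resolvent kernel by Lemma~\ref{lem:density}~b).

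\medskip

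First I would use the ultracontractivity-type bound \eqref{eq:thmsym10} to control $\|R(z;A)\|$ into $L^\infty$. Fix $\varepsilon\in(0,\theta_p-\pi/2)$ and $z\in\Gamma_{\theta_p,\varepsilon}$. For $g\in L^p(E;\,\m)\cap L^2(E;\,\m)$ the resolvent $R(z;A)g$ can be compared to $G_\alpha g$ using the resolvent equation, so that \eqref{eq:thmsym10} yields a bound of the form $\|R(z;A)g\|_{L^\infty(E;\,\m)}\le C_z(\|g\|_{L^p(E;\,\m)}+\|g\|_{L^2(E;\,\m)})$, showing in particular that $R(z;A)$ maps $L^p\cap L^\infty$ into $L^\infty$. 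This gives the invariance hypothesis of Proposition~\ref{prop:rstFeller2}, whose conclusion is the continuity of $z\mapsto\|R(z;A)f\|_{L^\infty(E;\,\m)}$ for $f\in L^p\cap L^\infty$. The continuity of this map on the compact arc $L\cap\Gamma_{\theta_p,\varepsilon}$ produces a finite uniform $L^\infty$ bound for $\overline{Rf}(z,\cdot)$ over that arc, which is exactly \eqref{eq:unkn2}. For \eqref{eq:unkn1} I would combine the sectorial estimate $\|R(z;A)\|_{p\to p}\le M_p/|z|$, which controls the tail of the contour integral where $\mathrm{Re}\,z\to-\infty$, with the $L^\infty$ bound just obtained on the bounded part of the contour; the exponential factor $e^{t\,\mathrm{Re}\,z}$ decays on the two rays $|\arg\lambda|=\theta_p-\varepsilon$ since $\theta_p-\varepsilon>\pi/2$ forces $\mathrm{Re}\,z<0$ there, making the integral absolutely convergent.

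\medskip

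Next I would verify hypothesis (ii) of Proposition~\ref{prop:rstFeller}, namely that $R(z;A)f$ admits a bounded continuous $\m$-version for $f\in C_b(E)\cap L^p(E;\,\m)$. For this I would write $R(z;A)f$ through the resolvent equation \eqref{eq:proprstFeller00} in terms of $R_1 f=R(1;A)f$, which is bounded and continuous by the resolvent strong Feller property, and then use that the correction term $(1-z)R(z;A)(R_1 f)$ lands in $L^\infty$ by the bound derived from \eqref{eq:thmsym10}; the continuity is then transferred from the resolvent kernel's strong Feller property. Once all the hypotheses of Theorem~\ref{thm:equi} are in place, it yields $P_t f\in C_b(E)$ for every $f\in L^p(E;\,\m)\cap\mathscr{B}_b(E)$ and $t>0$, and reduces full strong Feller to the continuity of $P_t\bone_E$. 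Under condition~(ii) this is assumed directly. Under condition~(i), $\m(E)<\infty$, the constant function $\bone_E$ lies in $L^p(E;\,\m)\cap\mathscr{B}_b(E)$, so $P_t\bone_E\in C_b(E)$ follows already from the first conclusion of Theorem~\ref{thm:equi}; invoking the ``in particular'' clause then gives (SF).

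\medskip

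I expect the main obstacle to be the passage from the single-point $L^\infty$ bound \eqref{eq:thmsym10} to the genuinely \emph{local uniform} estimates \eqref{eq:unkn1} and \eqref{eq:unkn2}, since these require uniformity in $z$ along the contour and simultaneous integrability against $e^{t\,\mathrm{Re}\,z}\,|\d z|$. The delicate point is that the constant $C_z$ coming from the closed graph theorem in Proposition~\ref{prop:rstFeller2} is a priori only finite for each fixed $z$, so one must upgrade it to a \emph{locally bounded} function of $z$; the continuity statement of Proposition~\ref{prop:rstFeller2} is precisely the tool that makes $\|R(z;A)f\|_{L^\infty(E;\,\m)}$ bounded on compact subarcs, and marrying this bounded-part control with the sectorial tail decay is the crux of the argument. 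The remaining steps — the resolvent-equation manipulations, the reduction of $P_t\bone_E$ under hypotheses (i)/(ii), and the appeal to Proposition~\ref{prop:density} for absolute continuity — are comparatively routine.
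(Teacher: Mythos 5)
Your overall plan --- verify the hypotheses of Proposition~\ref{prop:rstFeller} together with \eqref{eq:unkn1}--\eqref{eq:unkn2}, obtain absolute continuity of the transition kernel from Lemma~\ref{lem:density}~b) and Proposition~\ref{prop:density}, apply Theorem~\ref{thm:equi}, and dispose of cases (i)/(ii) through $P_t\bone_E$ --- is exactly the paper's, and your endgame (under (i), $\bone_E\in L^p\cap\mathscr{B}_b$ so the first conclusion of Theorem~\ref{thm:equi} applies) is correct. The execution, however, has a genuine gap caused by your choice of working space: you run everything on $L^p(E;\,\m)$ for the exponent $p>2$ from \eqref{eq:thmsym10}, but that inequality can only be fed functions lying in $L^p(E;\,\m)\cap L^2(E;\,\m)$. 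When $\m(E)=\infty$ (allowed under hypothesis (ii)), neither $C_b(E)\cap L^p(E;\,\m)$ nor $L^p(E;\,\m)\cap L^\infty(E;\,\m)$ is contained in $L^2(E;\,\m)$, so your claim that the resolvent-equation computation ``shows in particular that $R(z;A)$ maps $L^p\cap L^\infty$ into $L^\infty$'' is a non sequitur: for $g\in (L^p\cap L^\infty)\setminus L^2$ you cannot bound $\|G_\alpha R(z;A)g\|_{L^\infty(E;\,\m)}$, since \eqref{eq:thmsym10} requires the $L^2$-norm of $R(z;A)g$. Hence the invariance hypothesis of Proposition~\ref{prop:rstFeller2} is not verified, and condition (ii) of Proposition~\ref{prop:rstFeller} for the exponent $p$ is equally out of reach for $f\in C_b\cap L^p$ not in $L^2$. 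The paper avoids this by applying Proposition~\ref{prop:rstFeller} and Theorem~\ref{thm:equi} with exponent $2$: a \emph{bounded} function in $L^2$ is automatically in $L^p$ for $p>2$ (the implication goes in that direction only), and $A$ is sectorial on both $L^2$ and $L^p$ with common constants $\theta=\theta_2\wedge\theta_p$, $M=M_2\vee M_p$, so both norms of $R(z;A)f$ are controlled for $f\in L^2\cap C_b$.

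There is a second gap: even granting the invariance, the route through Proposition~\ref{prop:rstFeller2} cannot yield \eqref{eq:unkn1}. That proposition gives only continuity, hence boundedness of $z\mapsto\|R(z;A)f\|_{L^\infty(E;\,\m)}$ on compact sub-arcs, with no control whatsoever on its growth as $|z|\to\infty$ along $\Gamma_{\theta,\varepsilon}$. Your substitute for the tail, the sectorial estimate $\|R(z;A)\|_{p\to p}\le M_p/|z|$, controls $L^p$-norms only; via Fubini it bounds the $L^p(K;\,\m)$-norm of $\int e^{t\Re z}|\overline{Rf}(z,\cdot)|\,|\d z|$ over the tail, never the $L^\infty(K;\,\m)$-norm that \eqref{eq:unkn1} demands (pointwise values on the complement of a null set are not controlled by $L^p$ bounds). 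The repair is the paper's key computation \eqref{eq:thmsym40}: apply the resolvent equation once more, $R(z;A)f=G_\alpha f+(\alpha-z)G_\alpha R(z;A)f$, and combine \eqref{eq:thmsym10} with sectoriality on \emph{both} spaces to get the explicit estimate
\begin{equation*}
\|R(z;A)f\|_{L^\infty(E;\,\m)}\le C\left(1+\tfrac{M|\alpha-z|}{|z|}\right)\left(\|f\|_{L^p(E;\,\m)}+\|f\|_{L^2(E;\,\m)}\right),
\end{equation*}
which is \emph{uniform} over all of $\Gamma_{\theta,\varepsilon}$ because $|z|\ge\varepsilon$ there; then \eqref{eq:unkn1} and \eqref{eq:unkn2} are immediate (the contour integral of $e^{t\Re z}$ converges), the continuous $\m$-version required in Proposition~\ref{prop:rstFeller}(ii) is obtained by writing $R(z;A)f=R_\alpha f+(\alpha-z)R_\alpha\bigl(R(z;A)f\bigr)$ (note the bounded resolvent must sit \emph{outside}, which your decomposition achieves only after commuting the resolvents) and invoking (RSF), and neither Proposition~\ref{prop:rstFeller2} nor the closed graph theorem is needed at all. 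Ironically, your own first-step computation already produces exactly this constant; you only needed to track it and observe that $|\alpha-z|/|z|$ is bounded on the contour, instead of discarding it and falling back on a soft continuity argument.
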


\begin{proof}
In view of Theorem~\ref{thm:equi} with $p=2$, Lemma~\ref{lem:density}, and  Proposition~\ref{prop:density}, it suffices to confirm the conditions in Proposition~\ref{prop:rstFeller} with $p=2$, 
\eqref{eq:unkn1} and \eqref{eq:unkn2}. 

We take positive constants $C>0$, $\alpha>\alpha_0$, and $p>2$ so that \eqref{eq:thmsym10}  holds. On both $L^p(E;\,\m)$ and $L^2(E;\,\m)$, we see that $(A,\text{Dom}(A))$ is a sectorial operator with constants $\theta=\theta_2\land \theta_p \in (\pi/2,\pi)$ and $M=M_2\lor M_p$.  We fix $z \in S_\theta$ and $f\in L^2(E;\,\m)\cap  C_b(E)$.  Then, we have $f\in L^p(E;\,\m)\cap  C_b(E)$. From the resolvent equation and \eqref{eq:thmsym10}, 
\begin{align}
\| R(z\,;A)f \|_{L^\infty(E;\,\m)}&= \| R(\alpha\,;A) f+ (\alpha -z) R(\alpha\,;A) R(z\,;A) f\|_{L^\infty(E;\,\m)}  \notag \\
 &= \| G_\alpha f+ (\alpha -z)G_\alpha R(z\,;A) f\|_{L^\infty(E;\,\m)}  \notag \\
&\le  \| G_\alpha f\|_{L^\infty(E;\,\m)}+ \|(\alpha -z) G_\alpha R(z\,;A) f\|_{L^\infty(E;\,\m)} \notag \\
&\le C(\| f\|_{L^p(E;\,\m)}+\| f\|_{L^2(E;\,\m)})\notag \\
&\quad +C|\alpha-z| (\|R(z\,;A) f\|_{L^p(E;\,\m)}+\|R(z\,;A) f\|_{L^2(E;\,\m)})\notag \\
&\le  C(\| f\|_{L^p(E;\,\m)}+\| f\|_{L^2(E;\,\m)})\label{eq:thmsym40} \\
&\quad +C M\frac{|\alpha-z|}{|z|}(\|f\|_{L^p(E;\,\m)}+\|f\|_{L^2(E;\,\m)})<\infty. \notag
\end{align} 
By using the resolvent equation again, we have
\begin{equation}\label{eq:thmsym30}
R(z\,;A) f = R(\alpha \,;A) f + (\alpha -z) R(\alpha \,;A)  R(z\,;A) f \quad \text{ in }\quad  L^2(E;\,\m).
\end{equation}
Then, \eqref{eq:thmsym40} and \eqref{eq:thmsym30} imply that $R(z\,;A)f$ possesses a continuous $\m$-version on $E$. Because $\Gamma_{\theta,\varepsilon}\subset S_{\theta}$ for any $\varepsilon \in (0,\theta-\pi/2)$, the condition~(ii) in Proposition~\ref{prop:rstFeller} holds. The conditions \eqref{eq:unkn1} and \eqref{eq:unkn2} immediately follow from \eqref{eq:thmsym40}.
\end{proof}
\begin{rem}\label{rem:sobolev}
{\rm 
\begin{itemize}
\item[{\rm (i)}] Assume that ${\bf (A2)}$ is satisfied. 
Then, \cite[Corollary]{Fuk1977} (see also \cite[p.~98]{Oshima})  implies that \eqref{eq:thmsym10} holds for $p>(q/(q-2))\lor 2$ and $\alpha>\alpha_0$
if there exist $q>2$, $\alpha \ge \alpha_0$ and $S>0$ such that $L^q(E;\,\m) \subset \mathscr{F}$ and   
\begin{align}\label{eq:quc}
\|f\|_{L^q(E;\,\m)}^{2} \le S\mathscr{E}_{\alpha}(f,f),\quad f \in \mathscr{F}.
\end{align}
The inequality~\eqref{eq:quc} is often called the \emph{Sobolev type inequality}. 
Next we assume that $(\mathscr{E},\mathscr{F})$ is a symmetric Dirichlet form on $L^2(E;\,\m)$. Then 
\eqref{eq:quc} implies that the associated semigroup  $\{T_t\}_{t>0}$ is ultracontractive, i.e.,  each $T_t$ is extended to a bounded linear operator from $L^1(E;\m)$ to $L^\infty(E;\,\m)$, and more strongly, we have 
\begin{align}\label{eq:suc}
\|T_t\|_{1 \to \infty} \le Ct^{-q/(q-2)}e^{\alpha t},\quad t>0
\end{align}
for some positive constant $C>0$.  We also see from  \cite[Lemma~2.1.2]{Da} that for any $t>0$, $\|T_{t/2}\|_{2 \to \infty}^2=\|T_t\|_{1 \to \infty}$ under the symmetry of $\{T_t\}_{t>0}$.  It is also known that the ultracontractivity of the semigroup and \eqref{eq:suc} together imply \eqref{eq:quc}.
See \cite[Theorems~(2.1), (2.9), and  (2.16)]{CKS},  \cite[Theorem~4.2.7]{FOT}, or
\cite[Theorem~1]{Var} for the proof.

\item[{\rm (ii)}] 
When $(\mathscr{E},\mathscr{F})$ is a 
symmetric Dirichlet form on $L^2(E;\,\m)$,
the following condition implies \eqref{eq:quc} by \cite[Theorem~4.1(i)]{MoriLpKato}:
\begin{align}
\sup _{x\in E} \int _E R_\alpha (x,y)^q \,\m(\d y) <\infty\label{eq:quc*}
\end{align}
 for some $\alpha\in(0+\infty)$ and $q\in(1,+\infty)$. 
We also see that \eqref{eq:quc*} for such $\alpha,q$ yields that  
there exists $C\in (0,+\infty)$ such that  $\|G_\alpha f\|_{L^\infty(E;\,\m)} \le C \|f\|_{L^p(E;\,\m)}$ for any $f \in L^p(E;\,\m)$ with $p=q/(q-1)$.
\end{itemize}
}
\end{rem}

Even if $(\mathscr{E},\mathscr{F})$ is a symmetric Dirichlet form, it cannot be expected that \eqref{eq:thmsym10} follows from the condition that the associated semigroup  $\{T_t\}_{t>0}$ is  ultracontractive only. In fact, in this case, we do not know a quantitative estimate of $ \|T_t\|_{1 \to \infty} (= \|T_{t/2}\|_{2 \to \infty}^2)$ in $t>0$.  However, under the ultracontractivity, we can improve the proof of \cite[Proposition~3.4]{BKK} to obtain the same result as Theorem~\ref{thm:app}.

\begin{thm}\label{thm:bkk}
Suppose that ${\bf (A2)}$ is satisfied.
Assume  that $\{ R_\alpha\}_{\alpha >0}$ has the strong Feller property, and $\|T_t\|_{L^2\to L^\infty}$ is finite for any $t>0$.
In addition, we assume either of the following conditions is satisfied:
\begin{enumerate}
\item[{\rm(i)}] $\m(E)$ is finite, 
\item[{\rm(ii)}] for any $t>0$, $P_t\bone_{E}$ is continuous on $E$.
\end{enumerate}
Then, the semigroup of ${\bf X}$ has the strong Feller property. 
\end{thm}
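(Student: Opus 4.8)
The plan is to bypass the Dunford-integral machinery of Proposition~\ref{prop:rstFeller} and the sectorial $L^\infty$-estimates \eqref{eq:unkn1}--\eqref{eq:unkn2}, and instead to reduce the strong Feller property to the resolvent strong Feller property through a single algebraic identity, exploiting that analyticity on $L^2(E;\,\m)$ (Theorem~\ref{thm:anal}~b)) provides smoothing while ultracontractivity provides boundedness \emph{at one fixed positive time}. Throughout I work with $p=2$ and fix $\alpha>\alpha_0$, so that $G_\alpha=(\alpha-A)^{-1}$ and, by the preliminaries, $G_\alpha u=R_\alpha u$ $\m$-a.e.\ for every $u\in L^2(E;\,\m)\cap\mathscr{B}_b(E)$. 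By Lemma~\ref{lem:density} and Proposition~\ref{prop:density}, the resolvent strong Feller property forces the transition kernel to be absolutely continuous with respect to $\m$, a fact I shall use in the concluding step exactly as in the proof of Theorem~\ref{thm:equi}.

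The central claim is that for every $f\in L^2(E;\,\m)\cap\mathscr{B}_b(E)$ and $t>0$ the function $T_tf$ admits a bounded continuous $\m$-version. First I note $T_tf\in L^2(E;\,\m)\cap L^\infty(E;\,\m)$: membership in $L^2$ is contraction, while $\|T_tf\|_{L^\infty(E;\,\m)}\le\|T_t\|_{2\to\infty}\|f\|_{L^2(E;\,\m)}$ is precisely the ultracontractivity hypothesis. Next, analyticity gives $T_{t/2}f\in\mathrm{Dom}(A)$, hence $AT_{t/2}f\in L^2(E;\,\m)$; since $A$ commutes with $T_{t/2}$ one has $AT_tf=T_{t/2}\bigl(AT_{t/2}f\bigr)$, and applying ultracontractivity to $T_{t/2}$ shows $AT_tf\in L^2(E;\,\m)\cap L^\infty(E;\,\m)$ as well. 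Because $T_tf\in\mathrm{Dom}(A)$, the key identity is then $T_tf=G_\alpha(\alpha-A)T_tf=\alpha G_\alpha(T_tf)-G_\alpha(AT_tf)$. Choosing bounded Borel representatives $u_1,u_2\in\mathscr{B}_b(E)$ of $T_tf$ and $AT_tf$, the preliminaries give $G_\alpha u_i=R_\alpha u_i$ $\m$-a.e., and the resolvent strong Feller property makes each $R_\alpha u_i$ bounded and continuous. Therefore $T_tf=\alpha R_\alpha u_1-R_\alpha u_2$ $\m$-a.e., which is the desired bounded continuous version.

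With the central claim in hand, the remainder runs parallel to the end of the proof of Theorem~\ref{thm:equi}. Writing $\widehat{T}_tf:=\alpha R_\alpha u_1-R_\alpha u_2\in C_b(E)$, one has $P_tf=T_tf=\widehat{T}_tf$ $\m$-a.e.; for $f\in L^2(E;\,\m)\cap C_b(E)$ the absolute continuity of the transition kernel together with the right-continuity of sample paths upgrades this to $P_tf=\lim_{s\downarrow0}P_{s+t}f=\lim_{s\downarrow0}P_s(\widehat{T}_tf)=\widehat{T}_tf\in C_b(E)$, and a further application of $P_{t/2}$ (again via absolute continuity) extends the conclusion to all $f\in L^2(E;\,\m)\cap\mathscr{B}_b(E)$. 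Finally, condition~(i) makes $\mathscr{B}_b(E)\subset L^2(E;\,\m)$, so nothing more is needed, whereas under condition~(ii) one passes from $L^2(E;\,\m)\cap\mathscr{B}_b(E)$ to all of $\mathscr{B}_b(E)$ by the monotone-approximation and upper/lower-semicontinuity argument of the ``in particular'' part of Theorem~\ref{thm:equi}, using $P_t\bone_E\in C_b(E)$ and a compact exhaustion of the $\sigma$-compact space $E$ to produce the approximating functions in $L^2(E;\,\m)\cap\mathscr{B}_b(E)$.

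The main obstacle, and the reason this differs from Theorems~\ref{thm:app} and~\ref{thm:equi}, is that the natural strategy of bounding $\|R(z;A)f\|_{L^\infty(E;\,\m)}$ is unavailable: as the remark preceding the theorem stresses, ultracontractivity carries \emph{no} quantitative control of $\|T_s\|_{2\to\infty}$ as $s\downarrow0$, so $G_\alpha=\int_0^\infty e^{-\alpha s}T_s\,\d s$ need not map $L^2(E;\,\m)$ into $L^\infty(E;\,\m)$ and the estimates \eqref{eq:unkn1}--\eqref{eq:unkn2} cannot be established. The device that circumvents this is to never integrate the singular kernel: the factorization $AT_t=T_{t/2}\,AT_{t/2}$ invokes ultracontractivity only at the single time $t/2$, so that $G_\alpha$ is applied \emph{solely} to the already-bounded functions $T_tf$ and $AT_tf$, for which the resolvent strong Feller property by itself delivers continuity.
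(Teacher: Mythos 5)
Your proof is correct and takes essentially the same approach as the paper's: both rest on the identity $P_tf=R_\alpha\bigl[(\alpha-A)T_tf\bigr]$, the factorization $AT_tf=T_{t/2}\bigl(AT_{t/2}f\bigr)$ so that ultracontractivity is invoked only at the single time $t/2$, the resolvent strong Feller property to get continuity, and the same concluding bootstrap under (i)/(ii). The only cosmetic difference is that you verify $(\alpha-A)T_tf\in L^\infty(E;\,\m)$ by applying $\|T_{t/2}\|_{L^2\to L^\infty}<\infty$ directly to the pieces $\alpha T_tf$ and $AT_tf$, whereas the paper obtains the same bound by duality, pairing against $L^1(E;\,\m)$ via $\|T_{t/2}^*\|_{L^1\to L^2}=\|T_{t/2}\|_{L^2\to L^\infty}$.
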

\begin{proof}
We fix $t>0$.  
By the duality, we have $\|T_t^*\|_{1 \to 2}=\|T_t\|_{2 \to \infty}.$ That is, $T_t^*$ is extended to a bounded linear operator from $L^1(E;\,\m)$ to $L^2(E;\,\m)$. Fix $f \in L^2(E;\,\m) \cap \mathscr{B}_b(E)$, and let
\begin{equation}\label{eq:hh}
h=(\alpha-A)T_t f .
\end{equation}
Here, $\alpha$ is a positive number with $\alpha>\alpha_0$. We see from \cite[Chapter~I, Exercise~1.9]{MR} that $T_tf \in \text{Dom}(A)$. Therefore, $h \in \text{Dom}(A) \subset L^2(E;\,\m)$ and 
\begin{equation}\label{eq:an1}
A T_t f=AT_{t/2}T_{t/2}f=T_{t/2}(AT_{t/2}f).
\end{equation}
 By noting that $\{e^{-\alpha_0 t }T_t\}_{ t\ge 0}$ is an analytic semigroup on $L^2(E;\,\m)$, we have from \cite[Chapter~2, Theorem~5.2~(d)]{Pa} that 
 \begin{equation}\label{eq:an2}
 \|AT_{t/2}f\|_{L^2(E;\,\m)} \le (C/t)\|f\|_{L^2(E;\,\m)}
 \end{equation}
 for some $C$ independent of $t$ and $f$. Combining \eqref{eq:an1} and \eqref{eq:an2}, we obtain that for any $g \in L^1(E;\,\m)$,

\begin{align*}
&\left|\int_{E} hg\,\d \m\right|=\left|((\alpha-A)T_tf,g)_{L^2(E;\,\m)}\right| 
=\left|((\alpha-A)T_{t/2}f,T_{t/2}^*g)_{L^2(E;\,\m)} \right|
\\
&\hspace{0.5cm}\le \left(\alpha e^{\alpha_0t/2}\|f\|_{L^2(E;\,\m)}
+\|AT_{t/2}f\|_{L^2(E;\,\m)}\right)\|T_{t/2}^*g\|_{L^2(E;\,\m)} \\
&\hspace{0.5cm}\le 
\left(\alpha e^{\alpha_0t/2}
+\frac{C}{t}\right)\|f\|_{L^2(E;\,\m)}
\|T_{t/2}^\ast g\|_{L^2(E;\,\m)} \\
&\hspace{0.5cm}\le  \left(\alpha e^{\alpha_0t/2}
+\frac{C}{t}\right)\|f\|_{L^2(E;\,\m)}\|T_{t/2}^\ast \|_{L^1 \to L^2}\|g\|_{L^1(E;\,\m)}.
 \end{align*} 
 This shows that the functional $L^1(E;\,\m) \ni g \mapsto \int_{E} hg\,\d \m$ is continuous. Therefore, we find that $h$ belongs to $L^2(E;\,\m) \cap L^\infty(E;\,\m)$, and obtain
 \begin{equation}\label{eq:bkk}
 P_{t}f=R_\alpha h.
 \end{equation}
From this and the resolvent strong Feller property, we find that $P_tf$ possesses a bounded continuous $\m$-version. Then, by following the same argument after \eqref{eq:v}, we know that $P_t$ maps any function in $ L^2(E;\m)\cap \mathscr{B}_b(E)$ to $C_b(E)$.  The rest of the proof is exactly similar to that of Theorem~\ref{thm:app}.
\end{proof}
\begin{rem}\label{rem:nsobolev}
{\rm 
\begin{itemize}
\item[(i)] Note that \eqref{eq:bkk} is obtained without the following assumptions: $E$ is locally compact, and  $\{T_t\}_{t\ge 0}$ is associated with a lower bounded semi-Dirichlet form. Consider a Markov process on a metric space with right continuous path and the resolvent strong Feller property. Then, if the semigroup is extended to an $L^2$-space with respect to a suitable measure and  has the ultracontractivity, we get the  same bounded function as \eqref{eq:hh}, which leads us to \eqref{eq:bkk}. Therefore, we obtain the semigroup strong Feller property if either of the same conditions as (i) and (ii) in Theorem~\ref{thm:bkk} holds.
\item[(ii)] 
A similar equation to \eqref{eq:bkk} is also obtained in the proof of \cite[Proposition~3.4]{BKK}, where the spectral decomposition theorem is used.
\item[(iii)] Let $(\mathscr{E},\mathscr{F})$ be a lower bounded semi-Dirichlet form on $L^2(E;\,\m)$ which satisfies \eqref{eq:quc} for some positive constants $q>2$, $\alpha \geq\alpha_0$ and $S>0$. Then, for any $f \in L^2(E;\,\m)$ and $t>0$, we have $U_tf:=e^{-\alpha_0 t}T_t f \in \text{Dom}(A) \subset \mathscr{F}$. The analyticity of $\{U_t\}_{t >0}$ implies that for any $t>0$ and $f \in L^2(E;\,\m)$,
\[
\|U_t f\|_{L^q(E;\,\m)} \le S\mathscr{E}_{\alpha}(U_tf,U_t f)=-S (AU_tf, U_tf)_{L^2(E;\,\m)}  \le \frac{C}{t} \|f\|_{L^2(E;\,\m)}
\]
for some $C>0$. Then,  applying \cite[Lemma~6.1]{Ou} to $\{U_t\}_{t \ge 0}$, we find that  $\{T_t\}_{t \ge 0}$ is extended to a bounded linear operator from $L^2(E;\,\m)$ to $L^\infty(E;\,m)$.
\end{itemize}}
\end{rem}

 Next, we localize conditions in Theorems~\ref{thm:app} and \ref{thm:bkk} in an appropriate framework. Then, we introduce subprocesses of ${\bf X}$. For an open subset $U \subset E$, we set $\tau_U=\inf\{t \in [0,+\infty) \mid X_t \notin U\}$. Then,  the  subprocess of ${\bf X}$ killed upon leaving $U$ is defined by 
\[
X_t^U:=
\begin{cases}X_t,\quad &\text{if }t<\tau_U,\\
\partial,\quad&\text{if } t \ge \tau_U.
\end{cases}
\]
We see from \cite[\S3.5, Theorem~3.5.7]{Oshima}
that ${\bf X}^U=(\{X_t^U\}_{t \in [0,+\infty]},\{\bP_x\}_{x \in U})$ 
is associated with the lower bounded semi-Dirichlet form $(\mathscr{E}_U,\mathscr{F}_U)$ on $L^2(U;\,\m)$, and it 
is a Hunt process on $U$. We call ${\bf X}^U$ the \emph{part process of ${\bf X}$ on $U$.}
Here, $\mathscr{F}_U$ is identified with the completion of $\{u \in C_0(E) \cap \mathscr{F} \mid \text{supp}[u] \subset U\}$ with respect to $\mathscr{E}^{1/2}_{\alpha_{0}+1}$, and
$\mathscr{E}_U(u,v):=\mathscr{E}(u,v)$ for $u,v\in\mathscr{F}_U$. 
It is also proved in \cite[\S3.5, Theorem~3.5.7]{Oshima} that  
$(\mathscr{E}_U,\mathscr{F}_U)$ is a regular semi-Dirichlet form on $L^2(U;\,\m)$ having the same lower bound $-\alpha_0$ on $L^2(U;\,\m)$. 
Therefore, by Proposition~\ref{prop:L2semigroup},
the semigroup $\{P_t^U\}_{t \ge 0}$ and the resolvent $\{R_\alpha^U\}_{\alpha>0}$ are extended to bounded linear operators on $L^p(E;\,\m)$, $p \in [2,+\infty)$. The extensions are denote by $\{T_t^U\}_{t \ge 0}$ and $\{G_\alpha^U\}_{\alpha>0}$, respectively.  Furthermore,  
Theorem~\ref{thm:anal}
implies that $\{T_t^U\}_{t \ge 0}$ is analytic on $L^p(U;\m)$ for $p\in[2,+\infty)$. For $t,\alpha \in (0,+\infty)$, $x \in U$, $f \in \mathscr{B}_b(U)$, we have
\begin{equation}
P_t^Uf(x)=\bE_x[f(X_t):t<\tau_U],\quad
 R_\alpha^Uf(x)=\bE_x\left[\int_{0}^{\tau_U} e^{-\alpha t}f(X_t)\,\d t\right]. \label{eq:pr}
\end{equation}
Therefore, if the resolvent kernels of ${\bf X}$ is  absolutely continuous with respect to $\m$, so is the resolvent kernel of ${\bf X}^U$,
\begin{equation*}
R_\alpha^U(x,\d y)=R_\alpha^U(x,y)\,\m(\d y),\quad \alpha>0,\,x \in U.
\end{equation*}

The following theorem provides a sufficient condition for the resolvent strong Feller property of part processes of ${\bf X}$.
\begin{thm}[{\cite[Theorem~3.1]{KKT}}]\label{thm:kkt}
Assume that the resolvent of ${\bf X}$ has both the strong Feller property and the Feller property. Then,  for any open subset $U \subset E$, the resolvent of ${\bf X}^U$ has the strong Feller property.
\end{thm}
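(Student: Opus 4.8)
The plan is to reduce everything to a single hitting functional by expressing the part-process resolvent through the strong Markov property, and then to extract its continuity from the two hypotheses in tandem. Fix an open set $U \subset E$, a function $f \in \mathscr{B}_b(U)$, and let $\bar f$ denote its extension to $E$ by $0$. Splitting the occupation integral in \eqref{eq:pr} at the exit time $\tau_U$ and applying the strong Markov property at $\tau_U$, I would first establish the Dynkin--Hunt decomposition
\[
R_\alpha^U f(x) = R_\alpha \bar f(x) - \bE_x\!\left[e^{-\alpha \tau_U}\,(R_\alpha \bar f)(X_{\tau_U}) : \tau_U < \infty\right],\qquad x \in U .
\]
The first summand $R_\alpha \bar f$ is bounded and continuous on $E$ by the resolvent strong Feller property, so its restriction to $U$ is continuous. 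Everything therefore reduces to the continuity on $U$ of the $\alpha$-harmonic hitting functional $H_\alpha^U w(x) := \bE_x[e^{-\alpha \tau_U} w(X_{\tau_U}) : \tau_U < \infty]$ evaluated at $w := R_\alpha \bar f \in C_b(E)$.

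For the continuity of $H_\alpha^U w$ I would exploit the Feller hypothesis. As noted after Definition~\ref{def:Feller}, the resolvent Feller property is equivalent to the semigroup Feller property, so ${\bf X}$ is a Feller process; consequently the map $x \mapsto \bP_x$ is weakly continuous for the Skorokhod topology on the path space over $E_\partial$. Thus for $x_n \to x$ in $U$ one has $\bP_{x_n} \Rightarrow \bP_x$, and by the Portmanteau theorem it suffices to show that the path functional $\omega \mapsto e^{-\alpha \tau_U(\omega)} w(X_{\tau_U(\omega)}(\omega))$ is bounded (clear, since $w \in C_b(E)$) and $\bP_x$-almost surely continuous on path space. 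Since $\tau_U$ is lower semicontinuous in the path and is continuous at every path that leaves $U$ transversally, the continuity of $w$ together with the quasi-left-continuity of the Hunt process would then yield $H_\alpha^U w(x_n) \to H_\alpha^U w(x)$, i.e. the continuity of $H_\alpha^U w$ at $x$, completing the proof.

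The main obstacle is exactly this almost-sure continuity of the exit functional: a path may touch $\partial U$ without crossing it, or may leave $U$ by a jump landing on the boundary, and at such paths $\tau_U$ is discontinuous in the Skorokhod topology, so weak convergence alone does not transfer. Controlling the $\bP_x$-measure of this exceptional set is where the strong Feller property must enter. I would use that strong Feller forces $P_t(x,\cdot)$ to be absolutely continuous with respect to $\m$ (Lemma~\ref{lem:density}), which prevents the process from being trapped on $\m$-null portions of $\partial U$ and makes the starting point regular for the exit in the required sense; combined with the weak convergence supplied by the Feller property, this gives the convergence of expectations and hence the resolvent strong Feller property of ${\bf X}^U$. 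A technically cleaner variant, avoiding delicate path-space semicontinuity, would be to exhaust $U$ by relatively compact open sets $U_n \uparrow U$ with $\tau_{U_n} \uparrow \tau_U$, approximate $H_\alpha^U w$ by $H_\alpha^{U_n} w$, and pass to the limit using $w \in C_b(E)$ and quasi-left-continuity; there too it is the strong Feller property that supplies the uniform control needed to interchange the limit with continuity.
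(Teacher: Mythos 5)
Your opening reduction coincides with the one behind the paper's citation of \cite[Theorem~3.1]{KKT}: the Dynkin--Hunt decomposition $R_\alpha^U f = R_\alpha \bar f - \phi_U^\alpha$ with $\phi_U^\alpha(x)=\bE_x\bigl[e^{-\alpha\tau_U}R_\alpha\bar f(X_{\tau_U}):\tau_U<\infty\bigr]$, and the continuity of $R_\alpha\bar f$ from the resolvent strong Feller property. The divergence, and the genuine gap, is in how you treat $\phi_U^\alpha$. The continuous-mapping argument requires exactly what you concede you cannot prove: that for each $x\in U$ the functional $\omega\mapsto e^{-\alpha\tau_U(\omega)}w\bigl(X_{\tau_U(\omega)}(\omega)\bigr)$ is $\bP_x$-a.s.\ continuous in the Skorokhod topology. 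Your proposed repair via Lemma~\ref{lem:density} does not close this hole: absolute continuity of the fixed-time marginals $P_t(x,\cdot)$ constrains neither the joint law of $(\tau_U,X_{\tau_U})$ nor the behaviour of paths that graze or jump onto $\partial U$. Note also that $\partial U$ need not be $\m$-null, so the heuristic ``not trapped on $\m$-null portions of $\partial U$'' has no content: for a Hunt process with jumps satisfying both hypotheses (e.g.\ a stable process) and an open $U$ with $\m(\partial U)>0$, the process exits by landing exactly on $\partial U$ with positive probability, and at such paths $\tau_U$ is genuinely discontinuous as a path functional. Since the theorem is asserted for \emph{every} Hunt process and \emph{every} open $U$, the approach cannot be completed in the stated generality. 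Your closing ``cleaner variant'' ($U_n\uparrow U$) is circular: each $H_\alpha^{U_n}w$ is a hitting functional of the same type, so its continuity is the very problem being solved.

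The proof the theorem rests on (reproduced in mechanism in the paper's proof of Proposition~\ref{prop:harm}) is purely resolvent-theoretic and avoids path-space topology entirely. Since $\phi_U^\alpha\in\mathscr{B}_b(E)$, each $nR_{n+\alpha}\phi_U^\alpha$ is bounded continuous by the resolvent strong Feller property of ${\bf X}$ itself. A Markov-property computation yields
\begin{equation*}
\left|\phi_U^\alpha(x)-nR_{n+\alpha}\phi_U^\alpha(x)\right|\le \frac{2\|f\|_\infty}{\alpha}\,\bE_x\!\left[e^{-n\tau_U}\right],\quad x\in U,
\end{equation*}
and the Feller property enters only to guarantee $\lim_{n\to\infty}\sup_{x\in K}\bE_x[e^{-n\tau_U}]=0$ for every compact $K\subset U$ (equivalently \eqref{eq:wf}; this is \cite[Lemma~2.2]{KKT}). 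Hence $\phi_U^\alpha$ is a locally uniform limit of continuous functions on $U$, and therefore continuous there. If you keep your first step, this resolvent approximation is the piece to substitute for the weak-convergence argument; it uses the two hypotheses in exactly the division of labour you were aiming for, but without ever touching almost-sure continuity of exit functionals.
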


Let $U$ be an open subset of $E$. A bounded Borel measurable function $h\colon U \to \R$ is said to be \emph{harmonic} (with respect to ${\bf X}$) if for any relatively compact open set $V \subset U$, $\{h(X_{t \wedge \tau_V})\}_{t \ge 0}$ is a uniformly integrable martingale with respect to $\bP_x$, $x \in V$.  A 
Hunt process ${\bf X}$ is said to be a \emph{diffusion process without killing inside} if ${\bf X}$ is of continuous sample paths and $\bP_x(X_{\zeta-} \in E,\,\zeta<+\infty)=0$ for every $x \in E$. Here $\zeta$ denotes the lifetime of ${\bf X}$. With these definitions, we give another sufficient condition for the resolvent strong Feller property for part processes of ${\bf X}$.

\begin{prop}\label{prop:harm}
Suppose that ${\bf X}$ is a diffusion process without killing inside, and the resolvent has the strong Feller property. In addition, we assume that any bounded harmonic function $h$ on an open subset $U \subset E$ 
is continuous there.
Then, the resolvent of ${\bf X}^U$ has the strong Feller property.
\end{prop}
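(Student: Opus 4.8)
The plan is to follow the strategy behind Theorem~\ref{thm:kkt}, but since the Feller property is \emph{not} assumed here, I would replace the step where it is used by an argument based on the continuity of harmonic functions. Fix an open set $U\subset E$, a number $\alpha>0$, and $f\in\mathscr{B}_b(U)$, and extend $f$ by zero to all of $E$. Splitting the resolvent of ${\bf X}$ at the first exit time $\tau_U$ and applying the strong Markov property at $\tau_U$ gives, for $x\in U$,
\[
R_\alpha f(x)=R_\alpha^U f(x)+\bE_x[\,e^{-\alpha\tau_U}R_\alpha f(X_{\tau_U});\ \tau_U<\infty\,].
\]
Writing $g:=R_\alpha f$, which lies in $C_b(E)$ by the resolvent strong Feller property of ${\bf X}$, this reads $R_\alpha^U f=g-u$ on $U$, where $u(x):=\bE_x[\,e^{-\alpha\tau_U}g(X_{\tau_U});\ \tau_U<\infty\,]$. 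Since $g$ is continuous on $E$, it suffices to prove that $u$ is continuous on $U$.

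Because ${\bf X}$ is a diffusion without killing inside, its paths are continuous and $X_{\tau_U}\in\partial U\subset E$ on $\{\tau_U<\infty\}$, so $u$ depends only on the boundary data $g|_{\partial U}$ and is the natural solution of the discounted Dirichlet problem in $U$. Applying the strong Markov property at $\tau_V$ for each relatively compact open $V$ with $\overline{V}\subset U$ shows that $u$ is bounded and $\alpha$-\emph{harmonic}, i.e. $u(x)=\bE_x[\,e^{-\alpha\tau_V}u(X_{\tau_V});\ \tau_V<\infty\,]$ for all such $V$ and $x\in V$. The task is therefore to deduce the continuity of this bounded $\alpha$-harmonic function from the hypothesis that bounded $(0$-$)$harmonic functions are continuous on every open set.

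To pass from $\alpha$-harmonicity to ordinary harmonicity I would introduce, for each such $V$, the companion function $\tilde u(x):=\bE_x[\,u(X_{\tau_V});\ \tau_V<\infty\,]$. By the strong Markov property $\tilde u$ is bounded and harmonic on $V$, hence continuous there by assumption. Writing $1-e^{-\alpha\tau_V}=\alpha\int_0^{\tau_V}e^{-\alpha s}\,\d s$ and using the Markov property at time $s$ yields the Duhamel-type identity $\tilde u-u=\alpha R_\alpha^V\tilde u$ on $V$, that is $u=\tilde u-\alpha R_\alpha^V\tilde u$. Thus the continuity of $u$ on $V$ is reduced to the continuity of the part-process potential $R_\alpha^V\tilde u$ of the continuous function $\tilde u$.

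The main obstacle is precisely this last point. Establishing continuity of $R_\alpha^V$ on continuous data is a $C_b$-Feller statement for the part process, and the exit decomposition applied to $R_\alpha^V\tilde u$ reproduces a term of the same $\alpha$-harmonic form, so the reduction is self-referential and cannot be closed by the resolvent identities alone. I would break the circularity by localizing to $V$ small enough that $\alpha\sup_{x\in V}\bE_x[\tau_V]<1$ and expanding $R_\alpha^V$ in the uniformly convergent Neumann series $\sum_{n\ge0}(-\alpha)^n (R_0^V)^{n+1}$ built from the $0$-order Green operator $R_0^V$; it then suffices that $R_0^V$ map bounded functions to continuous ones, which I would extract from the strong Feller property of $\{R_\alpha\}_{\alpha>0}$ together with the continuity of the sample paths and the boundary regularity furnished by the harmonicity hypothesis. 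Continuity of $u$ on each small $V$, hence on $U$, then follows, and substituting back gives $R_\alpha^U f\in C_b(U)$, i.e. the resolvent strong Feller property of ${\bf X}^U$. The delicate points to monitor are the almost-sure finiteness and integrability of $\tau_V$ on relatively compact $V$ (where the no-killing-inside hypothesis and relative compactness enter) and the regularity ensuring that $\tilde u$ and $u$ carry the same boundary data on $\partial U$.
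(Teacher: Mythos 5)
Your opening decomposition is exactly the paper's: writing $R_\alpha f = R_\alpha^U f + u$ on $U$ with $u(x)=\bE_x[e^{-\alpha\tau_U}R_\alpha f(X_{\tau_U});\,\tau_U<\infty]$ (the paper's $\phi_U^\alpha=\bE_{{}_\bullet}[\int_{\tau_U}^\infty e^{-\alpha t}f(X_t)\,\d t]$ is the same function by the strong Markov property), so the claim reduces to continuity of $u$ on $U$. From there, however, your argument has a genuine gap which you flag yourself but do not actually close. The Duhamel identity $u=\tilde u-\alpha R_\alpha^V\tilde u$ is correct, but it trades continuity of $u$ for continuity of $R_\alpha^V\tilde u$, and the Neumann series $R_\alpha^V=\sum_{n\ge0}(-\alpha)^n(R_0^V)^{n+1}$ merely trades that for the statement that the Green operator $R_0^V$ maps bounded Borel functions to continuous ones. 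That statement is the strong Feller property of the part process ${\bf X}^V$ at $\alpha=0$, i.e.\ a statement of exactly the same nature as (indeed at least as strong as) the conclusion of Proposition~\ref{prop:harm} with $V$ in place of $U$. Asserting that it can be ``extracted from'' the resolvent strong Feller property of ${\bf X}$, path continuity and the harmonicity hypothesis is a restatement of the proposition, not a proof; the circularity is renamed, not broken. A secondary problem: the localization $\alpha\sup_{x\in V}\bE_x[\tau_V]<1$ needed for your Neumann series is not justified by the hypotheses, which give no quantitative control on exit times.

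The idea you are missing --- and the way the paper closes precisely this loop --- is to approximate $u$ through the resolvent of the \emph{full} process, which is strong Feller by hypothesis, so that no part-process operator is ever needed. Following the computation of \cite[Theorem~3.1]{KKT}, one has for any compact $K\subset U$
\[
\sup_{x\in K}\bigl|u(x)-nR_{n+\alpha}u(x)\bigr|\;\le\;2\,\sup_{x\in U}|f(x)|\,\sup_{x\in K}\psi_U^n(x),
\qquad \psi_U^n(x):=\bE_x\bigl[e^{-n\tau_U}\bigr],
\]
and each $nR_{n+\alpha}u$ is continuous because $u$ is bounded and $\{R_\alpha\}_{\alpha>0}$ is strong Feller. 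So everything hinges on showing $\sup_K\psi_U^n\to0$, and this is where the diffusion and harmonicity hypotheses enter: taking $V$ open with $K\subset V\subset\overline V\subset U$, the absence of killing inside together with path continuity gives $\tau_V<\tau_U$ a.s.\ on $V$, whence $\psi_U^n\le h_n:=\bE_{{}_\bullet}\bigl[\bE_{X_{\tau_V}}[e^{-n\tau_U}]\bigr]$ on $V$; each $h_n$ is a bounded harmonic function on $V$, hence continuous by assumption, and $h_n\downarrow0$ pointwise since $\tau_U>0$ a.s.\ from every point of $\partial V\subset U$, so Dini's theorem gives $\sup_K h_n\to0$. In short, harmonicity should be applied not to $u$ itself (your $\tilde u$), but to harmonic majorants of $\bE_{{}_\bullet}[e^{-n\tau_U}]$, which is exactly what lets the approximation run entirely through the full-process resolvent.
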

\begin{proof}
We fix $\alpha>0$, $f \in \mathscr{B}_b(U)$, and set $f=0$ on $E\setminus U$. In view of \eqref{eq:pr}, it suffices to show that  $\phi_U^\alpha:=\bE_{{}_{\bullet}}[\int_{\tau_U}^\infty e^{-\alpha t}f(X_t)\,\d t ]$ is continuous on $K$  for any compact subset $K \subset U$. 

We fix a compact subset $K \subset U$, and let $V$ be a relatively compact open subset of $U$ such that $K \subset V \subset \overline{V} \subset U$. Here we denote by $\overline{V}$ the closure of $V$ in $E$. For $n \in \N$ and $x \in U$, we define \[
\psi_U^n(x)=\bE_x \left[e^{-n \tau_U} \right].\] 
The assumption ``no killing inside'' ensures that $\tau_V<\tau_U$ for $\bP_x$-a.s. $x \in V$. It follows that
$
\tau_U=\tau_V+\tau_U \circ \theta_{\tau_V} \ge \tau_U \circ \theta_{\tau_V}$ for $\bP_x$-a.s. $x \in V.$
Here, $\{\theta_t\}_{t \in [0,+\infty]}$ denotes the shift operators of $X$. Then, by using the strong Markov property \cite[Theorem~A.1.21]{CF} of ${\bf X}$,  we have for any $x \in V$ and $n \in \N$,
\begin{equation}\label{eq:h}
\psi_U^n(x)=\bE_x \left[e^{-n \tau_U}:\tau_V<\tau_U \right]
\le \bE_x\left[\bE_{X_{\tau_V}}\left[e^{-n \tau_U}\right]\right]=:h_n (x).
\end{equation}
We see from the strong Markov property and the same argument as in the proof of \cite[Lemma~6.1.5]{CF} that $h_n$ is a harmonic function on $V$ with respect to ${\bf X}$. Noting this fact and the assumption that $h_n$ is bounded continuous on $K$, we use Dini's theorem to obtain that $\lim_{n \to \infty}\sup_{x \in K}h_n(x)=0$. In particular, we have from \eqref{eq:h} that
\begin{equation}\label{eq:h2}
\lim_{n \to \infty}\sup_{x \in K}\psi_U^n(x)=0.
\end{equation} 
The same argument as in \cite[Theorem~3.1]{KKT} and \eqref{eq:h2} imply
\[
\lim_{n \to \infty }\sup_{x \in K}\left| \phi_U^\alpha-nR_{n+\alpha}\phi_U^{\alpha}(x)\right| \le 2 \sup_{x \in U}|f(x)|  \times \lim_{n \to \infty }\sup_{x \in K}\psi_U^n(x) =0.
\]
Finally, by using the strong Feller property, we see $\phi_U^\alpha$ is continuous on $U.$
\end{proof}

\begin{rem}
{\rm By assuming both the Feller and the strong Feller property of the resolvent, we have the continuity of bounded harmonic functions. The proof is almost the same as \cite[Theorem~3.4]{SW}.
}
\end{rem}

 Let $U$ be an open subset of $E$. Then,  under both the situations of Theorem~\ref{thm:kkt} and Proposition~\ref{prop:harm}, we see that for any compact subset $K \subset U$,
\begin{equation}\label{eq:wf}
\lim_{s \to 0}\sup_{x \in K}\bP_x[\tau_U \le s]=0.
\end{equation}
In fact, under the situation of Theorem~\ref{thm:kkt},  we use \cite[Lemma~2.2]{KKT} to see that \eqref{eq:wf} is valid. For the latter situation, we take an open subset $V \subset E$ such that $K \subset V \subset \overline{V} \subset U$.  
It then holds that
$
\tau_U \ge \tau_U \circ \theta_{\tau_V},$ $\bP_x$-a.s. $x \in V.$ This and  the strong Markov property \cite[Theorem~A.1.21]{CF} of ${\bf X}$ together imply
\[
\bP_x[\tau_U \le s] \le \bE_x\left[\bP_{X_{\tau_V}}[\tau_U \le s] \right]=:u_s(x),\quad x \in V,\,s>0.
\]
Since $u_s$ is harmonic on $V$ with respect to ${\bf X}$, it is continuous on $V$. Then, it is straightforward to see that $\lim_{s \to 0}\sup_{x \in K}\bP_x[\tau_U \le s] \le \lim_{s \to 0}\sup_{x \in K}u_s(x)=0.$

In what follows, we say that ${\bf X}$ has the {\it local ultracontracitivity} if either of the following conditions is satisfied: 
\begin{itemize}
\item[(a)] for any relatively compact open set $U \subset E$, there exist $C>0$, $\alpha>\alpha_0$, and $p>2$ such that for any $g\in L^p(U;\,\m)\cap L^2(U;\,\m)$,
\begin{align}
\hspace{-1.5cm}\| G_\alpha^U g \|_{L^\infty(U;\,\m)} \leq C(\| g\| _{L^p(U;\,\m)}+
\| g\| _{L^2(U;\,\m)}).\label{eq:UCRW}
\end{align}
\item[(b)]  for any relatively compact open set $U \subset E$ and $t>0$, $\|T_t^U\|_{L^2(U;\,\m) \to L^\infty(U;\,\m)}$ is finite.
\end{itemize}
The condition \eqref{eq:UCRW} is weaker than $\|G_{\alpha}^U\|_{L^2(U;\,\m) \to L^\infty(U;\,\m)}<\infty$.
The localized version of Theorems~\ref{thm:app} and \ref{thm:bkk} are as follows:
\begin{thm}\label{thm:loc}
Assume that the resolvent of ${\bf X}$ has the strong Feller property, and the local ultracontractivity. In addition, we assume either of the following conditions is satisfied:
\begin{enumerate}
\item[{\rm(i)}] the resolvent of ${\bf X}$ has the Feller property,
\item[{\rm(ii)}] ${\bf X}$ is a diffusion process without killing inside, and for any relatively compact open subset $U \subset E$, any bounded harmonic function on $U$ is continuous there.
\end{enumerate}
Then, the semigroup of ${\bf X}$ has the strong Feller property.
\end{thm}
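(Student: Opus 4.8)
The plan is to reduce the assertion to the results already obtained for the part processes $\mathbf{X}^U$ on relatively compact open sets, and then to transfer the strong Feller property of the part semigroups back to $\{P_t\}_{t\ge0}$ by a short-time decomposition whose error is controlled by \eqref{eq:wf}. Throughout, fix $f\in\mathscr{B}_b(E)$ and $t>0$; since $\|P_tf\|_{\infty}\le\|f\|_{\infty}$, it suffices to prove that $P_tf$ is continuous at an arbitrary point $x_0\in E$.

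First I would set up the part process. Choose a relatively compact open set $U\subset E$ with $x_0\in U$. As recalled before \eqref{eq:pr}, $\mathbf{X}^U$ is a Hunt process associated with the regular lower bounded semi-Dirichlet form $(\mathscr{E}_U,\mathscr{F}_U)$ on $L^2(U;\,\m)$ with the same lower bound $-\alpha_0$, so $\mathbf{X}^U$ satisfies $\mathbf{(A2)}$; moreover $\m(U)<\infty$ because $U$ is relatively compact and $\m$ is Radon, so condition~(i) of Theorems~\ref{thm:app} and \ref{thm:bkk} holds automatically for $\mathbf{X}^U$. Next, the resolvent of $\mathbf{X}^U$ has the strong Feller property: under condition~(i) of the present theorem this is Theorem~\ref{thm:kkt}, and under condition~(ii) it is Proposition~\ref{prop:harm}. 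Finally, the local ultracontractivity furnishes the missing hypothesis: form~(a) is precisely \eqref{eq:UCRW}, which is \eqref{eq:thmsym10} for $\mathbf{X}^U$ and lets me apply Theorem~\ref{thm:app} to $\mathbf{X}^U$, whereas form~(b) is finiteness of $\|T_s^U\|_{L^2(U;\,\m)\to L^\infty(U;\,\m)}$ and lets me apply Theorem~\ref{thm:bkk}. In either case I conclude that the semigroup $\{P_s^U\}_{s>0}$ of $\mathbf{X}^U$ is strong Feller, i.e.\ $P_s^U$ maps $\mathscr{B}_b(U)$ into $C_b(U)$ for every $s>0$.

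The transfer step is the heart of the argument. Fix a relatively compact open $V$ with $x_0\in V\subset\overline{V}\subset U$ and put $K:=\overline{V}$. For $0<s<t$ and $x\in U$ I would split
\begin{equation*}
P_tf(x)=\bE_x[f(X_t);\,\tau_U>s]+\bE_x[f(X_t);\,\tau_U\le s].
\end{equation*}
Since $\{\tau_U>s\}$ is $\mathcal{F}_s$-measurable and $X_s\in U$ on this event, the Markov property rewrites the first term as $\bE_x[\bone_{\{\tau_U>s\}}(P_{t-s}f)(X_s)]=P_s^U(P_{t-s}f)(x)$, where $P_{t-s}f\in\mathscr{B}_b(E)$ restricts to an element of $\mathscr{B}_b(U)$ since $\|P_{t-s}f\|_{\infty}\le\|f\|_{\infty}$. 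By the previous step this term is continuous on $U$, hence uniformly continuous on the compact set $K$, while the second term is bounded in modulus by $\|f\|_{\infty}\,\bP_x[\tau_U\le s]$. Combining these for $x,y\in K$ gives
\begin{align*}
|P_tf(x)-P_tf(y)|&\le\big|P_s^U(P_{t-s}f)(x)-P_s^U(P_{t-s}f)(y)\big|\\
&\quad+2\|f\|_{\infty}\sup_{z\in K}\bP_z[\tau_U\le s].
\end{align*}
Given $\varepsilon>0$, I would first invoke \eqref{eq:wf}---valid on $K$ under either hypothesis~(i) or~(ii), as explained after \eqref{eq:wf}---to fix $s\in(0,t)$ so small that the last term is $<\varepsilon/2$, and then use the uniform continuity of $P_s^U(P_{t-s}f)$ on $K$ to make the first term $<\varepsilon/2$ for $x,y$ sufficiently close. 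Thus $P_tf$ is uniformly continuous on $K$, in particular continuous at $x_0$; as $x_0$ was arbitrary, $P_tf\in C_b(E)$.

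I expect the main obstacle to be this transfer step, and specifically the bookkeeping that makes the decomposition exact. The crucial points are that $\{\tau_U>s\}\in\mathcal{F}_s$, so the Markov property turns $\bE_x[f(X_t);\,\tau_U>s]$ into $P_s^U$ applied to the \emph{full} semigroup image $P_{t-s}f$ (rather than to a part-process image), and that only the short-time exit probability $\bP_x[\tau_U\le s]$ enters the error term---which is exactly the quantity that \eqref{eq:wf} drives to zero uniformly over $K$. Everything else is a routine application of the already-proved global results to the part process.
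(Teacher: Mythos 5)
Your proposal is correct and follows essentially the same route as the paper's proof: obtain the resolvent strong Feller property of the part process $\mathbf{X}^U$ via Theorem~\ref{thm:kkt} or Proposition~\ref{prop:harm}, upgrade it to the semigroup strong Feller property via Theorem~\ref{thm:app} or Theorem~\ref{thm:bkk} (using that $\m(U)<\infty$ for relatively compact $U$), and then transfer to $\{P_t\}_{t\ge0}$ through the approximation $P_tf\approx P_s^U P_{t-s}f$ with error $\|f\|_{\infty}\sup_{x\in K}\bP_x[\tau_U\le s]$, which \eqref{eq:wf} sends to zero. The only difference is presentational: you spell out the Markov-property bookkeeping and the $\varepsilon/2$ argument that the paper compresses into a single displayed limit.
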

\begin{proof}
Let $U$ be a relatively compact open subset of $E$. From Theorem~\ref{thm:kkt} and Proposition~\ref{prop:harm}, it follows that the resolvent of ${\bf X}^U$ has the strong Feller property. Furthermore, Theorems~\ref{thm:app} and \ref{thm:bkk}, and the local ultracontractivity together imply that the semigroup of ${\bf X}^U$ has the strong Feller property. Let $K$ be a compact subset of $U$. We obtain from \eqref{eq:wf} that for any $t>0$ and $f \in \mathscr{B}_b(E)$, 
\[
\lim_{s \to 0}\sup_{x \in K}\left|P_tf(x)-P_s^UP_{t-s}f(x)\right|\le \sup_{x \in E}|f(x)|\times  \lim_{s \to 0}\sup_{x \in K}\bP_x[\tau_U \le s]=0.
\]
Thus, $P_tf$ is a continuous function on $K$. Because $K$ and $U$ are arbitrarily chosen, we see $P_tf$ is  continuous on $E$.
\end{proof}

In the proof of Theorem~\ref{thm:loc}, after establishing the semigroup strong Feller property of the part process, \eqref{eq:wf} is used. However, this can be replaced by the condition that $P_t \bone_{E}$ is continuous on $E$ for any $t>0$.  To clarify this fact, and for future reference, we prove the following lemma.

\begin{lem}
The semigroup of ${\bf X}$ has the strong Feller property if the following conditions are satisfied:
\begin{itemize}
\item for any relatively compact open subset $U \subset E$, the semigroup of ${\bf X}^U$ is strong Feller.
\item for any $t>0$, $P_t \bone_E$ is continuous on $E$.
\end{itemize}
\end{lem}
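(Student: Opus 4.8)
The plan is to localise via the part processes $\mathbf X^U$ and to run the same semicontinuity argument as in the ``in particular'' part of the proof of Theorem~\ref{thm:equi}, with the family $\{P_t^U\}$ playing the role that the continuous approximants $P_tf_n$ played there. By decomposing a general $f\in\mathscr{B}_b(E)$ as $f=f_+-f_-$ it suffices to treat a nonnegative $f\in\mathscr{B}_b(E)$. Throughout I extend $f$ by $f(\partial)=0$, so that $P_tf(x)=\bE_x[f(X_t);\,t<\zeta]$ for $x\in E$, where $\zeta$ denotes the lifetime of ${\bf X}$.

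First I would fix $x_0\in E$ and $t>0$, choose a relatively compact open neighbourhood $U_1\ni x_0$, and take an increasing sequence $\{U_n\}$ of relatively compact open sets with $U_1\subset U_n$ and $\bigcup_nU_n=E$. By \eqref{eq:pr} we have, for every $x\in U_1$,
\[
P_t^{U_n}f(x)=\bE_x\big[f(X_t);\,t<\tau_{U_n}\big],
\]
and the strong Feller assumption on the part process ${\bf X}^{U_n}$ gives that $P_t^{U_n}f$ is continuous on $U_n$, hence on the neighbourhood $U_1$ of $x_0$.

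The crucial step is the monotone convergence $P_t^{U_n}f\uparrow P_tf$ on $U_1$. Since $\tau_{U_n}$ increases and $\tau_{U_n}\le\tau_E=\zeta$, the difference equals
\[
P_tf(x)-P_t^{U_n}f(x)=\bE_x\big[f(X_t);\,\tau_{U_n}\le t<\zeta\big],
\]
which tends to $0$ precisely because $\sup_n\tau_{U_n}=\zeta$ $\bP_x$-a.s.\ --- the standard exhaustion property of a Hunt process, whereby the path leaves every relatively compact $U_n$ only as $t\uparrow\zeta$; this is available from the quasi-left-continuity and the existence of left limits in $E_\partial$ recorded in \cite[Definition~A.1.23]{CF}. \textbf{This exit-time convergence is the main obstacle}, since it is where the nonexplosion-up-to-$\zeta$ behaviour of the process enters, and it is exactly the ingredient that replaces the uniform estimate \eqref{eq:wf} used in the proof of Theorem~\ref{thm:loc}. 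Granting it, on $U_1$ the function $P_tf$ is an increasing limit of functions continuous on $U_1$, hence lower semicontinuous at $x_0$; as $x_0$ is arbitrary, $P_tf$ is lower semicontinuous on $E$.

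Finally I would upgrade lower semicontinuity to continuity exactly as in Theorem~\ref{thm:equi}. The function $g:=\|f\|_\infty\bone_E-f$ is again nonnegative and bounded Borel, so the previous step shows that $P_tg$ is lower semicontinuous; since $P_tg=\|f\|_\infty P_t\bone_E-P_tf$ and $P_t\bone_E$ is continuous by hypothesis, $P_tf$ is upper semicontinuous, and therefore continuous. It is bounded because $|P_tf|\le\|f\|_\infty P_t\bone_E\le\|f\|_\infty$, so $P_tf\in C_b(E)$. Returning to a general $f\in\mathscr{B}_b(E)$ through $f=f_+-f_-$ then yields the strong Feller property of $\{P_t\}_{t\ge0}$.
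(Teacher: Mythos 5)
Your proof is correct, but it takes a genuinely different route from the paper's. The paper follows \cite[Theorem~1.4]{ChKu}: it fixes a compact set $K$, an exhausting sequence $U_n$ with $\overline{U}_n\subset U_{n+1}$, notes that both hypotheses make $x\mapsto \bP_x[\tau_{U_n}\le t<\zeta]=P_t\bone_E(x)-P_t^{U_n}\bone_E(x)$ continuous on $K$, and then runs a Dini-type argument (choose maximizers $x_n\in K$, extract a convergent subsequence, compare $\tau_{U_n}$ with $\tau_{U_m}$ for $m<n$) to conclude $\sup_{x\in K}\bP_x[\tau_{U_n}\le t<\zeta]\to 0$; this yields \emph{locally uniform} convergence $P_t^{U_n}f\to P_tf$ for every $f\in\mathscr{B}_b(E)$, whence continuity of $P_tf$. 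You instead settle for \emph{pointwise monotone} convergence $P_t^{U_n}f\uparrow P_tf$ for $f\ge 0$, which gives lower semicontinuity, and then upgrade to continuity via the complement trick $g=\|f\|_\infty\bone_E-f$ together with the continuity of $P_t\bone_E$ --- i.e., you recycle the semicontinuity sandwich from the ``in particular'' part of the proof of Theorem~\ref{thm:equi}, exactly as you announce. Both arguments hinge on the same exhaustion fact $\lim_n\tau_{U_n}=\zeta$ $\bP_x$-a.s., which the paper likewise derives from quasi-left-continuity, so the ingredient you flag as the ``main obstacle'' is indeed needed, and is available, in both proofs. What each buys: the paper's route produces the stronger intermediate statement of uniform convergence on compacts of the killed semigroups (of independent interest, and the essence of the doubly Feller argument of Chen--Kuwae), at the cost of a subsequence-extraction argument; your route is more elementary --- no maximizing sequences, no compactness extraction --- and isolates the role of the hypothesis on $P_t\bone_E$ in a single, transparent step. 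One cosmetic remark: your monotone-convergence step does not even require $\overline{U}_n\subset U_{n+1}$, only that the $U_n$ increase to $E$, since $E_\partial\setminus U_m$ is closed and contains $X_{\tau_{U_n}}$ for $n\ge m$; this is a minor simplification over the paper's setup.
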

\begin{proof}
We follow the argument in \cite[Theorem~1.4]{ChKu}.
Fix  $t>0$ and a compact subset $K$ of $E$. Let $\{U_n\}_{n=1}^\infty $ be a sequence of relatively compact open subsets such that  $K \subset U_1$ and  $\overline{U}_n \subset U_{n+1}$ for any $n \in \N$. The assumptions imply that for any $n \in \N$,
\[
x \mapsto {\bf P}_x[\tau_{U_n} \le t<\zeta](=P_t\bone_{E}(x)-P_t^{U_n}\bone_{E}(x))
\]
is continuous on $K$. Thus, there exists $\{x_n\}_{n=1}^\infty \subset K$ such that
\[
{\bf P}_{x_n}[\tau_{U_n} \le t<\zeta]=\sup_{x \in K}\{P_t\bone_{E}(x)-P_t^{U_n}\bone_{E}(x)\},\quad n \in \N.
\]
Because $K$ is compact, there exists a subsequence of $\{x_n\}_{n=1}^\infty$  which converges to some $x \in K.$ We denote the subsequence $\{x_n\}_{n=1}^\infty$ again. For any $n,m \in \N$ with $n>m$, 
\[
{\bf P}_{x_n}[\tau_{U_n} \le t<\zeta] \le {\bf P}_{x_n}[\tau_{U_m} \le t<\zeta].
\]
By using the continuity of the map $x \mapsto {\bf P}_{x}[\tau_{U_m} \le t<\zeta]$, we obtain that for any $m \in \N$,
\[
\varlimsup_{n \to \infty}{\bf P}_{x_n}[\tau_{U_n} \le t<\zeta] \le {\bf P}_{x}[\tau_{U_m} \le t<\zeta].\]
Thus, we arrive at
\[\varlimsup_{n \to \infty}{\bf P}_{x_n}[\tau_{U_n} \le t<\zeta] \le \lim_{m \to \infty}{\bf P}_{x}[\tau_{U_m} \le t<\zeta]=0,\]
where we use the fact that quasi-left continuity up to $\zeta$ of ${\bf X}$ implies
${\bf P}_x(\lim_{n\to\infty}\tau_{U_n}=\zeta)=1$.
This shows that for any $f \in \mathscr{B}_b(E)$ and $t>0$, 
\[
\varlimsup_{n \to \infty}\sup_{x \in K}|P_tf(x)-P_t^{U_n}f(x)|\le \|f\|_{\infty} \times \varlimsup_{n \to \infty} \sup_{x \in K}{\bf P}_{x}[\tau_{U_n} \le t<\zeta]=0.
\]
Since the semigroups of $\{{\bf X}^{U_n}\}_{n=1}^\infty$ are strong Feller, so is the semigroup of ${\bf X}$.
\end{proof}

We close this section to provide some examples.
\begin{exam}
{\rm 
Let ${\bf B}=(\{B_t\}_{t \ge 0}, \{{\bf P}_{x}\}_{x \in \R})$ be a one-dimensional Brownian motion. For $t \ge 0,$ define $A_t=\int_{0}^t (1+|B_s|^4)^{-1}\,\d s$. Then, $A=\{A_t\}_{t \ge 0}$ is a positive continuous additive functional (PCAF) of ${\bf B}$, and the Revuz measure is identified with $\nu(\d x)=(1+|x|^4)^{-1}\,\d x$ (see \cite[Section~5.1]{FOT} for the definition of Revuz measures and the correspondence with PCAFs). Here, $\d x$ denotes the one-dimensional Lebesgue measure. Let $X_t=B_{A_t^{-1}}$, $t \ge 0$ be the time changed process by 
$A_t^{-1}:=\inf\{s>0\mid A_s>t\}$. By \cite[Theorem~6.2.1]{FOT}, ${\bf X}:=(\{X_t\}_{t \ge 0}, \{{\bf P}_{x}\}_{x \in \R})$  is a $\nu$-symmetric Hunt process  on $\R$, and the Dirichlet form associated with ${\bf X}$ is regular on $L^2(\R^d;\,\nu)$. It is straightforward to see that \[\lim_{t\to 0}\sup_{x \in \R}{\bf E}_x[A_t]=0.\]
In other words, $\nu$ is of Kato class with respect to ${\bf B}.$ Then,   \cite[Lemma~4.1]{KK} shows that the resolvent of  ${\bf X}$ is strong Feller. On the other hand, since the generator of ${\bf X}$ is 
$(1+x^4) (\d^2/\d x^2)$ $(x \in \R),$ the conditions~(iii) and (iv) in \cite[Theorem~8.4.1]{P} hold, which implies that the resolvent of ${\bf X}$ is not Feller. However, ${\bf X}$ satisfies condition {\rm (b)} in Theorem~\ref{thm:loc} since any harmonic function with respect to ${\bf X}$ is harmonic with respect to ${\bf B}$. Let $U$ be a bounded open interval, and let $T_U=\inf\{t>0 \mid B_t \notin U\}$. Noting $\tau_U:=\inf\{t>0 \mid X_t \notin U\}= A_{T_U}$ and using \cite[Proposition~4.1.10]{CF}, we obtain that for any $f \in L^2(\R;\,\nu)$, $x \in U$, and $\alpha>0$,
\begin{align*}
{\bf E}_x\left[\int_{0}^{\tau_{U}} e^{-\alpha t}|f(X_t)|\,\d t\right]\le {\bf E}_x\left[\int_{0}^{A_{T_U}} |f(X_t)|\,\d t\right]&={\bf E}_x\left[\int_{0}^{T_U} |f(B_t)|\,\d A_t\right]\\
&=\int_{U}|f(y)|g_U(x,y)\,\nu(\d y).
\end{align*}
Here, $g_U(x,y)$ denotes the green function of ${\bf B}^U$. Since $\sup_{x,y \in U}g_U(x,y)<\infty$, we see that ${\bf X}$ has the local ultracontractivity. Therefore, by Theorem~\ref{thm:loc}, the semigroup of ${\bf X}$ is strong Feller.
}
\end{exam}

\begin{exam}
{\rm 
Let ${\bf X}=(\{X_t\}_{t \ge 0}, \{{\bf P}_x\}_{x \in \R^d})$ be an Ornstein--Uhlenbeck process: 
\[
X_t=e^{-t/2}x+\int_{0}^t e^{(1/2)(t-s)}\,\d B_s,\quad t\ge 0,\, x \in \R^d.\]
Here, $B=\{B_t\}_{t \ge 0}$ is a $d$-dimensional Brownian motion starting at the origin. Define a Borel measure $\mu$ on $\R^d$ by $\mu(\d x)=\exp(-|x|^2/2)\,\d x$, where $\d x$ denotes the $d$-dimensional Lebesgue measure. ${\bf X}$ is a $\mu$-symmetric Hunt process. The Dirichlet form $(\mathscr{E},\mathscr{F})$ 
associated with ${\bf X}$ is regular on $L^2(\R^d;\,\mu)$, and the core is identified with $C_0^\infty(\R^d)$, the space of smooth functions on $\R^d$ with compact support. For $u,v \in C_0^\infty(\R^d)$, we have
\[
\mathscr{E}(u,v)=\frac{1}{2}\int_{\R^d}\langle \nabla u(x), \nabla v(x)\rangle \,\mu(\d x).
\]
Hereafter, $\nabla$ denotes the standard gradient and $\langle \cdot,\cdot\rangle$ the inner product on $\R^d$. The semigroup of ${\bf X}$ has both the strong Feller property and Feller property.
However, Theorems~\ref{thm:app} and \ref{thm:bkk} are not available for ${\bf X}$. Indeed, we have
\begin{align*}
{\bf E}_x\left[|X_t|^2\right]
&={\bf E}_x\left[|X_t-e^{-t/2}x|^2\right]+2{\bf E}_x\left[
\left\langle e^{-t/2}x,\int_{0}^t e^{(1/2)(t-s)}\,\d B_s\right\rangle
\right]+e^{-t}|x|^2\\
&
\ge 0+0+e^{-t}|x|^2,\quad x\in \R^d.
\end{align*}
Hereafter, we define $|\cdot|=\langle \cdot,\cdot\rangle^{1/2}$. Although the map $\R^d \ni x \mapsto |x|^2$ belongs to $L^p(\R^d;\,\mu)$ for any $p>0$, we see
\[
\sup_{x \in \R^d}{\bf E}_x[|X_t|^2]=\infty \quad \text{ and }\quad \sup_{x \in \R^d}{\bf E}_x\left[\int_{0}^\infty e^{-\alpha t}|X_t|^2\, \d t\right]=\infty
\]
for any $t>0$ and $\alpha>0$. This implies that both $\|T_t\|_{L^2\to L^{\infty}}<\infty$ and \eqref{eq:thmsym10} fail. On the other hand, ${\bf X}$ has the local ultracontractivity. To see this, let $U$ be an open ball in $\R^d$. Then, \cite[Theorem~4.4.3~(i)]{FOT} shows that the core of the Dirichlet form $(\mathscr{E}^U,\mathscr{F}^U)$ of $X^U$ is identified with $C_0^\infty(U)(=C_0^\infty(\R^d) \cap C_0(U))$ and
$
\mathscr{E}^U(u,u)=\mathscr{E}(u,u)$, $u\in C_0^\infty(U).
$
From \cite[Theorems~11.2, 11.23, and 11.34]{Leo}, there exists $C \in (0,+\infty)$ such that
\begin{align}\label{eq:cs}
\|u\|_{L^{q}(U;\,\d x)}^2 \le C\left(\int_{U}|\nabla u(x)|^2\,\d x +\|u\|_{L^{2}(U;\,\d x)}^2  \right),\quad u \in C_0^\infty(U).
\end{align}
Here, $q$ is given by
\[
q=\begin{cases}
2d/(d-2),&\quad \text{ if $d\ge 3$},\\
\text{any number in $(2,+\infty)$},&\quad  \text{ if $d=2$},\\
\infty,&\quad  \text{ if $d=1$}.
\end{cases}
\]
By \eqref{eq:cs} and the boundedness of $U$, there exists $C>0$ such that 
\begin{align}\label{eq:cs2}
\|u\|_{L^{q}(U;\,\mu)}^2\leq 
C \left(\mathscr{E}^U(u,u)+\|u\|_{L^{2}(U;\,\mu)}^2\right),\quad u \in C_0^\infty(U).
\end{align}
Since $C_0^\infty(U)$ is a core of  $(\mathscr{E}^U,\mathscr{F}^U)$, we see that \eqref{eq:cs2} is valid for any $u \in \mathscr{F}^U.$
This and Remark~\ref{rem:sobolev}~(i) imply  that ${\bf X}$ has the local ultracontractivity.
}
\end{exam}

The following example appears in \cite[\S1.5.2, (1.5.17)]{Oshima}.
\begin{exam}
{\rm  
Let $\underline{\alpha}, \overline{\alpha}, M$, and $\delta$ be positive numbers and let $\alpha\colon \R^d \to \R$ be a continuous function such that $
(1/2)(2\overline{\alpha}-\underline{\alpha})<\delta<1$, $ \overline{\alpha}<1+\underline{\alpha}/2$, 
and 
\[
|\alpha(x)-\alpha(y)|\le M|x-y|^{\delta},\quad 0<\underline{\alpha} \le \alpha(x) \le \overline{\alpha}<2,\quad x,y \in \R^d.
\]
Let $C^2_0(\R^d)$ the space of $C^2$-functions on $\R^d$ with compact support. We define 
\[
\mathscr{L}u(x)=\int_{\R^d}\{u(x+h)-u(x)-\langle \nabla u(x),h\bone_{B_1(0)}(h)\rangle \}\,\frac{w(x)}{|h|^{d+\alpha(x)}}\,dh,\quad u \in C^2_0(\R^d).
\]
Here, $B_1(0)$ denotes the open ball centered at the origin with radius $1$, and the weight function $w\colon \R^d \to \R$ is chosen so that $\mathscr{L}e^{-{\rm i}\langle h,x\rangle}=-|h|^{\alpha(x)} e^{-{\rm i}\langle h,x\rangle }$, $x,h \in \R^d$. Then, we have \[
w(x)=2^{\alpha(x)-1}\pi^{-(d/2)-1}\Gamma((1+\alpha(x))/2)\Gamma((\alpha(x)+d)/2)\sin(\pi \alpha(x)/2),\quad x \in \R^d.\]
From \cite[\S1.5.2, (1.5.18)]{Oshima}, $(\mathscr{L},C_0^2(\R^d))$ is associated with a lower bounded semi-Dirichlet form $(\mathscr{E},\mathscr{F})$ on $L^2(\R^d;\,\d x)$, which is described as
\[
\mathscr{E}(u,v)=\lim_{n \to \infty}\iint_{|x-y|>1/n}(u(x)-u(y))v(x)\frac{w(x)}{|x-y|^{d+\alpha(x)}}\,\d x\d y,\quad u,v \in C^1_0(\R^d).
\]
Here, $C^1_0(\R^d)$ denotes the space of $C^1$-functions on $\R^d$ with compact support.  Denote by ${\bf X}=(\{X_t\}_{t \ge 0}, \{{\bf P}_x\}_{x \in \R^d})$ the Hunt process associated with $(\mathscr{E},\mathscr{F})$. For $x,y \in \R^d$, we set $j(x,y)=w(x)/|x-y|^{d+\alpha(x)}$, and suppose that there exists $C>0$ such that \[j(x,y)\ge C|x-y|^{-d-1},\quad x,y \in \R^d\text{ with } 0<|x-y|<1.\] Then, by the argument in \cite[\S~3.5, Example~3.5.5]{Oshima}  and Remark~\ref{rem:nsobolev}(iii), the semigroup $\{T_t\}_{t>0}$ of $(\mathscr{E},\mathscr{F})$ satisfies $\|T_t\|_{L^2(\R^d;\,\d x) \to L^\infty(\R^d;\, \d x)}<\infty$ for any $t>0.$ Hence, the semigroup of ${\bf X}$ is strong Feller if the resolvent is. In \cite{BK0,BK}, the Harnack inequality for bounded harmonic functions on domains with respect to non-local operators with variable orders are obtained. Thus, by using \cite[Proposition~3.1]{BK0} and the argument in \cite[Proposition~3.3]{BKK}, we can also give a sufficient condition for the resolvent of {\bf X} being H\"{o}lder continuous in the spatial variable.
}
\end{exam}

The final example is due to \cite{Be}, which is a diffusion process on an infinite-dimensional space. 

\begin{exam}
{\rm  
Let $\T=\mathbb{R}/\mathbb{Z}$ be the one-dimensional torus, and denote by $\T^\infty$ the product of countably many copies of $\T$. That is, $\T^\infty$ is the infinite-dimensional torus. $\T^\infty$ becomes a compact space by the Tychonoff's theorem. We simply denote by $\d x$ the product measure on $\T^\infty$ of the normalized Haar measure on $\T$. Let $\mathscr{A}=\{a_k\}_{k=1}^\infty$ be a sequence of strictly positive numbers, and set
\[
\mathscr{E}^{\mathscr{A}}(u,v)=\int_{\T^\infty} \sum_{k=1}^\infty a_k \frac{\partial u}{\partial x_k}(x) \frac{\partial v}{\partial x_k}(x) \,\d x,\quad u,v \in \mathscr{D}.
\]
Hereafter, $\mathscr{D}$ denotes the set of cylindrical smooth functions on $\T^\infty$. 
It is shown in \cite[Section~1]{Be} that $(\mathscr{E}^{\mathscr{A}},\mathscr{D})$ is well-defined, and closable on $L^2(\T^\infty;\,\d x)$. Let $(\mathscr{E}^{\mathscr{A}},\mathscr{F}^{\mathscr{A}})$ be the smallest closed extension. Then, $(\mathscr{E}^{\mathscr{A}},\mathscr{F}^{\mathscr{A}})$ is a regular Dirichlet form on $L^2(\T^\infty;\, \d x).$ It is straightforward to see that $(\mathscr{E}^{\mathscr{A}},\mathscr{F}^{\mathscr{A}})$ is reccurrent. In particular, it is conservative. 
Under a suitable condition, we see from \cite[Lemma~7]{Be} that the associated semigroup $\{T_t^\mathscr{A}\}_{t>0}$ on $L^2(\T^\infty;\, \d x)$ possesses an integral kernel which is continuous on $(0,+\infty) \times \T^\infty \times \T^\infty$. From this fact, the conservativeness, and the compactness of the state space, $\{T_t^\mathscr{A}\}_{t>0}$ generates a Feller process with the semigroup strong Feller property. We also see that $\{T_t^\mathscr{A}\}_{t>0}$  is ultracontractive. 
However, \cite[Theorem~6]{Be} implies that the Sobolev type inequality \eqref{eq:quc} does not hold for $(\mathscr{E}^{\mathscr{A}},\mathscr{F}^{\mathscr{A}})$.
}
\end{exam}





\begin{thebibliography}{99}

\bibitem{Az} 
{R.~Azencott},
   \emph{Behavior of diffusion semi-groups at infinity},
     Bull. Soc. Math. France,
    {\bf 102} 
     (1974), 
     193--240.


\bibitem{BK0} 
{R.~F.~Bass, 
     M.~Kassmann}, 
   \emph{ Harnack inequalities for non-local operators of variable order},
     Trans. Amer. Math. Soc.,
    {\bf 357} 
     (2005),
     837--850.



\bibitem{BK} 
{R.~F.~Bass and  
     M.~Kassmann}, 
   \emph{H\"{o}lder continuity of harmonic functions with respect to operators
   of variable order}, 
     Comm. Partial Differential Equations,
    {\bf 30} 
     (2005),
     1249--1259.


 \bibitem{BKK}
 {R.~F.~Bass, 
     M.~Kassmann and T.~Kumagai}, 
   \emph{Symmetric jump processes: localization, heat kernels and
   convergence},
     Ann. Inst. Henri Poincar\'{e} Probab. Stat.,
    {\bf 46} 
     (2010),
     59--71.

\bibitem{Be} 
{A.~D.~Bendikov},
   \emph{Remarks concerning the analysis on local Dirichlet spaces},
   Dirichlet forms and stochastic processes,
      Beijing,
        1993,
      de Gruyter, Berlin,
     1995, 55--64.


\bibitem{CKS} 
{E.~A.~Carlen,
     S.~Kusuoka and D.~W.~
     Stroock},
   \emph{Upper bounds for symmetric Markov transition functions},
     Ann. Inst. H. Poincar\'{e} Probab. Statist.,
    {\bf 23} 
     (1987),
     245--287.

	


	
	\bibitem{CF}
	{Z.-Q.~Chen and  M.~Fukushima},
   \emph{Symmetric Markov processes, time change, and boundary theory},
   London Mathematical Society Monographs Series,
    {\bf 35} 
   Princeton University Press, Princeton, NJ,
     2012.

\bibitem{ChKu} 
 {Z.-Q.~Chen and 
     K.~Kuwae,
   \emph{ On doubly Feller property},
     Osaka J. Math.,
    {\bf 46} 
     (2009),
     909--930,



\bibitem{DZ} 
{G.~Da Prato and J.~Zabczyk},
   \emph{Stochastic equations in infinite dimensions}, second edition,
   Encyclopedia of Mathematics and its Applications,
    {\bf 152} 
   Cambridge University Press, Cambridge,
     2014.

\bibitem{Da0} 
{E.~B.~Davies}, 
   \emph{ Heat kernels and spectral theory},
   Cambridge Tracts in Mathematics,
    {\bf 92} 
   Cambridge University Press, Cambridge,
     1989,

		
	
	\bibitem{Da} 
	{E.~B.~Davies}, 
   \emph{Linear operators and their spectra},
   Cambridge Studies in Advanced Mathematics,
    {\bf 106} 
   Cambridge University Press, Cambridge,
     2007.






\bibitem{DuPe} 
{N.~Dunford and 
     B.~J.~Pettis}, 
   \emph{Linear operations on summable functions},
     Trans. Amer. Math. Soc.},
    {\bf 47} 
     (1940),
     323--392. 



\bibitem{DynkinBook}
   {E.~B.~Dynkin}, 
 \emph{ Markov processes I, II}
 Grundlehren der Mathematischen Wissenschaften 
        [Fundamental Principles of Mathematical Sciences],
    {\bf 121}, {\bf 122}   
  Springer-Verlag, New York,
     1965.







\bibitem{Feller1952} 
{W.~Feller}, 
   \emph{The parabolic differential equations and the associated semi-groups of transformations},
     Ann. of Math.,
    {\bf 55} 
     (1952),   
     468--519. 

	
   \bibitem{Fuk1977}
    {M.~Fukushima},
 \emph{On an $L^p$-estimate of resolvent of Markov processes},
     Publ. Res. Inst. Math. Sci.,
    {\bf 13} 
     (1977),
     277--284.
		

\bibitem{FOT} 
    {M.~Fukushima, 
    Y.~Oshima and M.~Takeda},
   \emph{Dirichlet forms and symmetric Markov processes},
   De Gruyter Studies in Mathematics,
    {\bf 19} 
   Second revised and extended edition,
   Walter de Gruyter \& Co., Berlin, 
     2011. 


\bibitem{GirSF}
        {I.V.~Girsanov},
       \emph{On transforming a certain class of stochastic processes by absolutely continuous substitution of measures}, 
         Theor. Probab. Appl.,
          {\bf 5} 
     (1960), 
     285--301. 


\bibitem{KK} 
     {D.~Kim and K.~Kuwae}, 
   \emph{Analytic characterizations of gaugeability for generalized
   Feynman-Kac functionals},
     Trans. Amer. Math. Soc.,
    {\bf 369} 
     (2017),
     4545--4596. 

\bibitem{KKT} 
   {M.~Kurniawaty, K.~Kuwae and K.
     Tsuchida}, 
   \emph{On the doubly Feller property of resolvent},
     Kyoto J. Math.,
    {\bf 57} 
     (2017),
     637--654.




\bibitem{Leo}
{G.~Leoni},
\emph{A first course in Sobolev spaces}, Graduate Studies in Mathematics, {\bf 105} American Mathematical Society, Providence, RI, 2009.



\bibitem{Lu} 
 {A.~Lunardi}, 
   \emph{Analytic semigroups and optimal regularity in parabolic problems},
   Modern Birkh\"{a}user Classics,
   Birkh\"{a}user/Springer Basel AG, Basel,
     1995. 






\bibitem{MR} 
{Z.-M.~Ma and 
     M.~R\"{o}ckner}, 
   \emph{Introduction to the theory of (nonsymmetric) Dirichlet forms},
   Universitext, 
   Springer-Verlag, Berlin,
     1992.



\bibitem{MRPosiPre} 
{Z.-M.~Ma and 
     M.~R\"{o}ckner}, 
   \emph{Markov processes associated with positivity preserving coercive forms},
     Canad. J. Math.,
    {\bf 47} 
     (1995),
      817--840.



\bibitem{Molchanov}
      {S.~A.~Molchanov},
     \emph{Strong Feller property of diffusion processes on smooth manifolds},
       Theor. Probab. Appl.,
      {\bf 13} 
     (1968),
      471--475. 

	
\bibitem{MoriLpKato}
	  {T.~Mori}, 
\emph{$L^p$-Kato class measures for symmetric Markov processes under heat kernel estimates},
     J. Funct. Anal.,
    {\bf 281} (2021), 
       109034. 
     Available from {\tt arXiv:2005.13758}.


	

		




\bibitem{Oshima} 
  {Y.~Oshima}, 
   \emph{Semi-Dirichlet forms and Markov processes},
   De Gruyter Studies in Mathematics,
    {\bf 48} 
   First edition,
   Walter de Gruyter \& Co., Berlin,
     2013.


\bibitem{Ou} 
 {E.~M.~Ouhabaz}, 
   \emph{Analysis of heat equations on domains},
   London Mathematical Society Monographs Series,
    {\bf 31} 
   Princeton University Press, Princeton, NJ,
     2005.


		
	\bibitem{Pa} 
{A.~Pazy}, 
   \emph{Semigroups of linear operators and applications to partial
   differential equations},
   Applied Mathematical Sciences,
    {\bf 44} 
   Springer-Verlag, New York,
     1983.

\bibitem{P} 
 {R.G.~Pinsky}, 
   \emph{Positive harmonic functions and diffusion},
   Cambridge Studies in Advanced Mathematics,
    {\bf 45} 
   Cambridge University Press, Cambridge,
     1995.


	
\bibitem{SW} 
  {R.L.~Schilling and J.~Wang}, 
   \emph{Strong Feller continuity of Feller processes and semigroups},
     Infin. Dimens. Anal. Quantum Probab. Relat. Top.,
    {\bf 15} 
     (2012).
	
\bibitem{Stein} 
 {E.~A.~Stein}, 
   \emph{Topics in harmonic analysis related to the Littlewood-Paley
   theory},
   Annals of Mathematics Studies, No. 63, 
   Princeton University Press, Princeton, N.J.; University of
   Tokyo Press, Tokyo,
     1970.


	
	
\bibitem{T} 
{Y.~Tawara}, 
   \emph{$L^p$-independence of spectral bounds of Schr\"{o}dinger-type
   operators with non-local potentials},
     J. Math. Soc. Japan,
    {\bf 62} 
     (2010),
     767--788.



\bibitem{Var} 
    {N.~T.~Varopoulos}, 
   \emph{Hardy-Littlewood theory for semigroups},
     J. Funct. Anal.,
    {\bf 62} 
     (1985),
     240--260. 



\end{thebibliography}
\end{document}